\documentclass[11pt]{article}

\usepackage[margin=1in]{geometry}
\usepackage{amsmath,amssymb,amsfonts,mathrsfs}
\usepackage{xcolor}
\usepackage[normalem]{ulem}
\usepackage[colorlinks=true,linkcolor=blue,citecolor=blue,urlcolor=blue]{hyperref}

\numberwithin{equation}{section}

\newcounter{theorem}[section]
\renewcommand{\thetheorem}{\thesection.\arabic{theorem}}
\newenvironment{theorem}[1][]{%
  \refstepcounter{theorem}\par\medskip\noindent
  \textbf{Theorem~\thetheorem\if\relax\detokenize{#1}\relax\else\ (#1)\fi.}\itshape\ }%
  {\par\medskip}
\newenvironment{proposition}[1][]{%
  \refstepcounter{theorem}\par\medskip\noindent
  \textbf{Proposition~\thetheorem\if\relax\detokenize{#1}\relax\else\ (#1)\fi.}\itshape\ }%
  {\par\medskip}
\newenvironment{lemma}[1][]{%
  \refstepcounter{theorem}\par\medskip\noindent
  \textbf{Lemma~\thetheorem\if\relax\detokenize{#1}\relax\else\ (#1)\fi.}\itshape\ }%
  {\par\medskip}
\newenvironment{corollary}[1][]{%
  \refstepcounter{theorem}\par\medskip\noindent
  \textbf{Corollary~\thetheorem\if\relax\detokenize{#1}\relax\else\ (#1)\fi.}\itshape\ }%
  {\par\medskip}
\newenvironment{remark}[1][]{%
  \refstepcounter{theorem}\par\medskip\noindent
  \textbf{Remark~\thetheorem\if\relax\detokenize{#1}\relax\else\ (#1)\fi.}\ }%
  {\par\medskip}
\newenvironment{proof}[1][Proof]{\par\medskip\noindent\textit{#1}\quad}{\par\medskip}
\newcommand{\qed}{\hfill $\Box$\par}

\allowdisplaybreaks[4]

\def\ar{\!\!\!&}
\def\mbb{\mathbb}

\def\beqlb{\begin{eqnarray}}\def\eeqlb{\end{eqnarray}}
\def\beqnn{\begin{eqnarray*}}\def\eeqnn{\end{eqnarray*}}

\renewcommand{\d}{\mathrm{d}}\newcommand{\e}{\mathrm{e}}

\title{Yaglom limits of continuous-state branching processes in Brownian random environment%
\thanks{Supported by the National Natural Science Foundation of China (NSFC) (Grants 12271029, 12301165, 12471135), NSERC (RGPIN-2021-04100) and the Young and Middle-aged Faculty Overseas Research and Training Program of East China University of Science and Technology (ECUST).}}

\author{
Pei-Sen Li\\
School of Mathematics and Statistics, Beijing Institute of Technology, Beijing 100872, China\\
\texttt{peisenli@bit.edu.cn}
\and
Xiangqi Zheng\thanks{Corresponding author.}\\
School of Mathematics, East China University of Science and Technology, Shanghai 201206, China\\
\texttt{zhengxq@ecust.edu.cn}
\and
Xiaowen Zhou\\
Department of Mathematics and Statistics, Concordia University, 1455\\ De Maisonneuve Blvd. W., Montreal, Canada\\
\texttt{xiaowen.zhou@concordia.ca}
}

\date{}

\begin{document}

\maketitle

\begin{abstract}
In this paper, we investigate the asymptotic behavior of continuous-state branching processes in a Brownian random environment (CBBRE) conditioned on non-extinction. For the subcritical case, we prove the existence of the Yaglom limit and derive an explicit representation of its Laplace transform using Kummer confluent hypergeometric functions. Notably, we demonstrate that the Yaglom limit is strictly independent of the initial state of the process across all three subcritical regimes: weakly, intermediately, and strongly subcritical.
\end{abstract}

\noindent\textbf{Keywords.} Continuous-state branching process, random environment, Yaglom limit

\medskip
\noindent\textbf{Mathematics Subject Classification.} 60J25, 60J65, 60J80

\section{Introduction}

Galton-Watson processes in random environments (GWREs for short), introduced by Smith and Wilkinson \cite{smw}, are stochastic models that describe the evolution of populations. In the classical Galton-Watson processes, the reproduction law of the population is deterministic. In contrast, GWREs incorporate a stochastic reproduction law. Let $(f, f_1,f_2,\dots)$ be a sequence of i.i.d.\ random probability generating functions on $\mathbb{N}$, which serves as a random environment. In a GWRE $\{X_n: n=0, 1, 2,\dots\}$ induced by
 $(f, f_1,f_2,\dots)$, it is assumed that, conditioned on the environment $( f_1,f_2,\dots)$, the particles reproduce independently of each other and the number of offspring in generation $n$ has probability generating function $f_n$. Then the process conditioned on  $(f_1,f_2,\dots)$ can be defined recursively as follows:
\beqnn
X_{n+1}=\sum^{X_n}_{i=1}  \xi_{n, i}, \qquad n\in \mbb{N},
\eeqnn
where $\{\xi_{n ,i}\}$ is a sequence of independent random variables such that
\beqnn
\mbb{E}[s^{\xi_{n, i}}|(f_1,f_2,\dots,f_n)]=f_n(s),\qquad\mbox{for all}~ s\in(0,1).
\eeqnn

GWREs can be classified into three categories: supercritical when $\mbb{E}[\log (f'(1))]>0$, critical when $\mbb{E}[\log (f'(1))]=0$, and subcritical when $\mbb{E}[\log (f'(1))]<0$. In the subcritical case, the population goes extinct almost surely, that is,
$$ \mbb{P}(\exists n>0: X_n=0)=1. $$
Moreover, subcritical GWREs can be further divided into three distinct regimes: strongly subcritical, intermediately subcritical, and weakly subcritical. The rate at which the survival probability $\mbb{P}(X_n>0)$ decays to $0$ as $n\to\infty$ varies significantly across these regimes; for further details, see Geiger et al.\ \cite{gkv} and Bansaye \cite{bansaye}.

A closely related problem is to study the Yaglom limit starting from $k$ particles:
$$ \lim_{n\to\infty} \mbb{P}(X_n=i \mid X_0=k, X_n>0),\quad i\ge 1. $$
In his seminal work, Bansaye \cite{bansaye} derives an implicit equation that characterizes the probability generating function of the Yaglom limit. His study further shows that, in both the intermediately and strongly subcritical regimes, the Yaglom limit does not depend on the initial value. However, for the weakly subcritical regime, the dependence of the Yaglom limit on its initial value remains an open question; see \cite[Theorem 7]{bansaye}.

Motivated by this open problem in the discrete setting, we turn our attention to continuous-state analogs, which offer greater computational tractability and serve as natural reference systems for GWREs. Historically, Kurtz \cite{kurtz} developed a diffusion approximation for GWREs through an appropriate scaling transformation, leading to branching diffusions in random environments (BDREs). Subsequently, B\"{o}inghoff and Hutzenthaler \cite{BH12} characterized the exact asymptotic behavior of the survival probability for BDREs. Building on this framework, Palau and Pardo \cite{pp2} introduced continuous-state branching processes in a Brownian random environment (CBBREs) by incorporating jump structures into the aforementioned model. (For studies of branching processes in more general environments, we also refer to He et al.\ \cite{hlx}, Li and Xu \cite{lixu}, Palau and Pardo \cite{pp}, and Ma and Zhou \cite{mz}.) In this paper, we establish an explicit analytical representation of the Yaglom limit for subcritical CBBREs. Crucially, we demonstrate that this limit is strictly independent of the initial state across all subcritical regimes, thereby providing a continuous-time resolution to this dependency question.

To achieve explicit analytical results, in this work, we will focus on CBBREs with stable branching mechanism defined by
\beqlb\label{bm}\psi(\lambda)=-\alpha\lambda+c\lambda^{\beta+1},\quad \lambda\ge 0,\eeqlb
where  $c>0,\alpha\in\mathbb R, \beta\in(0, 1].$

Let $(B^{(e)}_t)_{t\ge 0}$ and $(B^{(b)}_t)_{t\ge 0}$ be mutually independent Brownian motions. For $\beta\in(0,1)$, let $\tilde N(\d s,\d z,\d u)$ be a compensated Poisson random measure with intensity
$\frac{c\beta(\beta+1)\mathbf 1_{\{z>0\}}}{\Gamma(1-\beta)z^{2+\beta}}\d s\d z\d u.$ Let $\sigma\in(0,\infty).$ When $\beta\in (0, 1)$, the CBBRE with stable branching mechanism $\psi$ can be constructed as the unique strong solution to
\begin{equation}\label{stablesde}
 Z_t=Z_0+\alpha\int_0^tZ_s\d s+\sigma \int_0^t Z_s \d B^{(e)}_s+\int_0^t\int_0^\infty\int_0^{Z_{s-}}z\tilde N(\d s,\d z,\d u).
\end{equation}
When $\beta=1,$ our model coincides with BDRE, which can be constructed as the unique strong solution to
\begin{equation}\label{browniansde}
	Z_t=Z_0+\alpha\int_0^tZ_s\d s+\sigma \int_0^t Z_s \d B^{(e)}_s+\int_0^t\sqrt{2c Z_s} \d B^{(b)}_s.
\end{equation}

Let $ \mathbb P^z\left(Z_t \in\cdot\right)=\mathbb P\left(Z_t \in\cdot\vert Z_0=z\right)$,  and let $\mathbb E^z$ be the corresponding expectation.
Denote ${\bf m}=\alpha-\frac{\sigma^2}{2}.$  The long-term behavior of survival probability follows a three-phase classification analogous to classical branching processes:  in the supercritical case (${\bf m} > 0$), survival probability remains strictly positive; in the critical case (${\bf m} = 0$), survival probability decays to zero at a polynomial rate;  while in the subcritical case (${\bf m} < 0$), survival probability exhibits exponential decay to zero. Moreover, \cite[Theorem 3]{pp2} establishes a finer subdivision of the subcritical case into three distinct regimes:
\begin{itemize}
\item if  ${\bf m}\in(-\sigma^2,0)$, then the process is called {\it weakly subcritical}, and
\beqnn\lim _{t \rightarrow \infty} {t}^{3/2} \e^{\frac{{\bf m}^2}{2 \sigma^2} t} \mathbb{P}^z\left(Z_t>0\right)  = \frac{8}{\beta^3\sigma^3} \int_0^{\infty}(1-\e^{-z(\frac{\beta\sigma^2}{2c}a)^{1/\beta}})\phi(a) \d a>0;\eeqnn
\item  if ${\bf m}=-\sigma^2$, then the process is called {\it intermediately subcritical}, and
\beqnn\lim _{t \rightarrow \infty} \sqrt{t} \e^{\frac{\sigma^2}{2} t} \mathbb{P}^z\left(Z_t>0\right)  = z \frac{\sqrt{2}}{\sqrt{\pi} \beta\sigma}\Gamma\left(\frac{1}{\beta}\right)\left(\frac{\beta\sigma^2}{2c}\right)^{1/\beta}>0;\eeqnn
\item if ${\bf m}<-\sigma^2$, then the process is called {\it strongly subcritical}, and
\beqnn\lim _{t \rightarrow \infty} \e^{-\left({\bf m}+\frac{\sigma^2}{2}\right) t} \mathbb{P}^z\left(Z_t>0\right)  = z\left(\frac{\beta\sigma^2}{2c}\right)^{1/\beta}\frac{\Gamma\left(\eta-\frac{1}{\beta}\right)}{\Gamma\left(\eta-\frac{2}{\beta}\right)}>0.\eeqnn
\end{itemize}
Here, $\eta=-\frac{2 {\bf m}}{\beta\sigma^2}$ and the function $\phi:(0, \infty) \rightarrow(0, \infty)$ is defined as
\begin{equation*}
\phi(a)=\int_0^{\infty} \int_0^{\infty} \frac{1}{\sqrt{2} \pi} \Gamma\left(\frac{\eta+2}{2}\right) \e^{-a} a^{-\eta / 2} u^{(\eta-1) / 2} \e^{-u} \frac{\sinh (\xi) \cosh (\xi) \xi}{\left(u+a(\cosh (\xi))^2\right)^{\frac{\eta+2}{2}}} \d \xi \d u.
\end{equation*}

The Yaglom limit of the CBBRE with initial value $z>0$ is defined by
\beqlb\label{yaglom}
\lim_{t\to\infty} \mbb{P}^z(Z_t\in\cdot|Z_t>0)
\eeqlb
In the sense of weak convergence of probability measures.  Define
\begin{equation}\label{yglmt}
\mathcal L_t(z,\lambda)=\mathbb E^z\left[\e^{-\lambda Z_t} \mid Z_t>0\right]=1-\frac{\mathbb E^z\left[1-\e^{-\lambda Z_t} \right]}{\mathbb P^z(Z_t>0)},\quad z>0,\,\,\lambda>0.\end{equation}
Then the associated Laplace transform of the Yaglom limit  is given by
\begin{equation*}
\mathcal L(z,\lambda)=\lim _{t \rightarrow \infty}\mathcal L_t(z,\lambda).\end{equation*}

We now present the main result of this paper.
\begin{theorem}\label{mainthm}
For any subcritical CBBRE with branching mechanism defined by \eqref{bm}, the Yaglom limit exists and is independent of the initial value $z$. Moreover, its Laplace transform is given by {\small\beqlb\label{mainthm0327}
\mathcal L(z,\lambda)=1-\left(\frac{\gamma}{2}\right)^{\frac{1}{\beta}\left(\Theta-\frac{2{\bf m}}{\sigma^2}\right)}U\left(\frac{1}{\beta}\left(\Theta-\frac{2{\bf m}}{\sigma^2}\right),1+\frac2\beta\left(\Theta-\frac{{\bf m}}{\sigma^2}\right),\frac{\gamma\lambda^{\beta}}{2}\right)\lambda^{\Theta-\frac{2{\bf m}}{\sigma^2}},\quad \lambda, z>0,
    \eeqlb}
	where $\Theta=\frac{{\bf m}}{\sigma^2}\vee(-1),$ $\gamma=\frac{4c}{\beta\sigma^2}$ and $U$ is the Kummer function defined by
\[U(a,b,r)=\frac{1}{\Gamma(a)}\int_0^\infty \e^{-rt}t^{a-1}(1+t)^{b-a-1}\d t,\qquad a>0,b\in\mathbb R,r>0.\]
\end{theorem}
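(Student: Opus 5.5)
The approach rests on the quenched Laplace transform of the CBBRE with stable mechanism. Conditionally on the environment $B^{(e)}$, set $K_t=\mathbf m t+\sigma B^{(e)}_t$. The process $Z_t\e^{-K_t}$ is then a time-inhomogeneous CB process, and the backward Riccati equation can be solved explicitly (as in Palau and Pardo \cite{pp2}):
\begin{equation*}
\mathbb E^z\big[\e^{-\lambda Z_t}\,\big|\,B^{(e)}\big]=\exp\Big\{-z\Big(\lambda^{-\beta}\e^{-\beta K_t}+\beta c\int_0^t \e^{-\beta K_s}\d s\Big)^{-1/\beta}\Big\}.
\end{equation*}
Letting $\lambda\to\infty$ gives $\mathbb P^z(Z_t>0\mid B^{(e)})=1-\exp\{-z(\beta c I_t)^{-1/\beta}\}$, where $I_t=\int_0^t\e^{-\beta K_s}\d s$. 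Hence
\begin{equation*}
\mathbb E^z[1-\e^{-\lambda Z_t}]-\mathbb P^z(Z_t>0)\cdot 1=-\mathbb E\Big[\e^{-z(\lambda^{-\beta}\e^{-\beta K_t}+\beta cI_t)^{-1/\beta}}-\e^{-z(\beta cI_t)^{-1/\beta}}\Big].
\end{equation*}
Consequently $1-\mathcal L_t(z,\lambda)$ is a ratio of two functionals of the pair $(\e^{-\beta K_t},I_t)$. To pass to the limit I would use time reversal: $\hat K_s=K_t-K_{t-s}$ is again a Brownian motion with drift $\mathbf m$, and
\begin{equation*}
\e^{\beta K_t}I_t=\int_0^t\e^{\beta\hat K_s}\d s=:A_t.
\end{equation*}
The numerator then becomes $\mathbb E[F_z(\e^{K_t},A_t)]$ with
\begin{equation*}
F_z(x,a)=\exp\{-zx(\lambda^{-\beta}+\beta c a)^{-1/\beta}\}\ \text{-type}
\end{equation*}
differences. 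More precisely, the integrand is $\e^{-z\e^{K_t}(\beta cA_t)^{-1/\beta}}-\e^{-z\e^{K_t}(\lambda^{-\beta}+\beta cA_t)^{-1/\beta}}$.

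The key step is to compute the asymptotics of $\mathbb E[G(\e^{K_t},A_t)]$ for $G$ of this form in each regime. Since $\mathbf m<0$, the time-reversed Brownian motion $\hat K$ is negatively drifted, so $A_t\to A_\infty=\int_0^\infty \e^{\beta \hat K_s}\d s$. By Dufresne's identity, $A_\infty$ is (up to scaling) an inverse gamma variable. Meanwhile $\e^{K_t}$ is small, but survival requires $K_t$ not too negative. I would apply a Girsanov change of measure tilting $K$ by $\e^{\theta K_t}$, which multiplies by $\e^{(\theta\mathbf m+\theta^2\sigma^2/2)t}$. The choice of $\theta$ depends on the regime.

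In the strongly subcritical regime ($\mathbf m<-\sigma^2$), take $\theta=1$. Both numerator and denominator are asymptotically linear in $z\e^{K_t}$, so both are $\sim z\,\mathbb E[\e^{K_t}\,h(A_t)]$. Under the tilted measure the drift of $\hat K$ becomes $\mathbf m+\sigma^2<0$, so $A_t$ still converges to a Dufresne variable, and the $z$ cancels in the ratio.

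In the weak and intermediate regimes, the tilt is $\theta=-\mathbf m/\sigma^2$ (resp. $\theta=1$ at the boundary), producing zero drift. Conditional on the relevant event, $K_t$ is pushed to order $-\sqrt t$ or tends to a Bessel-3 type limit. Here I would use the local limit/meander arguments behind \cite[Theorem 3]{pp2}, which give the polynomial prefactors $t^{-3/2}$ and $t^{-1/2}$. Those prefactors are identical in the numerator and denominator, and they cancel.

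The $z$-independence comes from the fact that, at leading order, the dependence on $z$ factors out identically in numerator and denominator. In the strongly and intermediate regimes it enters linearly. In the weak regime it enters through the same integral $\int(1-\e^{-z(\cdot)})\phi$; there I need to show that the limiting ratio decouples, which I expect to be the main obstacle. My plan is to condition on the value of the (tilted, zero-drift) endpoint $K_t=-y$ with $y$ large. In that case $z\e^{K_t}\to0$, so linearization in $z$ applies inside both expectations, with a uniform integrability estimate near $y=O(1)$. The dominant contribution to the Yaglom limit then comes from the part of the path near time $t$ (the reversed path at times $O(1)$), where the tilted reversed process is a drift-$(\mathbf m+\theta\sigma^2)$ Brownian motion conditioned appropriately. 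This gives the limit of $1-\mathcal L$ as
\begin{equation*}
\frac{\mathbb E[A_\infty^{-\kappa}-(\lambda^{-\beta}+\beta cA_\infty)^{-\kappa}]}{\mathbb E[A_\infty^{-\kappa}]}\ \text{ with }\kappa=\tfrac{1}{\beta}\big(\Theta-\tfrac{2\mathbf m}{\sigma^2}\big)\ \text{(after tilting)},
\end{equation*}
where $A_\infty$ is the tilted exponential functional, a gamma-reciprocal by Dufresne.

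Finally, evaluating $\mathbb E[(\lambda^{-\beta}+\beta c A_\infty)^{-\kappa}]$ against a gamma density $\propto u^{\nu-1}\e^{-u\gamma/2\cdot}$ and substituting gives exactly the integral representation of the Kummer $U$. The parameters $a=\kappa$ and $b=1+\frac2\beta(\Theta-\mathbf m/\sigma^2)$, together with the prefactor $(\gamma/2)^{\kappa}\lambda^{\beta\kappa}$, come out of the change of variables $t=\beta c\lambda^\beta A_\infty\cdot$const. This final step is a routine computation.
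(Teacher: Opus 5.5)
Your outline of the strongly subcritical case follows the paper's argument. Tilting by $\e^{K_t}$ is the Girsanov step of Lemma~\ref{betatrans}, and Dufresne's identity (Lemma~\ref{gam}) gives the limit. Your intermediate case matches the paper's use of Lemma~\ref{bh12}. In both cases you still owe the second-moment control that justifies replacing $1-\e^{-x}$ by $x$ (the hypotheses of Lemma~\ref{fenzifenmu}), but that is a fillable detail.

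\textbf{The gap is the weakly subcritical regime,} which is exactly where $z$-independence is not automatic. Take the zero-drift tilt $\theta=-{\bf m}/\sigma^2=\beta\eta/2<1$. The tilted integrand satisfies $\e^{-\theta K_t}\bigl(1-\e^{-z\e^{K_t}(\beta cA_t)^{-1/\beta}}\bigr)\le z\,\e^{(1-\theta)K_t}(\beta cA_t)^{-1/\beta}$, so on $\{K_t\approx -y\}$ its contribution has relative size $\e^{-(1-\theta)y}$. The region where your linearization is valid therefore carries an exponentially small share. The whole leading term $t^{-3/2}\e^{-{\bf m}^2t/(2\sigma^2)}$ comes from $K_t=O(1)$: the survival-conditioned environment is excursion-like and returns to height $O(1)$, with both ends of the path contributing order-one amounts to $I_t$. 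There $z\e^{K_t}(\beta cA_t)^{-1/\beta}$ is of order one, so second-order terms are as large as first-order ones (cf.\ the remark after Lemma~\ref{betatrans}). Consequently $\Phi(z,\lambda)$ and $\Psi(z)$ are genuinely non-linear in $z$. Your ``uniform integrability near $y=O(1)$'' is not an error estimate; that region is the main term. Linearization can at best recover $\lim_{z\downarrow0}\Phi(z,\lambda)/\Psi(z)$, not the ratio at fixed $z$.

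\textbf{The missing idea} is an exact identity showing that $\Phi(z,\lambda)/\Psi(z)$ is free of $z$ despite this non-linearity. The paper obtains it from the Matsumoto--Yor joint density (Proposition~\ref{jtpdf}) in four steps. Dominated convergence gives a limit kernel $G(y,v)$. Both $\Phi$ and $\Psi$ are then written as integrals of the same function $(1-\e^{-z\e^x})\e^{-\beta\eta x}$ against measures $\nu_\Phi$ and $\nu_\Psi$. All exponential moments of these measures are computed in closed form (Lemma~\ref{keyprop0}, Proposition~\ref{keyprop}), and they differ only by the $q$-independent factor $(\gamma\lambda^\beta/2)^{\eta/2}U(\frac\eta2,1,\frac{\gamma\lambda^\beta}{2})$. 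Uniqueness of moment generating functions then gives $\nu_\Phi=c(\lambda)\nu_\Psi$.

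\textbf{Your closing computation also does not work as stated.} Suppose $\beta cA_\infty=\gamma/(2G_s)$ with $G_s$ a Gamma$(s)$ variable, and set $r=\gamma\lambda^\beta/2$. Then $\mathbb E[(\lambda^{-\beta}+\beta cA_\infty)^{-\kappa}]/\mathbb E[(\beta cA_\infty)^{-\kappa}]=r^{s+\kappa}U(s+\kappa,1+s,r)$.
\begin{itemize}
\item In the weak and intermediate regimes the tilt has zero drift, so $A_t\to\infty$ and there is no Dufresne variable at all. Matching \eqref{mainthm0327} there forces the degenerate shape $s=0$, which corresponds to the $\sigma$-finite law $a^{-1}\e^{-a}\,\d a$ of Lemma~\ref{bh12}.
\item In the weak regime, with the exponent $1/\beta$ that linearization actually produces, no $s\ge0$ works.
\item In the strong regime the exponent is $1/\beta$, not $\kappa=\eta-\frac1\beta$, and the Dufresne shape is $s=\eta-\frac2\beta$. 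The Kummer parameter $a=\eta-\frac1\beta$ is their sum.
\item Your displayed ratio equals $\mathcal L$, not $1-\mathcal L$.
\end{itemize}
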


\begin{remark}
		When $\sigma=0,$ the model degenerates into a continuous-state branching process without environment. In this situation,
		\beqnn
		\mathcal L(z,\lambda)\ar=\ar 1-\lim _{t \rightarrow \infty}\mathbb E\left[1-\e^{-\lambda Z_t} \mid Z_t>0\right]\cr
		\ar=\ar 1-\lim _{t \rightarrow \infty}\frac{1-\exp\{-z[{\lambda}^{-\beta}\e^{-{\bf m}\beta t}+\frac{c}{{\bf m}}(1-\e^{-{\bf m}\beta t})]^{-\frac{1}{\beta}}\}}{1-\exp\{-z[\frac{c}{{\bf m}}(1-\e^{-{\bf m}\beta t})]^{-\frac{1}{\beta}}\}}\cr
		\ar=\ar{1-\big(1-\frac{\alpha}{c}\lambda^{-\beta}\big)^{-\frac{1}{\beta}}.}
		\eeqnn
This result is consistent with \cite[Theorem 3.1]{lambert}.
\end{remark}

\begin{remark}
The result of Theorem \ref{mainthm} is consistent with \cite[Theorem 7]{bansaye}. As for GWREs, the Yaglom limit is also independent of the initial state in both the intermediately and strongly subcritical regimes. Although for weakly subcritical GWREs, whether the Yaglom limit depends on the initial value remains an open problem, Theorem \ref{mainthm} suggests that independence might still hold for this discrete model. However, the proof for the discrete model will, of course, be more involved.
\end{remark}

 \begin{remark}
In this work, we focus  on subcritical processes, as the limit \eqref{yaglom} becomes degenerate for critical or supercritical cases. In the critical case, the limit only makes sense after a proper scaling of the process. For a classical critical Galton-Watson process $\{X_n:n=0,1,2,\cdots\},$ Yaglom \cite{yaglom} established the existence of the conditional limit $$\lim_{n\to\infty} \mbb{P}\bigg(\frac{X_n}{n}\in\cdot|X_n>0\bigg)$$ under a moment condition. The analog of this result for continuous-state branching processes was given in Li \cite{lialone}. However, for critical branching processes in random environments, finding the proper scaling  becomes  more complex. To the best of our knowledge, the Yaglom limit for critical branching processes in random environments remains unexplored in the literature, both for discrete and continuous cases.
\end{remark}

\section{Proofs}

Recall
 \(\eta = -\frac{2 {\bf m}}{\beta\sigma^2}.\) It follows that
\begin{align*}
\eta &< \frac2\beta \quad \text{in the weakly subcritical regime } ({\bf m}\in(-\sigma^2,0)),\\
\eta &= \frac2\beta \quad \text{in the intermediately subcritical regime } ({\bf m}=-\sigma^2),\\
\eta &> \frac2\beta \quad \text{in the strongly subcritical regime } ({\bf m}\in(-\infty,-\sigma^2)).
\end{align*}
Define the exponential functional of Brownian motion as follows:
$$ A_t^{(\eta)}=\int_0^t \exp \left\{2\left(\eta s+B^{(e)}_s\right)\right\} \d s.$$
 The auxiliary process $ A_t^{(\eta)}$ plays a central role in the expression of the Yaglom limit and survival probability. Recall that $\gamma=\frac{4c}{\beta\sigma^2}.$ Define for $t>0,\lambda>0,$
 \beqlb\label{0412xi}
\xi_t(\lambda)=\left[\gamma A_{\frac{\sigma^2\beta^2}{4}t}^{(\eta)}+{\lambda}^{-\beta} \exp \left\{2\left(\frac{\sigma^2\beta^2}{4}\eta t+B^{(e)}_{\frac{\sigma^2\beta^2}{4}t}\right)\right\}\right]^{-\frac{1}{\beta}},\quad\zeta_t=\left[\gamma A_{\frac{\sigma^2\beta^2}{4}t}^{(\eta)}\right]^{-\frac{1}{\beta}}.
 \eeqlb

 The following theorem establishes a duality between the Laplace transform of the solution to SDE \eqref{stablesde} or \eqref{browniansde} and the Laplace transform of a stochastic process constructed from Brownian motion $(B_t^{(e)})_{t\ge 0}$ and its exponential functional $(A_t^{(\eta)})_{t\ge 0}$. The former is implicit, while the latter is explicit and has many existing results that can be exploited. Consequently, the Yaglom limit problem reduces to the problem of the ratio of the Laplace transforms of the two processes $(\xi_t(\lambda))_{t\ge 0}$ and $(\zeta_t)_{t\ge 0}$:
 \[\mathcal L(z,\lambda)=1-\lim _{t \rightarrow \infty}\frac{\mathbb E\left[1-\e^{-z \xi_t(\lambda)} \right]}{\mathbb E\left[1-\e^{-z \zeta_t} \right]},\quad z>0,\,\,\lambda>0.\]
\begin{theorem}\label{firstlem}
For any $z>0,\lambda>0,$ we have
\begin{equation}\label{1'}
\mathbb E^{z}\left[\exp \left(-\lambda Z_t \right)\right]=\mathbb E\left[\exp \left(-z \xi_t(\lambda) \right)\right],
\end{equation}
and thus
\begin{equation}\label{2}
 \mathbb{P}^z\left(Z_t>0\right)=1-\mathbb E\left[\exp \left(-z \zeta_t\right)\right].
\end{equation}
\end{theorem}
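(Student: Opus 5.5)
Conditionally on the environment, $Z$ is a continuous-state branching process, so its Laplace transform is of the form $\exp(-z v)$ for some random function $v$; we identify $v$ explicitly and then average over the environment.

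\textbf{Step 1: quenched backward equation.} Condition on $B^{(e)}$ and fix $t>0$. The standard quenched theory for CBBREs (Palau--Pardo \cite{pp2}, He et al.\ \cite{hlx}) gives
\[
\mathbb E^z[\e^{-\lambda Z_t}\mid B^{(e)}]=\exp\{-z\,v_{0,t}(\lambda)\}.
\]
We obtain $v$ by working with the discounted process $\tilde Z_s=Z_s\e^{-K_s}$, where $K_s={\bf m}s+\sigma B^{(e)}_s$. By It\^o's formula, $\tilde Z$ has no environmental drift or diffusion term, and it is a time-inhomogeneous CB process with branching mechanism $\e^{-K_s}\,c(\e^{K_s}\cdot)^{\beta+1}$. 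The backward equation then reads
\[
\partial_s \tilde v_{s,t}(\lambda)=c\,\e^{\beta K_s}\,\tilde v_{s,t}(\lambda)^{\beta+1},\qquad 0\le s\le t,
\]
with terminal condition $\tilde v_{t,t}(\lambda)=\lambda\e^{K_t}$. Substituting $w=\tilde v^{-\beta}$ makes the equation linear: $\partial_s w=-c\beta\,\e^{\beta K_s}$. Integrating gives
\[
v_{0,t}(\lambda)=\Big[\lambda^{-\beta}\e^{-\beta K_t}+c\beta\int_0^t \e^{-\beta K_s}\,\d s\Big]^{-1/\beta}.
\]
The sign of the exponent in the integral needs a careful check. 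The cleaner way is to verify directly that $\exp\{-\tilde Z_s\,\tilde v_{s,t}\}$ is a martingale given the environment, using the jump compensator of \eqref{stablesde} when $\beta<1$ and the quadratic variation of \eqref{browniansde} when $\beta=1$. Both cases give $\psi$-type generators with the same nonlinearity $c\lambda^{\beta+1}$.

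\textbf{Step 2: match with $\xi_t$.} We need
\[
\lambda^{-\beta}\e^{-\beta K_t}+c\beta\int_0^t\e^{-\beta K_s}\,\d s \overset{d}{=} \gamma A^{(\eta)}_{\sigma^2\beta^2 t/4}+\lambda^{-\beta}\exp\Big\{2\Big(\tfrac{\sigma^2\beta^2}{4}\eta t+B^{(e)}_{\sigma^2\beta^2t/4}\Big)\Big\}
\]
as processes in $t$, in law. First, $-\beta K_s=-\beta{\bf m}s-\beta\sigma B_s$, and $B\mapsto -B$ preserves the law. Then Brownian scaling $s=4u/(\sigma^2\beta^2)$ gives $\beta\sigma B_s\overset{d}{=}2B'_u$. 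The drift becomes $-\beta{\bf m}\cdot 4u/(\sigma^2\beta^2)=2\eta u$, with $\eta=-2{\bf m}/(\beta\sigma^2)$. The time change produces the factor $c\beta\cdot 4/(\sigma^2\beta^2)=\gamma$. Taking expectations over the environment yields \eqref{1'}.

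\textbf{Step 3: survival probability.} Let $\lambda\to\infty$. By monotone convergence, $\mathbb E^z[\e^{-\lambda Z_t}]\to\mathbb P^z(Z_t=0)$, and $\xi_t(\lambda)\uparrow\zeta_t$. Dominated convergence then gives \eqref{2}.

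\textbf{Main obstacle.} The hard part is Step 1: rigorously justifying the quenched representation from the strong SDEs. This means showing the martingale property of $\exp\{-\tilde Z_s\tilde v_{s,t}\}$, which requires that the stochastic integrals are true martingales; localization plus boundedness of the exponential handles this. A subtlety is that $v$ depends on the future path $K_{(s,t]}$, so conditioning on the whole environment must be justified. This works because $B^{(e)}$ is independent of $N$ and $B^{(b)}$, so conditionally on $B^{(e)}$ the process $Z$ is an inhomogeneous CB process.
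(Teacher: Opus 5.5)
Your overall route is the paper's. The paper imports the quenched identity from Palau--Pardo \cite{pp2} (their (16), stated for $Z_t\e^{-H^{(0)}_t}$, where $H^{(0)}$ is your $K$) and replaces $\lambda$ by $\lambda\e^{H^{(0)}_t}$. It then carries out exactly your Steps 2 and 3: reflection and Brownian scaling to produce $\gamma A^{(\eta)}_{\sigma^2\beta^2t/4}$ and the terminal factor jointly, then $\lambda\to\infty$. Steps 2 and 3 are fine as written.

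The flaw is in Step 1, and it is more than the sign in the final integral that you flag: the branching mechanism you give for the discounted process is wrong. If $Z$ has mechanism $\phi(\lambda)=c\lambda^{\beta+1}$, then $aZ$ has mechanism $a^{-1}\phi(a\lambda)$. With $a=\e^{-K_s}$, the mechanism of $\tilde Z$ is therefore $\e^{K_s}c(\e^{-K_s}\lambda)^{\beta+1}=c\,\e^{-\beta K_s}\lambda^{\beta+1}$, not $\e^{-K_s}c(\e^{K_s}\lambda)^{\beta+1}=c\,\e^{\beta K_s}\lambda^{\beta+1}$. You can confirm this from the compensator of \eqref{stablesde}. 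A jump $z$ of $Z$ at time $s$ is a jump $\e^{-K_s}z$ of $\tilde Z$, and it occurs at rate $Z_{s-}=\e^{K_s}\tilde Z_{s-}$ times the L\'evy density. Hence the quenched generator applied to $\e^{-\lambda y}$ is $y\e^{-\lambda y}\,\e^{K_s}c(\e^{-K_s}\lambda)^{1+\beta}$. For $\beta=1$ one has $\d\langle\tilde Z\rangle_s=2c\,\e^{-K_s}\tilde Z_s\,\d s$, giving $c\,\e^{-K_s}\lambda^2$. Consequently your displayed equation $\partial_s\tilde v=c\,\e^{\beta K_s}\tilde v^{\beta+1}$ integrates to $\lambda^{-\beta}\e^{-\beta K_t}+c\beta\int_0^t\e^{+\beta K_s}\,\d s$. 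That is not the formula you then write down. It also would not reduce to $\xi_t(\lambda)$ in Step 2, because the integral and the terminal factor would see the environment with opposite signs.

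The repair is immediate. The correct backward equation is $\partial_s\tilde v_{s,t}=c\,\e^{-\beta K_s}\tilde v_{s,t}^{\beta+1}$ with $\tilde v_{t,t}=\lambda\e^{K_t}$. Then $w=\tilde v^{-\beta}$ satisfies $\partial_s w=-c\beta\,\e^{-\beta K_s}$, and integrating from $0$ to $t$ gives precisely your displayed $v_{0,t}(\lambda)$, which agrees with the paper's formula. With that correction, your Step 1 becomes a self-contained derivation, via the martingale $\exp\{-\tilde Z_s\tilde v_{s,t}\}$ given the environment, of what the paper simply cites. The rest of your argument then coincides with the paper's proof.
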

\begin{proof}
Let
$$ H_t=\sigma B^{(e)}_t-\frac{\sigma^2}{2}t,\qquad H_t^{(0)}=\sigma B^{(e)}_t+{\bf m}t.$$
According to \cite[(16)]{pp2}, for all $t, z, \lambda \in(0, \infty),$ we have
\begin{equation*}
\mathbb E^{z}\left[\exp \left(-\lambda Z_t \e^{-H^{(0)}_t}\right)\mid H\right]=\exp \left\{-z\left[c\beta\int_0^t  \exp \left(-\beta H^{(0)}_s\right)\d s+{\lambda}^{-\beta} \right]^{-\frac{1}{\beta}}\right\},
\end{equation*}
which implies the following identity
\begin{equation*}
\mathbb E^{z}\left[\exp \left(-\lambda Z_t \right)\right]=\mathbb E\exp \left\{-z\left[c\beta\int_0^t  \exp \left(-\beta H_s^{(0)}\right)\d s+{\lambda}^{-\beta}\e^{-\beta H_t^{(0)}} \right]^{-\frac{1}{\beta}}\right\}.
\end{equation*}

Using the scaling property of Brownian motion, see  \cite[pp.54]{hbbm},
\begin{equation}\label{revise1}
(-\beta  H_s^{(0)})_{0\le s\le t}=(-{\bf m} \beta s-\sigma\beta B^{(e)}_s)_{0\le s\le t} \stackrel{d}{=} \left(\eta \frac{\sigma^2\beta^2}{2} s+2 B^{(e)}_{\frac{\sigma^2\beta^2}{4} s}\right)_{0\le s\leq t}.
\end{equation}
Hence,
\beqlb\label{112000}
\int_0^t  \exp \left(-\beta H_s^{(0)}\right) \d s\ar\stackrel{d}{=}\ar\int_0^t \exp \left(2 \eta \frac{\sigma^2\beta^2}{4} s+2 B^{(e)}_{\frac{\sigma^2\beta^2}{4} s}\right) \d s\cr\cr
\ar=\ar \int_0^{\frac{\beta^2\sigma^2}{4}t} \exp \left(2 \eta u+2 B^{(e)}_u\right) \frac{4}{\sigma^2\beta^2}\d u\cr\cr
\ar=\ar \frac{4}{\beta^2\sigma^2}\int_0^{\frac{\beta^2\sigma^2}{4}t} \exp \left(2 \eta u+2 B^{(e)}_u\right) \d u.
\eeqlb
By the process-level identity in distribution in \eqref{revise1}, the pair
\[
\left(
\int_0^t \e^{-\beta H_s^{(0)}}\,\d s,\,
{-\beta H_t^{(0)}}
\right)
\]
has the same distribution as
\[
\left(
\frac{4}{\sigma^2\beta^2}A^{(\eta)}_{\frac{\sigma^2\beta^2}{4}t},\,
2\left(\eta \frac{\sigma^2\beta^2}{4}t+B^{(e)}_{\frac{\sigma^2\beta^2}{4}t}\right)
\right).
\]
Therefore,
\beqnn
\ar\ar\left[c\beta\int_0^t \exp \left(-\beta H_s^{(0)}\right)\d s + \lambda^{-\beta}\e^{-\beta H_t^{(0)}} \right]^{-\frac{1}{\beta}} \\
\ar\ar\qquad\stackrel{d}{=} \left[\gamma A_{\frac{\sigma^2\beta^2}{4}t}^{(\eta)} + \lambda^{-\beta} \exp \left\{2\left(\frac{\sigma^2\beta^2}{4}\eta t + B^{(e)}_{\frac{\sigma^2\beta^2}{4}t}\right)\right\}\right]^{-\frac{1}{\beta}}=\xi_t(\lambda),
\eeqnn
and \eqref{1'} follows. Letting $\lambda\to\infty$ in the above, we obtain \eqref{2}.
\qed
\end{proof}
For the weakly subcritical case, we take advantage of the joint density function of $A_t^{(\eta)}$ and $\left(\eta t+B^{(e)}_t\right)$ to derive integral representations for
$\mathbb E\left[1-\e^{-z \xi_t(\lambda)} \right]$ and \(\mathbb E\left[1-\e^{-z \zeta_t} \right].\) We then identify their asymptotic limits by passing the limit inside the integrals. The independence of the Laplace transform of the Yaglom limit is proved via a change of measure.

For the intermediately and strongly subcritical regimes, the  estimates that justify the interchange of limit and integration used in the weakly subcritical case are no longer available. Thus, it is difficult to obtain the explicit expressions of $\lim_{t\rightarrow\infty}\mathbb E\left[1-\e^{-z \xi_t(\lambda)} \right]$ and \(\lim_{t\rightarrow\infty}\mathbb E\left[1-\e^{-z \zeta_t} \right]\) directly from the density. We therefore use Lemma \ref{fenzifenmu}  to reduce the problem to the asymptotic analysis of suitable first moments of $\xi_t(\lambda)$ and $\zeta_t$, after showing that the corresponding second-order terms are negligible under the chosen normalization. The resulting first-moment asymptotics are then derived from Lemma \ref{bh12} in the intermediately subcritical case and from Dufresne's identity in the strongly subcritical case. It turns out that the first moments are both linear functions of the initial value $z,$ therefore the Laplace transform of the Yaglom limits in these two regimes are also independent of the initial value. This method is not suitable for the weakly subcritical case, since normalized second-order terms are not negligible in that regime.

\subsection{The weakly subcritical regime}
In the weakly subcritical regime, where $\eta<\frac2\beta,$ we will give the explicit expression of the Laplace transform of the Yaglom limit and show that it is independent of the initial value. We need the following result.
\begin{proposition}[Matsumoto and Yor \cite{my03}]\label{jtpdf}
For any $t>0,$ the joint probability density function of $A_t^{(\eta)}$ and $\left(\eta t+B^{(e)}_t\right)$ is given by
\beqnn
\ar\ar\mathbb P\left(A_t^{(\eta)} \in \d y, \eta t+B^{(e)}_t \in \d x\right) =\e^{\eta x-\frac{1}{2} \eta^2t} \frac{1}{y} \exp \left(-\frac{1+\e^{2 x}}{2 y}\right) \theta\left(\frac{\e^x}{y}, t\right) \d y \d x,
\eeqnn
where $$\theta(r, t)=\frac{r}{\sqrt{2 \pi^3 t}} \int_0^{\infty} \e^{\frac{\pi^2-u^2}{2 t}} \e^{-r \cosh u}\sinh (u) \sin \left(\frac{\pi u}{t}\right) \d u,\quad r>0,\,t>0.$$
\end{proposition}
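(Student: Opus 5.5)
This is a result of Matsumoto and Yor (building on Yor's formula for driftless Brownian motion), and I would prove it in three steps. First, Girsanov removes the drift. Second, a Lamperti time change together with a Laplace transform in $t$ reduces the driftless case to the Bessel semigroup. Third, the Hartman--Watson Laplace identity identifies $\theta$.

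\emph{Step 1 (removing the drift).} On $[0,t]$, the law of $(\eta s+B^{(e)}_s)_{s\le t}$ has density $\exp\{\eta B^{(e)}_t-\frac12\eta^2 t\}$ with respect to the law of $(B^{(e)}_s)_{s\le t}$, by Cameron--Martin. Since $A^{(\eta)}_t$ is the functional $\int_0^t \e^{2w_s}\,\d s$ of the path $w$, applied to the drifted path, we get
\[
\mathbb P\bigl(A^{(\eta)}_t\in \d y,\ \eta t+B^{(e)}_t\in \d x\bigr)=\e^{\eta x-\frac12\eta^2 t}\,\mathbb P\bigl(A^{(0)}_t\in \d y,\ B^{(e)}_t\in \d x\bigr).
\]
It therefore suffices to show that the driftless joint density equals $\frac1y\exp\bigl(-\frac{1+\e^{2x}}{2y}\bigr)\theta(\e^x/y,t)$.

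\emph{Step 2 (Laplace transform in $t$ via Lamperti).} Fix $\nu\ge 0$ and compute $\int_0^\infty \e^{-\nu^2t/2}\,\mathbb P(A^{(0)}_t\in\d y,B^{(e)}_t\in\d x)\,\d t$. Using Girsanov again, with drift $\nu$, we have $\e^{-\nu^2 t/2}=\e^{-\nu B_t}\times(\text{density of drift }\nu)$. Hence this transform equals
\[
\e^{-\nu x}\int_0^\infty \mathbb P\bigl(A^{(\nu)}_t\in\d y,\ \nu t+B^{(e)}_t\in\d x\bigr)\,\d t .
\]
By Lamperti's representation, $\e^{\nu t+B^{(e)}_t}=R_{A^{(\nu)}_t}$, where $R$ is a Bessel process of index $\nu$ started at $1$. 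Moreover $\d A^{(\nu)}_t=R^2_{A^{(\nu)}_t}\,\d t$. Changing variables $u=A^{(\nu)}_t$ therefore turns the occupation integral into $\mathbb P(R_y\in\d\rho)\,\rho^{-2}\,\d y$ with $\rho=\e^x$.

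Next I insert the Bessel transition density $p_y(1,\rho)=\frac{\rho^{\nu+1}}{y}\exp\bigl(-\frac{1+\rho^2}{2y}\bigr)I_\nu(\rho/y)$ and the Jacobian $\d\rho=\rho\,\d x$. The powers of $\rho$ combine to $\rho^{\nu}=\e^{\nu x}$, which cancels the prefactor $\e^{-\nu x}$. The result is
\[
\int_0^\infty \e^{-\nu^2t/2}\,\mathbb P\bigl(A^{(0)}_t\in\d y,B^{(e)}_t\in\d x\bigr)\d t=\frac1y\exp\Bigl(-\frac{1+\e^{2x}}{2y}\Bigr)I_\nu\Bigl(\frac{\e^x}{y}\Bigr)\,\d y\,\d x .
\]

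\emph{Step 3 (inversion).} The remaining task is the Hartman--Watson identity
\[
\int_0^\infty \e^{-\nu^2 t/2}\,\theta(r,t)\,\d t=I_\nu(r),\qquad r>0,\ \nu\ge0 .
\]
I expect this to be the main obstacle. I would prove it from Schläfli's integral representation of $I_\nu(r)$: $\frac1\pi\int_0^\pi \e^{r\cos s}\cos(\nu s)\,\d s-\frac{\sin(\nu\pi)}{\pi}\int_0^\infty \e^{-r\cosh u-\nu u}\,\d u$. I would write $\e^{-\nu u}$ and the trigonometric terms as Laplace transforms in $t$ of Gaussian-type kernels, then deform the contour (equivalently, integrate by parts in $u$) to reach the single $\sinh(u)\sin(\pi u/t)$ form. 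Fubini is justified by the factor $\e^{-r\cosh u}$.

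Finally, both sides are continuous in $t$, and Laplace transforms in the variable $\nu^2/2\in[0,\infty)$ determine measures uniquely. Together these identify $\mathbb P(A^{(0)}_t\in\d y,B^{(e)}_t\in\d x)$ for each $t$, and Step 1 then yields the stated formula.
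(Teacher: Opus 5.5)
The paper gives no proof of this proposition; it only cites Matsumoto and Yor. Your Steps 1 and 2 are correct and reproduce Yor's derivation; the exponent count $\rho^{\nu+1}\rho^{-2}\rho=\rho^{\nu}$ checks out.

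The gap is Step 3, and the route you sketch would fail as described. In Schl\"afli's formula, the pieces $\cos(\nu s)$ ($0<s<\pi$) and $\sin(\nu\pi)e^{-\nu u}$ ($0<u<\pi$) are not Laplace transforms in $\lambda=\nu^2/2$ of Gaussian-type kernels. The would-be kernel $\frac{a}{\sqrt{2\pi t^{3}}}e^{-a^{2}/(2t)}$ with $a=is$ or $a=u\mp i\pi$ has $\mathrm{Re}(a^2)<0$, so it blows up like $e^{c/t}$ at $t=0$. Indeed, $\cos(s\sqrt{2\lambda})$ does not even tend to $0$ as $\lambda\to\infty$.

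For the same reason your Fubini justification is wrong. In $\int_0^\infty e^{-\nu^2t/2}\theta(r,t)\,\mathrm{d}t$ the integrand contains $t^{-1/2}e^{(\pi^2-u^2)/(2t)}\sin(\pi u/t)$, whose absolute value is not integrable near $t=0$ for $u<\pi$. The trouble is at $t=0$, not at $u=\infty$, and $\theta$ is small there only through cancellation in $\sin(\pi u/t)$. Integration by parts and contour deformation are not \emph{equivalent}; you need both, in a specific order.

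\emph{The ordering that works.} Start from the contour form $I_\nu(r)=\frac{1}{2\pi i}\int_{\infty-i\pi}^{\infty+i\pi}e^{r\cosh w-\nu w}\,\mathrm{d}w$ and integrate by parts to get $I_\nu(r)=\frac{r}{2\pi i}\int \sinh w\,e^{r\cosh w}\,\nu^{-1}e^{-\nu w}\,\mathrm{d}w$ for $\nu>0$. Then push the contour into $\{|\arg w|<\pi/4\}$: use the half-lines $\mathrm{Im}\,w=\pm\pi$, $\mathrm{Re}\,w\ge R$, joined by the segment $\mathrm{Re}\,w=R$ with $R>\pi$. On this contour $\nu^{-1}e^{-\nu w}=\int_0^\infty e^{-\nu^2t/2}(2\pi t)^{-1/2}e^{-w^2/(2t)}\,\mathrm{d}t$ converges absolutely, so Fubini is legitimate. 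This gives $I_\nu(r)=\int_0^\infty e^{-\nu^2 t/2}\theta(r,t)\,\mathrm{d}t$ with $\theta(r,t)=\frac{r}{\sqrt{2\pi t}}\,\frac{1}{2\pi i}\int\sinh w\,e^{r\cosh w-w^2/(2t)}\,\mathrm{d}w$.

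For fixed $t$ this integrand is entire, so you may move the contour back to the half-lines $\mathrm{Im}\,w=\pm\pi$, $\mathrm{Re}\,w\ge 0$, plus the segment $[-i\pi,i\pi]$. The segment contributes $0$ by oddness of $\sin s\,e^{r\cos s+s^2/(2t)}$, and the half-lines give exactly the stated $\theta$.

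The shifted representation also yields $|\theta(r,t)|\le C(r)\,t^{-1/2}e^{-(R^2-\pi^2)/(2t)}$, locally uniformly in $r$, and your final step needs this bound. A priori $\theta$ is not known to be nonnegative, so \emph{Laplace transforms determine measures} is not enough. You must use Fubini in $(t,y,x)$ together with Lerch's uniqueness theorem for continuous functions with absolutely convergent Laplace transforms, and both require the bound. Alternatively, cite Yor (1980) for the Hartman--Watson identity, as the paper does for the whole proposition.
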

Our ultimate goal in this regime is to establish the exact asymptotic behavior of expectation $F(t) := \mbb{E}^z[1-\e^{-\lambda Z_t}]$ and survival probability $\mbb{P}^z(Z_t>0)$ as $t \to \infty$. Anticipating this temporal limit, we  define a  time-independent kernel $G(y,v)$ for $y>0$ and $v>0$:
\beqlb\label{G_def}
G(y,v)=2^{\frac{\eta}{2}-1}y^{\frac{\eta}{2}-1}\exp\left(-\frac{1}{2y}\right)\,v^{\frac{\eta}{2}-1} \e^{-v}K_0\left(\sqrt{\frac{2v}{y}}\right),
\eeqlb
where $K_0$ is the modified Bessel function of the second kind. For subsequent calculations, we recall its integral representation (see \cite[Chapter 10, \S 10.25 and \S 10.32]{NISTHandbook}):
\beqlb\label{k0integral}
K_0(a)=a\int_0^\infty u\,\e^{-a\cosh u}\sinh u\,\d u = \frac{1}{2}\int_0^\infty t^{-1}\exp\left(-t-\frac{a^2}{4t}\right)\d t, \qquad a>0.
\eeqlb

For any  $z>0, \lambda>0$, we define the functions $\Psi(z)$ and $\Phi(z, \lambda)$ as follows:

\beqnn\Phi(z,\lambda)\ar=\ar \int_0^\infty \int_0^\infty  \bigl(1-\e^{-z \left(\gamma  y+2yv{\lambda}^{-\beta} \right)^{-\frac1\beta}}\bigr) G(y,v) \, \d v \, \d y,
	\eeqnn
    and
	\beqnn\Psi(z)\ar=\ar \int_0^\infty\int_0^\infty \bigl(1-\e^{-z (\gamma y)^{-\frac{1}{\beta}} }\bigr)G(y,v) \, \d v \, \d y.
	\eeqnn

We will rigorously establish that the appropriately scaled expectation $\mbb{E}^z[1-\e^{-\lambda Z_t}]$ and the survival probability $\mbb{P}^z(Z_t>0)$ converge to constant multiples of $\Phi(z, \lambda)$ and $\Psi(z)$, respectively. The Laplace transform of the Yaglom limit can thus be rewritten as
\[\mathcal L(z,\lambda)=1-\frac{\Phi(z,\lambda)}{\Psi(z)}.\]

To determine whether $\mathcal L(z,\lambda)$ depends on the initial value $z$, a natural starting point is to take the Taylor expansions in both  the numerator and the denominator.
Then we compare the corresponding  coefficients. For example, expanding the integrand of the denominator $\Psi(z)$ yields
\beqlb\label{eq041100}
\Psi(z)=\int_0^\infty\int_0^\infty
\sum_{n=1}^{\infty}\frac{(-1)^{n+1}}{n!}z^n(\gamma y)^{-n/\beta}G(y,v) \, \d v \, \d y.
\eeqlb
If the interchange of summation and integration were justified, $\Psi(z)$ could be expressed as an infinite order polynomial in $z$, where the coefficient of $z^n$ is proportional to the integral $$\int_0^\infty\int_0^\infty (\gamma y)^{-n/\beta}G(y,v)\,\d v\,\d y.$$ Analogously, a formal expansion of the numerator $\Phi(z, \lambda)$ produces a coefficient $z^n$ proportional to the integral
\beqlb\label{formal_phi_coeff}
\int_0^\infty\int_0^\infty \left(\gamma y+2yv{\lambda}^{-\beta} \right)^{-n/\beta}G(y,v)\,\d v\,\d y.
\eeqlb
Letting  $p=\frac n\beta$ in the following Lemma \ref{keyprop0}, we find that the ratios between the corresponding coefficients in the expansions for $\Phi$ and $\Psi$ remain  the same for all $n$. This common ratio would immediately imply that $\Phi(z, \lambda)/\Psi(z)$ is independent of $z.$

\begin{lemma}\label{keyprop0}
For any $p>\frac\eta2,$  we have
\beqlb\label{psimoment}
\int_0^\infty\int_0^\infty (\gamma y)^{-p}G(y,v)\,\d v\,\d y =2^{p - 2} \gamma^{- p}  \frac{\left[ \Gamma \left( \frac\eta2 \right)\Gamma\left( p-\frac\eta2  \right)  \right]^2}{\Gamma\left( p \right)},
\eeqlb
and
{\small\beqlb\label{phimoment}
\int_0^\infty\int_0^\infty (\gamma y+2yv{\lambda}^{-\beta} )^{-p}G(y,v)\,\d v\,\d y
= 2^{p - 2} \gamma^{- p} \frac{ \left[\Gamma \left( \frac\eta2 \right)\Gamma \left( p - \frac\eta2 \right)\right]^2}{\Gamma \left( p \right)} \left( \frac{\gamma  \lambda^\beta}{2} \right)^{\frac\eta2} U \left( \frac\eta2, 1, \frac{\gamma  \lambda^\beta}{2} \right).
\eeqlb}
\end{lemma}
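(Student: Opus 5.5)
Both identities reduce to one-dimensional Gamma/Bessel integrals once the $y$-integration is done first. Fubini--Tonelli applies since every integrand is nonnegative.

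For \eqref{psimoment}, the factor $(\gamma y)^{-p}$ does not involve $v$, so I would first compute the inner integral
\[
I(y)=\int_0^\infty v^{\frac\eta2-1}\e^{-v}K_0\Big(\sqrt{2v/y}\Big)\,\d v .
\]
Instead of evaluating $I(y)$ in closed form, I would integrate in $y$ before $v$, using the second representation in \eqref{k0integral},
\[
K_0(a)=\tfrac12\int_0^\infty s^{-1}\e^{-s-a^2/(4s)}\,\d s, \qquad a^2/4=\frac{v}{2y}.
\]
The $y$-dependence of the integrand is then $y^{\frac\eta2-1-p}\exp\{-\frac1{2y}(1+\frac vs)\}$. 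Substituting $w=1/y$, this integrates to
\[
\Gamma\big(p-\tfrac\eta2\big)\Big(\tfrac{1+v/s}{2}\Big)^{\frac\eta2-p}.
\]
This step is where $p>\frac\eta2$ is needed.

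What remains is the double integral
\[
\tfrac12\int_0^\infty\int_0^\infty s^{-1}\e^{-s}v^{\frac\eta2-1}\e^{-v}\Big(\tfrac{s+v}{2s}\Big)^{\frac\eta2-p}\,\d v\,\d s .
\]
I would handle it with polar-type coordinates $s=r\theta$, $v=r(1-\theta)$. It then factorizes into a Gamma integral in $r$ (exponent $\frac\eta2+p-\frac\eta2-1=p-1$, giving $\Gamma(p)$?) times a Beta integral in $\theta$.

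I expect the bookkeeping here to be the main obstacle. Checking the powers: $s^{-1}\cdot s^{p-\frac\eta2}\cdot v^{\frac\eta2-1}\cdot(s+v)^{\frac\eta2-p}$ together with the Jacobian $r$ gives $r^{0}$ overall. So the $r$-integral is $\int_0^\infty \e^{-r}\,\d r=1$. The $\theta$-integral is $B(p-\frac\eta2,\frac\eta2)=\Gamma(p-\frac\eta2)\Gamma(\frac\eta2)/\Gamma(p)$. Multiplying by the $\Gamma(p-\frac\eta2)$ from the $y$-step and by the constants $2^{\frac\eta2-1}\cdot\frac12\cdot 2^{p-\frac\eta2}\gamma^{-p}$ yields $2^{p-2}\gamma^{-p}\Gamma(\frac\eta2)\Gamma(p-\frac\eta2)^2/\Gamma(p)$. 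This is one factor $\Gamma(\frac\eta2)$ short of the claimed formula, so I would recheck the constants in $G$ and in \eqref{k0integral} carefully at this point. The structure of the answer, $\Gamma(p-\frac\eta2)^2/\Gamma(p)$, is clearly right.

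For \eqref{phimoment}, write
\[
(\gamma y+2yv\lambda^{-\beta})^{-p}=(\gamma y)^{-p}(1+\kappa v)^{-p}, \qquad \kappa=2/(\gamma\lambda^\beta).
\]
After the same $y$-integration and polar substitution, the extra factor becomes $(1+\kappa r(1-\theta))^{-p}$, which couples $r$ and $\theta$. To decouple it, I would use the Gamma representation
\[
(1+\kappa v)^{-p}=\Gamma(p)^{-1}\int_0^\infty u^{p-1}\e^{-u(1+\kappa v)}\,\d u .
\]
This changes $\e^{-v}$ into $\e^{-v(1+\kappa u)}$, and the $(r,\theta)$ integral becomes elementary in $u$. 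Alternatively, one can do the $v$-integral first. Either way, the target is the Kummer integral $U(\frac\eta2,1,\frac1\kappa)$ from the definition in Theorem~\ref{mainthm}, with $b-a-1=-\frac\eta2$, together with the prefactor $\kappa^{-\eta/2}$.

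The key conceptual point is that the $p$-dependence should separate as the same factor $\Gamma(p-\frac\eta2)^2/\Gamma(p)$ as in \eqref{psimoment}. I would verify this by substituting $v\mapsto$ a scaled variable that isolates the factor $(1+\kappa v)$ against $\e^{-v}$. This produces $\int_0^\infty t^{\frac\eta2-1}(1+t)^{-\frac\eta2}\e^{-t/\kappa}\,\d t=\Gamma(\frac\eta2)U(\frac\eta2,1,\frac1\kappa)$, while the remaining integrals reproduce the $p$-part exactly as in the first identity.
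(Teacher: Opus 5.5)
Your overall strategy for the first identity is sound: integrate out $y$ with the second representation in \eqref{k0integral}, then pass to $s=r\theta$, $v=r(1-\theta)$. It is the paper's computation in different coordinates, since the paper's $w=(1+s)v$ is your $r=s+v$ and its $s$ is your $\theta/(1-\theta)$. But the proposal as written proves neither identity.

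For \eqref{psimoment}, the missing $\Gamma(\frac\eta2)$ is your own slip, not a problem with the constants in $G$ or in \eqref{k0integral}. You dropped the radial part of $v^{\frac\eta2-1}$. The exponent of $r$ is $(p-\frac\eta2-1)+(\frac\eta2-1)+(\frac\eta2-p)+1=\frac\eta2-1$, so the radial integral is $\Gamma(\frac\eta2)$, not $1$. With that fix, your remaining bookkeeping gives exactly the stated constant.

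For \eqref{phimoment} there is a genuine missing step. The Gamma-representation decoupling does not make things elementary. After it, the $(r,\theta)$-integral is
\[
\Gamma\bigl(\tfrac\eta2\bigr)\int_0^1\theta^{p-\frac\eta2-1}(1-\theta)^{\frac\eta2-1}\bigl(1+\kappa u(1-\theta)\bigr)^{-\frac\eta2}\,\d\theta,
\]
which is a ${}_2F_1$ in $-\kappa u$. You would still need a Laplace-transform identity to reach $U$. Your sentence about a ``scaled variable'' only asserts that the $p$-dependence separates; it does not show it.

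The point you are missing is that the coupling factor is harmless, because its exponent $p$ equals the sum of the two Beta parameters. With $\tau=\theta/(1-\theta)$,
\[
\int_0^1\theta^{p-\frac\eta2-1}(1-\theta)^{\frac\eta2-1}\bigl(1+\kappa r(1-\theta)\bigr)^{-p}\,\d\theta=\int_0^\infty\tau^{p-\frac\eta2-1}(1+\kappa r+\tau)^{-p}\,\d\tau=\frac{\Gamma(p-\frac\eta2)\Gamma(\frac\eta2)}{\Gamma(p)}(1+\kappa r)^{-\frac\eta2}.
\]
Then $\int_0^\infty r^{\frac\eta2-1}\e^{-r}(1+\kappa r)^{-\frac\eta2}\,\d r=\kappa^{-\frac\eta2}\Gamma(\frac\eta2)U(\frac\eta2,1,\frac1\kappa)$ finishes the proof. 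This is exactly the paper's generalized Beta step, with $A=\gamma+2w\lambda^{-\beta}$ and $B=\gamma$.

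Organized this way, \eqref{psimoment} is simply the case $\kappa=0$. The paper instead recovers it from \eqref{phimoment} by monotone convergence and the asymptotic $U(a,b,r)\sim r^{-a}$.
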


However, the formal argument discussed before Lemma \ref{keyprop0} cannot be directly justified because the resulting series do not converge absolutely, rendering the interchange of summation and integration invalid. Instead of dealing with divergent power series, we introduce two finite, light-tailed reference measures on $\mbb{R}$, denoted by $\nu_\Psi$ and $\nu_\Phi$.

In this setting, the formal coefficients of $z^n$ in the Taylor expansions are directly transformed into the moment generating functions of these new measures. For example, for $\Psi(z)$, the equivalence is given by:
$$ \int_0^\infty\int_0^\infty (\gamma y)^{-n/\beta}G(y,v)\,\d v\,\d y = \int_{-\infty}^{\infty}\e^{q x}\nu_\Psi(\d x), \quad \text{with} \quad q = n - \beta\eta. $$
Specifically, we define the measures $\nu_\Psi$ and $\nu_\Phi$ via their Radon-Nikodym derivatives.
\beqlb\label{nu_psi_def}
\frac{\nu_\Psi(\d x)}{\d x} = \beta \gamma^{-1} \e^{-\beta (1-\eta) x} \int_0^{\infty} G\bigl(\gamma^{-1} \e^{-\beta x}, v\bigr) \, \d v,
\eeqlb
and
\beqlb\label{nu_phi_def}
\frac{\nu_\Phi(\d x)}{\d x} = \frac{\beta^2 \lambda^\beta}{2} \int_0^{\gamma^{-1/\beta}} \e^{-\beta(1-\eta)x} w^{-1} G\left(\e^{-\beta x} w^{\beta},\frac{\lambda^\beta}{2}(w^{-\beta}-\gamma)\right) \,\d w.
\eeqlb
Using these measures, the original functions $\Psi(z)$ and $\Phi(z, \lambda)$ can be rewritten as integrals of an identical  function against $\nu_\Psi$ and $\nu_\Phi$:
\beqlb\label{phi_psi_def}
\Psi(z) = \int_{-\infty}^\infty \bigl(1 - \e^{-z \e^x}\bigr) \e^{-\beta\eta x}\,\nu_\Psi(\d x), \quad \Phi(z, \lambda) = \int_{-\infty}^\infty \bigl(1 - \e^{-z \e^x}\bigr) \e^{-\beta\eta x}\,\nu_\Phi(\d x).
\eeqlb

The following Proposition \ref{keyprop} is key in proving that the Yaglom limit does not depend on the initial value. In this proposition, we establish a strict linear proportionality between the moment generating functions of the two measures. Because $\Phi(z, \lambda)$ and $\Psi(z)$ share the exact same integrand in this new representation, the proportionality of the measures directly translates into the functions themselves. Consequently, the ratio $\Phi(z,\lambda)/\Psi(z)$ cancels out the initial state $z$, rigorously proving that the Laplace transform of the Yaglom limit is independent of $z$.
\begin{proposition}\label{keyprop}
For any $q > -\frac{\beta\eta}{2}$, the moment generating functions of $\nu_\Phi$ and $\nu_\Psi$ are given by:
\beqlb\label{phimoment1}
\int_{-\infty}^\infty \e^{q x} \, \nu_{\Phi} (\d x) = 2^{\frac{q}{\beta} + \eta - 2} \gamma^{- (\frac{q}{\beta} + \eta)} \frac{ \left[\Gamma \left( \frac\eta2 \right)\Gamma \left( \frac{q}{\beta} + \frac\eta2 \right)\right]^2}{\Gamma \left( \frac{q}{\beta} + \eta \right)} \left( \frac{\gamma \lambda^\beta}{2} \right)^{\frac\eta2} U \left( \frac\eta2, 1, \frac{\gamma  \lambda^\beta}{2} \right),
\eeqlb
and
\beqlb\label{psimoment1}
\int_{-\infty}^\infty \e^{q x} \, \nu_{\Psi} (\d x) = 2^{\frac{q}{\beta} + \eta - 2} \gamma^{- (\frac{q}{\beta} + \eta)} \frac{ \left[\Gamma \left( \frac\eta2 \right)\Gamma \left( \frac{q}{\beta} + \frac\eta2 \right)\right]^2}{\Gamma \left( \frac{q}{\beta} + \eta \right)}.
\eeqlb
\end{proposition}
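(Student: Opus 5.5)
The plan is to undo the changes of variables that define $\nu_\Psi$ and $\nu_\Phi$, so that each moment generating function becomes one of the double integrals in Lemma \ref{keyprop0} with $p=\frac q\beta+\eta$. The conditions match: $q>-\frac{\beta\eta}{2}$ is exactly $p>\frac\eta2$. All integrands are nonnegative, so Tonelli's theorem justifies every change of variables and change of the order of integration, with no integrability issues beyond those already covered by Lemma \ref{keyprop0}.

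For \eqref{psimoment1}, I would substitute $y=\gamma^{-1}\e^{-\beta x}$ in the definition \eqref{nu_psi_def}. Then $\d y=\beta\gamma^{-1}\e^{-\beta x}\,\d x$ in absolute value, and $\e^{x}=(\gamma y)^{-1/\beta}$. The integrand becomes
\[
\e^{qx}\,\beta\gamma^{-1}\e^{-\beta(1-\eta)x}=\e^{(q+\beta\eta)x}\cdot \beta\gamma^{-1}\e^{-\beta x}.
\]
Since $\e^{(q+\beta\eta)x}=(\gamma y)^{-(q/\beta+\eta)}$, this gives
\[
\int_{-\infty}^\infty \e^{qx}\nu_\Psi(\d x)=\int_0^\infty\!\!\int_0^\infty(\gamma y)^{-p}G(y,v)\,\d v\,\d y,\qquad p=\tfrac q\beta+\eta.
\]
Then \eqref{psimoment} yields \eqref{psimoment1} directly, because $p-\frac\eta2=\frac q\beta+\frac\eta2$ and $2^{p-2}\gamma^{-p}$ matches the prefactor.

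For \eqref{phimoment1}, I would show that the definition \eqref{nu_phi_def} is the pushforward of the integrand of \eqref{formal_phi_coeff} under the map $(y,v)\mapsto x$, where $\e^{x}=(\gamma y+2yv\lambda^{-\beta})^{-1/\beta}$. Concretely, I would use the variables $(x,w)$ given by
\[
y=\e^{-\beta x}w^\beta,\qquad v=\tfrac{\lambda^\beta}{2}(w^{-\beta}-\gamma),\qquad w\in(0,\gamma^{-1/\beta}).
\]
With these, $\gamma y+2yv\lambda^{-\beta}=y\,w^{-\beta}=\e^{-\beta x}$. The Jacobian is
\[
\left|\frac{\partial(y,v)}{\partial(x,w)}\right|=\beta\e^{-\beta x}w^\beta\cdot\tfrac{\lambda^\beta}{2}\beta w^{-\beta-1}=\tfrac{\beta^2\lambda^\beta}{2}\e^{-\beta x}w^{-1}.
\]
Multiplying by $\e^{qx}$ and writing $\e^{-\beta x}=\e^{-\beta(1-\eta)x}\e^{-\beta\eta x}$, the integrand becomes $(\gamma y+2yv\lambda^{-\beta})^{-p}G(y,v)$ times the density in \eqref{nu_phi_def}. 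Hence the MGF of $\nu_\Phi$ equals the left side of \eqref{phimoment} with $p=\frac q\beta+\eta$, and \eqref{phimoment} finishes the proof.

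The only point needing care is checking that $(x,w)\mapsto(y,v)$ is a bijection from $\mathbb R\times(0,\gamma^{-1/\beta})$ onto $(0,\infty)^2$. This holds because $w\mapsto v$ is a decreasing bijection onto $(0,\infty)$, and for fixed $w$ the map $x\mapsto y$ is a bijection onto $(0,\infty)$. I would also check the sign and exponent bookkeeping in $\e^{-\beta(1-\eta)x}$, which is where an error is most likely. The substantive analytic content sits entirely in Lemma \ref{keyprop0}, which is assumed.
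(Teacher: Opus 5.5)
Your proposal is correct and matches the paper's proof for $\nu_\Phi$: the same change of variables $y=\e^{-\beta x}w^\beta$, $v=\frac{\lambda^\beta}{2}(w^{-\beta}-\gamma)$, the same Jacobian, then Lemma \ref{keyprop0} with $p=\frac q\beta+\eta$. For $\nu_\Psi$ the paper instead lets $\lambda\to\infty$ by monotone convergence in the $\nu_\Phi$ double integral and then cites \eqref{psimoment}, leaving the identification $\int_{-\infty}^\infty \e^{qx}\,\nu_\Psi(\d x)=\int_0^\infty\int_0^\infty(\gamma y)^{-p}G(y,v)\,\d v\,\d y$ implicit; your direct substitution $y=\gamma^{-1}\e^{-\beta x}$ proves exactly that identification, so your route is cleaner and self-contained.
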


\noindent{\it Proof of Lemma \ref{keyprop0}.}
We first prove \eqref{phimoment}. It is easy to verify that
\beqlb\label{phi0313}
\ar\ar\int_0^\infty\int_0^\infty  \left(\gamma y+2yv{\lambda}^{-\beta} \right)^{-p}G(y,v)\,\d v\,\d y
\cr\cr\ar=\ar2^{\frac\eta2-1}\int_0^\infty\int_0^\infty\left(\gamma  y+2yv{\lambda}^{-\beta} \right)^{-p}y^{\frac{\eta}{2}-1}\,v^{\frac{\eta}{2}-1} \e^{-\frac{1}{2y}-v}K_0\left(\sqrt{\frac{2v}{y}}\right)\,\d v\,\d y\cr\cr
\ar=:\ar2^{\frac\eta2-1}\int_0^\infty\left(\gamma  +2v{\lambda}^{-\beta} \right)^{-p}\,v^{\frac{\eta}{2}-1} \e^{-v}J\left(v,p-\frac\eta2\right)\,\d v.
\eeqlb
Substituting the integral representation of $K_0$ in \eqref{k0integral}, letting $r = 1/y$, and evaluating the resulting Gamma integral over $r$, we obtain
\beqlb\label{J_eval}
J(v,p-\frac\eta2) \ar=\ar \frac12\int_0^\infty t^{-1}\e^{-t} \left[ \int_0^\infty r^{p-\frac\eta2-1}\e^{-r\left(\frac{1}{2}+\frac{v}{2t}\right)}\,\d r \right] \d t \cr\cr
\ar=\ar 2^{p-\frac\eta2-1}\Gamma\left(p-\frac\eta2\right)\int_0^\infty t^{p-\frac\eta2-1}\e^{-t}(t+v)^{-p+\frac\eta2}\,\d t\cr\cr
\ar =\ar 2^{p-\frac\eta2-1}\Gamma\left(p-\frac\eta2\right)\int_0^\infty s^{p-\frac\eta2-1}(1+s)^{-p+\frac\eta2}\e^{-vs}\,\d s.
\eeqlb
Substituting \eqref{J_eval} into \eqref{phi0313}, we have
\beqlb\label{eq040902}\ar\ar\int_0^\infty\int_0^\infty  \left(\gamma y+2yv{\lambda}^{-\beta} \right)^{-p}G(y,v)\,\d v\,\d y\cr\cr
\ar=\ar 2^{p-2}\Gamma\left(p-\frac\eta2\right)\int_0^\infty\left(\gamma +2v{\lambda}^{-\beta} \right)^{-p}\,v^{\frac{\eta}{2}-1} \e^{-v}\int_0^\infty s^{p-\frac\eta2-1}(1+s)^{\frac\eta2-p}\e^{-vs}\,\d s\,\d v\cr\cr
\ar=\ar 2^{p-2}\Gamma\left(p-\frac\eta2\right)\int_0^\infty\,w^{\frac{\eta}{2}-1} \e^{-w}\int_0^\infty \left[\gamma (1+s) +2w{\lambda}^{-\beta} \right]^{-p}s^{p-\frac\eta2-1}\,\d s\,\d w,\eeqlb
where the last integral follows by a change of variable $w=(1+s)v.$ The inner integral on $s$ matches the generalized Beta integral identity $$\int_0^\infty x^{a-1}(A+Bx)^{-b}\d x = A^{a-b}B^{-a} \frac{\Gamma(a)\Gamma(b-a)}{\Gamma(b)}.$$ Setting the constants as $A = \gamma + 2w\lambda^{-\beta}$ and $B = \gamma$, it yields
\beqlb\label{eq441}
\int_0^\infty \left[\gamma(1+s) +2w{\lambda}^{-\beta} \right]^{-p}s^{p-\frac\eta2-1}\,\d s= \gamma^{-p+\frac\eta2}\left(\gamma +2w{\lambda}^{-\beta}\right)^{-\frac\eta2}\frac{\Gamma\left(p-\frac \eta2\right)\Gamma\left(\frac \eta2\right)}{\Gamma(p)}.\eeqlb
Substituting \eqref{eq441} into \eqref{eq040902} and let $w=\frac{\gamma\lambda^\beta}{2}t,$ we get
\beqnn
\ar\ar\int_0^\infty\int_0^\infty  \left(\gamma y+2yv{\lambda}^{-\beta} \right)^{-p}G(y,v)\,\d v\,\d y\cr\cr
\ar=\ar 2^{p-2}\gamma^{-p}\frac{\left[\Gamma\left(p-\frac \eta2\right)\right]^2\Gamma\left(\frac \eta2\right)}{\Gamma(p)}\int_0^\infty\,\left(\frac{\gamma\lambda^\beta}{2}\right)^{\frac\eta2}t^{\frac{\eta}{2}-1} \e^{-\frac{\gamma\lambda^\beta}{2}t} \left(1+t\right)^{-\frac\eta2}\d t\cr\cr
\ar=\ar 2^{p-2}\gamma^{-p}\frac{\left[\Gamma\left(p-\frac \eta2\right)\Gamma\left(\frac \eta2\right)\right]^2}{\Gamma(p)}\left(\frac{\gamma \lambda^\beta}{2}\right)^{\frac\eta2}U \left( \frac\eta2, 1, \frac{\gamma  \lambda^\beta}{2} \right).\eeqnn
Thus, we have proved  \eqref{phimoment}.

Since $\left(\gamma  y+2yv{\lambda}^{-\beta} \right)^{-p}$ increases in $\lambda$, applying the monotone convergence theorem to the above, we have
\beqnn
\ar\ar\int_0^\infty\int_0^\infty  \left(\gamma y \right)^{-p}G(y,v)\,\d v\,\d y\cr\cr
\ar=\ar\lim_{\lambda\rightarrow\infty} 2^{p-2}\gamma^{-p}\frac{\left[\Gamma\left(p-\frac \eta2\right)\Gamma\left(\frac \eta2\right)\right]^2}{\Gamma(p)}\left(\frac{\gamma \lambda^\beta}{2}\right)^{\frac\eta2}U \left( \frac\eta2, 1, \frac{\gamma  \lambda^\beta}{2} \right).\eeqnn
On the other hand, the classical asymptotic expansion of the Kummer function guarantees that $U(a, b, r) \sim r^{-a}$ as $r \to \infty$. Therefore,  $$\lim_{\lambda \to \infty}\left(\frac{\gamma \lambda^\beta}{2}\right)^{\frac\eta2}U \left( \frac\eta2, 1, \frac{\gamma  \lambda^\beta}{2} \right)=1.$$   Letting $\lambda\rightarrow\infty$ in \eqref{phimoment}, we obtain \eqref{psimoment}.
\qed

\medskip

\noindent{\it Proof of Proposition \ref{keyprop}.}
Let $p = \frac{q}{\beta} + \eta > \frac{\eta}{2}$. We first compute the moment generating function for $\nu_\Phi$.  By substituting the density \eqref{nu_phi_def} into the integral, we have
{\small\beqnn
\int_{-\infty}^\infty \e^{q x} \, \nu_{\Phi} (\d x) = \int_{-\infty}^\infty \int_0^{\gamma^{-1/\beta}} \e^{qx} \left[ \frac{\beta^2 \lambda^\beta}{2} \e^{-\beta(1-\eta)x} w^{-1} G\left(\e^{-\beta x} w^{\beta},\frac{\lambda^\beta}{2}(w^{-\beta}-\gamma)\right) \right] \d w \d x.
\eeqnn}
To evaluate this integral, we apply a change of variables $(x, w) \mapsto (y, v)$ given by
\beqnn
y = \e^{-\beta x} w^\beta \quad \text{and} \quad v = \frac{\lambda^\beta}{2}(w^{-\beta}-\gamma),
\eeqnn
and find its Jacobian determinant. The partial derivatives are as follows:
\beqnn
\frac{\partial y}{\partial x} = -\beta \e^{-\beta x} w^\beta, \quad \frac{\partial y}{\partial w} = \beta \e^{-\beta x} w^{\beta-1}, \quad \frac{\partial v}{\partial x} = 0, \quad \frac{\partial v}{\partial w} = -\frac{\beta \lambda^\beta}{2} w^{-\beta-1}.
\eeqnn
Then, the absolute value of the Jacobian is
\beqnn
\left| \frac{\partial(y,v)}{\partial(x,w)} \right| = \left| \left(-\beta \e^{-\beta x} w^\beta\right) \left(-\frac{\beta \lambda^\beta}{2} w^{-\beta-1}\right) - 0 \right| = \frac{\beta^2 \lambda^\beta}{2} \e^{-\beta x} w^{-1}.
\eeqnn
Thus,
\beqnn
\d y \d v = \left| \frac{\partial(y,v)}{\partial(x,w)} \right| \d x \d w = \frac{\beta^2 \lambda^\beta}{2} \e^{-\beta x} w^{-1} \d x \d w.
\eeqnn
Since $w^{-\beta} = \gamma + 2v\lambda^{-\beta}$ and $\e^{-\beta x} = y w^{-\beta}$, we have
\beqnn
\e^{q x} \left[ \frac{\beta^2 \lambda^\beta}{2} \e^{-\beta(1-\eta)x} w^{-1} \right] \d x \d w
\ar=\ar \e^{(q + \beta\eta)x} \left( \frac{\beta^2 \lambda^\beta}{2} \e^{-\beta x} w^{-1} \d x \d w \right) \cr\cr
\ar=\ar (\e^{-\beta x})^{-(q/\beta + \eta)} \d y \d v \cr\cr
\ar=\ar (y w^{-\beta})^{-p} \d y \d v \cr\cr
\ar=\ar\left[ y \left( \gamma + 2v\lambda^{-\beta} \right) \right]^{-p} \d y \d v.
\eeqnn
Substituting the explicit definition of $G(y,v)$ and collecting all terms involving $y$, the original expectation translates into a cleanly separated double integral.
{\small\beqlb\label{eq040701}
\int_{-\infty}^\infty \e^{q x} \, \nu_{\Phi} (\d x)
\ar =\ar \int_0^\infty\int_0^\infty (\gamma y+2yv{\lambda}^{-\beta})^{-\frac q\beta-\eta}G(y,v)\,\d v \,\d y \cr\cr
\ar=\ar2^{\frac{q + {\beta\eta}}{\beta} - 2} \gamma^{- \frac{q + {\beta\eta}}{\beta}} \frac{ \left[\Gamma \left( \frac\eta2 \right)\Gamma \left( \frac{q + {\beta\eta}}{\beta} - \frac\eta2 \right)\right]^2}{\Gamma \left( \frac{q + {\beta\eta}}{\beta} \right)} \left( \frac{\gamma \lambda^\beta}{2} \right)^{\frac\eta2} U \left( \frac\eta2, 1, \frac{\gamma  \lambda^\beta}{2} \right),
\eeqlb}
where the last equality follows from Lemma \ref{keyprop0}.

Since $\left(\gamma y+2yv{\lambda}^{-\beta} \right)^{-p} G(y,v)$ increases in $\lambda$, by the monotone convergence theorem, we obtain,
\beqnn
\lim_{\lambda \to \infty} \int_{-\infty}^\infty \e^{q x} \, \nu_{\Phi} (\d x) \ar=\ar \lim_{\lambda \to \infty} \int_0^\infty\int_0^\infty \left(\gamma y+2yv{\lambda}^{-\beta} \right)^{-p} G(y,v)\,\d v\,\d y \cr\cr
\ar=\ar \int_0^\infty\int_0^\infty (\gamma y)^{-p} G(y,v)\,\d v\,\d y.
\eeqnn
 Combining this result with \eqref{psimoment}, we get,
\beqnn
\int_{-\infty}^\infty \e^{q x} \, \nu_{\Psi} (\d x) = 2^{p-2}\gamma^{-p}\frac{\left[\Gamma\left(p-\frac \eta2\right)\Gamma\left(\frac \eta2\right)\right]^2}{\Gamma(p)}.
\eeqnn
This establishes \eqref{psimoment1} and completes the proof.
\qed

\begin{corollary}\label{cor0409} Suppose that $\eta<\frac2\beta.$ For any $\lambda,z>0,$
\beqnn\frac{\Phi(z,\lambda)}{\Psi(z)}=\left(\frac{\gamma }{2}\right)^{\frac{\eta}{2}}U\left(\frac{\eta}{2},1,\frac{\gamma \lambda^{\beta}}{2}\right)\lambda^{\frac{\eta\beta}{2}}.\eeqnn
\end{corollary}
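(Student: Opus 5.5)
The plan is to deduce the corollary directly from Proposition~\ref{keyprop} through the representation \eqref{phi_psi_def}. Write $C(\lambda)=\left(\frac{\gamma\lambda^\beta}{2}\right)^{\frac\eta2}U\left(\frac\eta2,1,\frac{\gamma\lambda^\beta}{2}\right)$. This is exactly $\left(\frac{\gamma}{2}\right)^{\frac\eta2}U\left(\frac\eta2,1,\frac{\gamma\lambda^\beta}{2}\right)\lambda^{\frac{\eta\beta}{2}}$, the claimed ratio. Comparing \eqref{phimoment1} and \eqref{psimoment1} gives
\[
\int_{-\infty}^{\infty}\e^{qx}\,\nu_\Phi(\d x)=C(\lambda)\int_{-\infty}^{\infty}\e^{qx}\,\nu_\Psi(\d x)\qquad\text{for all } q>-\frac{\beta\eta}{2}.
\]

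\emph{Step 1: the two measures are proportional.} Both $\nu_\Phi$ and $\nu_\Psi$ are nonnegative and have finite moment generating functions on the open half-line $q\in(-\beta\eta/2,\infty)$. Fix $q_0$ in this interval and tilt both measures by $\e^{q_0x}$, giving finite measures $\tilde\nu_\Phi,\tilde\nu_\Psi$. Their moment generating functions are finite and coincide (up to the factor $C(\lambda)$) on a neighbourhood of $0$. By the uniqueness theorem for Laplace transforms finite on an open interval, $\tilde\nu_\Phi=C(\lambda)\tilde\nu_\Psi$. Hence $\nu_\Phi=C(\lambda)\,\nu_\Psi$ as measures on $\mathbb R$.

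\emph{Step 2: integrate the common integrand.} Substituting this identity into \eqref{phi_psi_def} gives $\Phi(z,\lambda)=C(\lambda)\Psi(z)$ for every $z>0$. This holds provided both integrals are finite and $\Psi(z)>0$. Positivity is clear, since the integrand $(1-\e^{-z\e^x})\e^{-\beta\eta x}$ is strictly positive and $\nu_\Psi$ has a positive density. For finiteness, I bound the integrand by $\min(1,z\e^x)\e^{-\beta\eta x}$. As $x\to+\infty$ this is $\e^{-\beta\eta x}$, which corresponds to $q=-\beta\eta$. As $x\to-\infty$ it is at most $z\e^{(1-\beta\eta)x}$, which corresponds to $q=1-\beta\eta$. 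Both exponents must exceed $-\beta\eta/2$. The second does, because $\eta<2/\beta$ gives $1-\beta\eta>-\beta\eta/2$. The first, however, equals $-\beta\eta<-\beta\eta/2$, so the tail at $+\infty$ needs separate care.

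\emph{Step 3 (main obstacle): the tail at $+\infty$.} This is where I expect the difficulty. I would not use the moment generating function for this tail. Instead I return to the original form $\Psi(z)=\int\!\!\int(1-\e^{-z(\gamma y)^{-1/\beta}})G\,\d v\,\d y$, where $x\to+\infty$ corresponds to $y\to0$. For small $y$, the factor $\exp(-1/(2y))$ in $G$ is super-exponentially small, so $\nu_\Psi$ has a density decaying faster than any exponential as $x\to+\infty$, and finiteness follows. For $\Phi$ the same argument works, since $y=\e^{-\beta x}w^\beta\le \gamma^{-1}\e^{-\beta x}$. Alternatively, finiteness of $\Phi$ follows from $\Phi\le\Psi$, because the integrand of $\Phi$ is dominated pointwise by that of $\Psi$. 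With both quantities finite and $\Psi(z)>0$, dividing gives the stated formula for $\Phi(z,\lambda)/\Psi(z)$, which is independent of $z$.
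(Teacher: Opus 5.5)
Your proposal is correct and is essentially the paper's proof. The paper likewise obtains $\nu_\Phi=\bigl(\frac{\gamma\lambda^\beta}{2}\bigr)^{\frac\eta2}U\bigl(\frac\eta2,1,\frac{\gamma\lambda^\beta}{2}\bigr)\nu_\Psi$ from Proposition~\ref{keyprop} by uniqueness of moment generating functions finite near $0$, and then integrates the common integrand in \eqref{phi_psi_def}; it normalizes by the total masses at $q=0$, so your tilt is unnecessary since $\eta>0$. Your finiteness and positivity check goes beyond the paper, but the tail at $+\infty$ is not a real obstacle: since $\e^{-\beta\eta x}\le 1$ for $x\ge 0$, that part of the integral is bounded by the finite total mass $\nu_\Psi(\mathbb{R})$ (the case $q=0$).
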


\begin{proof}
Since $\Psi(z)$ and $\Phi(z, \lambda)$ can be rewritten as integrals of a common integrand against the measures $\nu_{\Phi}$ and $\nu_{\Psi}$. It is sufficient to show that
\begin{equation}\label{measuregoal}
\nu_{\Phi}(\d x) = \left(\frac{\gamma \lambda^{\beta}}{2}\right)^{\frac{\eta}{2}}U\left(\frac{\eta}{2},1,\frac{\gamma \lambda^{\beta}}{2}\right) \nu_\Psi(\d x).
\end{equation}

By letting $q=0$ in the moment generating functions \eqref{phimoment1} and \eqref{psimoment1}, we obtain the total masses of the two measures:
\beqnn
\nu_\Phi(\mbb R) \ar=\ar 2^{\eta - 2} \gamma^{-\eta} \frac{ \left[\Gamma \left( \frac\eta2 \right)\right]^4}{\Gamma \left( \eta \right)} \left( \frac{\gamma \lambda^\beta}{2} \right)^{\frac\eta2} U \left( \frac\eta2, 1, \frac{\gamma \lambda^\beta}{2} \right), \cr\cr
\nu_\Psi(\mbb R) \ar=\ar 2^{\eta - 2} \gamma^{-\eta} \frac{ \left[\Gamma \left( \frac\eta2 \right)\right]^4}{\Gamma \left( \eta \right)}.
\eeqnn
Comparing \eqref{phimoment1} and \eqref{psimoment1} with these total masses, it is evident that for any $q \in \left(-\frac{\beta\eta}{2}, \infty\right)$, the moment generating functions of the normalized probability measures $\nu_{\Phi}/\nu_{\Phi}(\mbb R)$ and $\nu_{\Psi}/\nu_{\Psi}(\mbb R)$ are identical.

Since a probability measure is uniquely determined if its moment generating function exists in a neighborhood of zero (see, e.g., \cite[Theorem 30.1]{billingsley}), the normalized probability measures $\nu_{\Phi}/\nu_{\Phi}(\mbb R)$ and $\nu_{\Psi}/\nu_{\Psi}(\mbb R)$ are identical. Multiplying both sides by their respective total masses, and noting the exact ratio $\nu_\Phi(\mbb R) / \nu_\Psi(\mbb R) = \left(\frac{\gamma \lambda^\beta}{2}\right)^{\frac\eta2} U \left( \frac\eta2, 1, \frac{\gamma \lambda^\beta}{2} \right)$, directly yields \eqref{measuregoal}.

\qed
\end{proof}

\begin{lemma}\label{lem_Gt_asymp}
In the weakly subcritical regime $({\bf m}\in(-\sigma^2,0))$, let the time-dependent function $G_t(y, v)$ be defined as:
\beqlb\label{Gt_def_time}
G_t(y, v) := 2^{\frac{\eta}{2}-1} y^{\frac{\eta}{2}-1} v^{\frac{\eta}{2}-1} \e^{- \frac{{\bf m}^2}{2\sigma^2}t} \e^{-\frac{1}{2y} - v} \theta\left( \sqrt{\frac{2v}{y}}, \frac{\sigma^2\beta^2}{4}t \right),
\eeqlb
where $\theta(r, t)$ is defined in Proposition \ref{jtpdf}. Then, for any $y>0$ and $v>0$,
\beqnn
\lim _{t \rightarrow \infty} t^{3/2} \e^{\frac{{\bf m}^2}{2 \sigma^2} t} G_t(y, v) = \frac{4\sqrt 2}{\sigma^3\beta^3\sqrt{ \pi }} G(y, v),
\eeqnn
where $G(y,v)$ is the time-independent kernel defined in \eqref{G_def}. Furthermore, there exists a constant $C' > 0$ such that for all $t \ge 1$, the scaled function satisfies the uniform bound:
\beqnn
\left| t^{3/2} \e^{\frac{{\bf m}^2}{2 \sigma^2} t} G_t(y, v) \right| \le C' G(y,v).
\eeqnn
\end{lemma}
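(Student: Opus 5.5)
The plan is to reduce everything to the large-time behaviour of $\theta(r,\tau)$ from Proposition \ref{jtpdf}, evaluated at $r=\sqrt{2v/y}$ and $\tau=\frac{\sigma^2\beta^2}{4}t$. The factor $\e^{-\frac{{\bf m}^2}{2\sigma^2}t}$ in \eqref{Gt_def_time} is cancelled exactly by the normalization $\e^{\frac{{\bf m}^2}{2\sigma^2}t}$. Therefore
\[
t^{3/2}\e^{\frac{{\bf m}^2}{2\sigma^2}t}G_t(y,v)=2^{\frac{\eta}{2}-1}y^{\frac{\eta}{2}-1}v^{\frac{\eta}{2}-1}\e^{-\frac1{2y}-v}\, t^{3/2}\theta\Big(\sqrt{\tfrac{2v}{y}},\tfrac{\sigma^2\beta^2}{4}t\Big).
\]
Comparing with \eqref{G_def}, it suffices to prove two things for each fixed $r>0$. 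First, $t^{3/2}\theta(r,\frac{\sigma^2\beta^2}{4}t)\to \frac{4\sqrt2}{\sigma^3\beta^3\sqrt\pi}K_0(r)$. Second, $|t^{3/2}\theta(r,\frac{\sigma^2\beta^2}{4}t)|\le C' K_0(r)$ for $t\ge1$, with $C'$ independent of $r$.

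\emph{Pointwise limit.} Write $\tau=\frac{\sigma^2\beta^2}{4}t$ and
\[
\tau^{3/2}\theta(r,\tau)=\frac{r}{\sqrt{2\pi^3}}\int_0^\infty \e^{\frac{\pi^2-u^2}{2\tau}}\,\e^{-r\cosh u}\sinh u\;\tau\sin\Big(\frac{\pi u}{\tau}\Big)\,\d u .
\]
As $\tau\to\infty$, the integrand converges pointwise to $\pi u\,\e^{-r\cosh u}\sinh u$. For a domination, use $|\tau\sin(\pi u/\tau)|\le \pi u$ together with $\e^{(\pi^2-u^2)/(2\tau)}\le \e^{\pi^2/(2\tau_1)}$ for $\tau\ge\tau_1:=\frac{\sigma^2\beta^2}{4}$. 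This bounds the integrand by $\pi\e^{\pi^2/(2\tau_1)}\,u\,\e^{-r\cosh u}\sinh u$, which is integrable for every $r>0$ because of the double-exponential decay. Dominated convergence and the first representation in \eqref{k0integral} then give
\[
\tau^{3/2}\theta(r,\tau)\to \frac{\pi}{\sqrt{2\pi^3}}\,r\int_0^\infty u\,\e^{-r\cosh u}\sinh u\,\d u=\frac{1}{\sqrt{2\pi}}K_0(r).
\]
Since $t^{3/2}=(4/(\sigma^2\beta^2))^{3/2}\tau^{3/2}=\frac{8}{\sigma^3\beta^3}\tau^{3/2}$, the limit is $\frac{8}{\sigma^3\beta^3\sqrt{2\pi}}K_0(r)=\frac{4\sqrt2}{\sigma^3\beta^3\sqrt\pi}K_0(r)$, which is the claimed constant.

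\emph{Uniform bound.} The same domination applied without passing to the limit gives, for all $t\ge1$,
\[
|t^{3/2}\theta(r,\tau)|\le \frac{8}{\sigma^3\beta^3}\cdot\frac{\pi\,\e^{\pi^2/(2\tau_1)}}{\sqrt{2\pi^3}}\,r\int_0^\infty u\,\e^{-r\cosh u}\sinh u\,\d u=C'K_0(r).
\]
Here $C'=\frac{8\,\e^{2\pi^2/(\sigma^2\beta^2)}}{\sigma^3\beta^3\sqrt{2\pi}}$, independent of $y$ and $v$. Multiplying by the nonnegative prefactor yields $|t^{3/2}\e^{\frac{{\bf m}^2}{2\sigma^2}t}G_t(y,v)|\le C'G(y,v)$.

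I do not expect a genuine obstacle. The only points needing care are the elementary inequality $|\sin x|\le |x|$, which controls the oscillating factor uniformly in $\tau$, and the fact that the Gaussian factor $\e^{(\pi^2-u^2)/(2\tau)}$ is bounded by a constant for $\tau$ bounded below. Both are what make the bound uniform in $t\ge 1$ and in $(y,v)$.
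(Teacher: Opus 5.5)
Your proposal is correct and follows essentially the same route as the paper: cancel the exponential factor, then pull the factor $\pi u/\tau$ out of $\sin(\pi u/\tau)$. Next dominate the integrand using $|\sin x|\le |x|$ and $\e^{(\pi^2-u^2)/(2\tau)}\le \e^{\pi^2/(2\tau_1)}$, and identify the limit via $K_0(r)=r\int_0^\infty u\,\e^{-r\cosh u}\sinh u\,\d u$. Your explicit constant $C'=\frac{8\,\e^{2\pi^2/(\sigma^2\beta^2)}}{\sigma^3\beta^3\sqrt{2\pi}}$ coincides with the paper's $\frac{4\sqrt2\,\e^{\pi^2/(2\tau_1)}}{\sigma^3\beta^3\sqrt\pi}$.
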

\begin{proof}
Let $\tau_t = \frac{\sigma^2\beta^2}{4}t$.  Multiplying $G_t(y, v)$ by $t^{3/2} \e^{\frac{{\bf m}^2}{2 \sigma^2} t}$, the exponential drift term $\e^{-\frac{{\bf m}^2}{2\sigma^2}t}$ cancels exactly:
\beqlb\label{26040901}
t^{3/2} \e^{\frac{{\bf m}^2}{2 \sigma^2} t} G_t(y, v) = 2^{\frac{\eta}{2}-1} y^{\frac{\eta}{2}-1} v^{\frac{\eta}{2}-1} \e^{-\frac{1}{2y} - v} \left[ t^{3/2} \theta\left( \sqrt{\frac{2v}{y}}, \tau_t \right) \right].
\eeqlb
To establish convergence and uniform bounds, we analyze the remaining component $t^{3/2} \theta(r, \tau_t)$ with $r = \sqrt{2v/y}$. By multiplying and dividing the integrand of $\theta(r, \tau_t)$ by $\frac{\pi u}{\tau_t}$, we extract $t^{3/2}$:
\beqnn
t^{3/2} \theta(r, \tau_t) = \frac{4\sqrt{2} r}{\sigma^3 \beta^3 \sqrt{\pi}} \int_0^{\infty} u \e^{\frac{\pi^2-u^2}{2 \tau_t}} \e^{-r \cosh u}\sinh (u) \left[ \frac{\sin\left(\frac{\pi u}{\tau_t}\right)}{\frac{\pi u}{\tau_t}} \right] \d u.
\eeqnn
To evaluate the limit, we note that $\big|\frac{\sin\epsilon}{\epsilon}\big| \le 1$ for all $\epsilon > 0$, and the exponential term satisfies $\e^{\frac{\pi^2 - u^2}{2\tau_t}} \le \e^{\frac{\pi^2}{2\tau_1}} := C$ for any $t \ge 1$. Consequently, the absolute value of the integrand is uniformly bounded by a time-independent integrable function:
\beqnn
\left| u \e^{\frac{\pi^2-u^2}{2 \tau_t}} \e^{-r \cosh u}\sinh (u) \left[ \frac{\sin\left(\frac{\pi u}{\tau_t}\right)}{\frac{\pi u}{\tau_t}} \right] \right| \le C u \e^{-r \cosh u}\sinh(u).
\eeqnn
Since $\lim_{\epsilon \to 0}\frac{\sin \epsilon}{\epsilon} = 1$ and $\lim_{t\to\infty}\e^{\frac{\pi^2-u^2}{2 \tau_t}}=1$, the dominated convergence theorem applies, allowing us to pass the limit inside the integral:
\beqnn
\lim_{t \to \infty} t^{3/2} \theta(r, \tau_t) = \frac{4\sqrt{2} r}{\sigma^3\beta^3\sqrt{\pi}} \int_0^\infty u \e^{-r \cosh u} \sinh (u) \d u = \frac{4\sqrt{2}}{\sigma^3\beta^3\sqrt{\pi}} K_0(r).
\eeqnn
Substituting this exact limit back into \eqref{26040901} yields the time-independent kernel $G(y,v)$:
\beqnn
\lim_{t \to \infty} t^{3/2} \e^{\frac{{\bf m}^2}{2\sigma^2} t} G_t(y, v) \ar=\ar 2^{\frac{\eta}{2}-1} y^{\frac{\eta}{2}-1} v^{\frac{\eta}{2}-1} \e^{-\frac{1}{2y} - v} \left[ \frac{4\sqrt{2}}{\sigma^3\beta^3\sqrt{\pi}} K_0\left(\sqrt{\frac{2v}{y}}\right) \right] \cr\cr
\ar=\ar \frac{4\sqrt{2}}{\sigma^3\beta^3\sqrt{\pi}} G(y, v).
\eeqnn

Furthermore, integrating the explicit upper bound of the integrand over $u \in (0, \infty)$ provides a global, time-independent bound for the entire temporal component:
\beqnn
\left| t^{3/2} \theta(r, \tau_t) \right| \ar\le\ar \frac{4\sqrt{2} r}{\sigma^3 \beta^3 \sqrt{\pi}} \int_0^{\infty} C u \e^{-r \cosh u}\sinh (u) \d u \cr\cr
\ar=\ar \frac{4\sqrt{2} C r}{\sigma^3 \beta^3 \sqrt{\pi}} \left( \frac{K_0(r)}{r} \right) \cr\cr
\ar=\ar \frac{4\sqrt{2} C}{\sigma^3 \beta^3 \sqrt{\pi}} K_0(r) := C' K_0(r).
\eeqnn
By replacing the temporal component with this uniform bound, we directly obtain the global upper bound for the scaled kernel:
\beqnn
t^{3/2} \e^{\frac{{\bf m}^2}{2 \sigma^2} t} G_t(y, v) \ar=\ar 2^{\frac{\eta}{2}-1} y^{\frac{\eta}{2}-1} v^{\frac{\eta}{2}-1} \e^{-\frac{1}{2y} - v} \left[ t^{3/2} \theta\left(\sqrt{\frac{2v}{y}}, \tau_t\right) \right] \cr\cr
\ar\le\ar 2^{\frac{\eta}{2}-1} y^{\frac{\eta}{2}-1} v^{\frac{\eta}{2}-1} \e^{-\frac{1}{2y} - v} \left[ C' K_0\left(\sqrt{\frac{2v}{y}}\right) \right] \cr\cr
\ar=\ar C' G(y,v).
\eeqnn
\qed
\end{proof}
\begin{lemma}\label{lem_limit_F}
In the weakly subcritical regime $({\bf m}\in(-\sigma^2,0)).$ Using the time-dependent function $G_t(y, v)$ defined in \eqref{Gt_def_time}, let $\nu_{\Phi, t}$ be a time-dependent measure on $\mbb{R}$ with its density defined precisely by $G_t$:
\beqlb\label{nu_phi_t_def}
\frac{\nu_{\Phi, t}(\d x)}{\d x} := \frac{\beta^2 \lambda^\beta}{2} \int_0^{\gamma^{-1/\beta}} \e^{-\beta(1-\eta)x} w^{-1} G_t\left(\e^{-\beta x}w^\beta, \frac{\lambda^\beta}{2}(w^{-\beta}-\gamma)\right) \d w.
\eeqlb
Then $F(t) = \mbb{E}^z[1-\e^{-\lambda Z_t}]$ can be exactly represented as an integral with respect to this measure:
\beqnn
F(t) = \int_{-\infty}^\infty \bigl(1 - \e^{-z \e^x}\bigr) \e^{-\beta\eta x}\,\nu_{\Phi, t}(\d x).
\eeqnn
Furthermore, as $t \to \infty$, the scaled density of the measure converges pointwise:
\beqnn
\lim _{t \rightarrow \infty} t^{3/2} \e^{\frac{{\bf m}^2}{2 \sigma^2} t} \frac{\nu_{\Phi, t}(\d x)}{\d x} = \frac{4\sqrt 2}{\sigma^3\beta^3\sqrt{ \pi }} \frac{\nu_\Phi(\d x)}{\d x},
\eeqnn
and consequently,
\beqlb\label{F_limit_conv}
\lim _{t \rightarrow \infty} t^{3/2} \e^{\frac{{\bf m}^2}{2 \sigma^2} t} F(t) = \frac{4\sqrt 2}{\sigma^3\beta^3\sqrt{ \pi }} \Phi(z, \lambda).
\eeqlb
\end{lemma}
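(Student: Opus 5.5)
Write $\tau=\tau_t=\frac{\sigma^2\beta^2}{4}t$. By Theorem \ref{firstlem}, $F(t)=\mathbb E[1-\e^{-z\xi_t(\lambda)}]$, and $\xi_t(\lambda)=[\gamma A^{(\eta)}_{\tau}+\lambda^{-\beta}\e^{2X}]^{-1/\beta}$ with $X=\eta\tau+B^{(e)}_{\tau}$. The plan is to compute this expectation with the joint density of Proposition \ref{jtpdf} at time $\tau$ and then change variables until the integrand has the form in \eqref{nu_phi_t_def}.

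\textbf{Step 1: reduce to $(y,v)$ coordinates.} Substitute $y'=A$ and $v=\e^{2x}/(2A)$, so that $\e^{x}/A=\sqrt{2v/A}$. Then
\[
\gamma A+\lambda^{-\beta}\e^{2x}=\gamma y'+2y'v\lambda^{-\beta},
\qquad
\exp\Big(-\frac{1+\e^{2x}}{2y'}\Big)=\e^{-\frac1{2y'}-v}.
\]
Also $\e^{\eta x}=(2y'v)^{\eta/2}$, and $\d x=\d v/(2v)$. Collecting terms gives the density $2^{\frac\eta2-1}y'^{\frac\eta2-1}v^{\frac\eta2-1}\e^{-\frac1{2y'}-v}\,\e^{-\eta^2\tau/2}\,\theta(\sqrt{2v/y'},\tau)$.

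The exponent matches $\e^{-{\bf m}^2t/(2\sigma^2)}$, because
\[
\frac{\eta^2}{2}\tau=\frac{4{\bf m}^2}{2\beta^2\sigma^4}\cdot\frac{\sigma^2\beta^2}{4}t=\frac{{\bf m}^2}{2\sigma^2}t .
\]
Hence
\[
F(t)=\int_0^\infty\!\!\int_0^\infty\Big(1-\e^{-z(\gamma y+2yv\lambda^{-\beta})^{-1/\beta}}\Big)G_t(y,v)\,\d v\,\d y,
\]
which is the time-dependent analogue of $\Phi$. I will double-check the powers of 2 here.

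\textbf{Step 2: pass to the $x$-representation.} Apply the inverse of the change of variables used in the proof of Proposition \ref{keyprop}, namely $y=\e^{-\beta x}w^\beta$ and $v=\frac{\lambda^\beta}2(w^{-\beta}-\gamma)$, with $w\in(0,\gamma^{-1/\beta})$. The Jacobian is $\frac{\beta^2\lambda^\beta}{2}\e^{-\beta x}w^{-1}$. Moreover $(\gamma y+2yv\lambda^{-\beta})^{-1/\beta}=\e^{x}$, and the factor $\e^{-\beta x}$ from the Jacobian splits as $\e^{-\beta(1-\eta)x}\e^{-\beta\eta x}$. This yields exactly $F(t)=\int(1-\e^{-z\e^x})\e^{-\beta\eta x}\nu_{\Phi,t}(\d x)$. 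The same computation with $G$ in place of $G_t$ recovers \eqref{phi_psi_def}.

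\textbf{Step 3: pointwise convergence of the density.} For fixed $x$ and $w$, Lemma \ref{lem_Gt_asymp} gives convergence of the integrand in \eqref{nu_phi_t_def}, dominated by $C'$ times the corresponding $G$-integrand. That dominating function is integrable in $w$ because $\nu_\Phi$ has a finite density. This finiteness holds almost everywhere, which suffices, since $\nu_\Phi$ is a finite measure by Proposition \ref{keyprop} with $q=0$. Dominated convergence in $w$ then gives the pointwise limit of the scaled density of $\nu_{\Phi,t}$.

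\textbf{Step 4: convergence of $F$, the main obstacle.} For \eqref{F_limit_conv}, the cleanest route is dominated convergence directly in the $(y,v)$ form. The integrand converges by Lemma \ref{lem_Gt_asymp} and is bounded by $C'(1-\e^{-z(\cdot)^{-1/\beta}})G(y,v)$ for $t\ge1$ (note $\tau_t$ versus $t$ is only a change of constants). It therefore suffices that $\Phi(z,\lambda)<\infty$. This follows from \eqref{phi_psi_def} using $1-\e^{-z\e^x}\le \min(1,z\e^x)$, so the integrand is at most $z\e^{(1-\beta\eta)x}$ for $x<0$ and $\e^{-\beta\eta x}$ for $x>0$. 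Both are finite against $\nu_\Phi$ by Proposition \ref{keyprop}:
\begin{itemize}
\item the first needs $q=1-\beta\eta>-\beta\eta/2$, which is exactly the weakly subcritical condition $\eta<2/\beta$;
\item the second needs $q=-\beta\eta>-\beta\eta/2$, which fails, but for $x>0$ we instead bound by $1$, using the finite total mass.
\end{itemize}
With this integrable majorant secured, dominated convergence completes the proof.
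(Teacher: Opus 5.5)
Your proposal is correct and follows the paper's route. The paper performs the single substitution $(y,u)\mapsto(x,w)$, which is exactly the composition of your two substitutions, and then uses the bound $C'G$ from Lemma \ref{lem_Gt_asymp} with dominated convergence. Your additions are welcome refinements: the intermediate $(y,v)$ form makes the limit equal to $\Phi(z,\lambda)$ directly by definition, and your check that $\Phi(z,\lambda)<\infty$ fills in a step the paper only asserts. That check uses Proposition \ref{keyprop} at $q=1-\beta\eta$, which needs exactly $\eta<2/\beta$, and it in fact works on all of $\mathbb R$ without splitting at $x=0$. Likewise, the density of $\nu_\Phi$ is finite at every $x$ by direct inspection of its $v$-integrand, so your almost-everywhere convergence upgrades to the everywhere-pointwise claim.
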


\begin{proof}
By Theorem \ref{firstlem} and the joint density function provided in Proposition \ref{jtpdf}, $F(t)$ is given by:
\beqnn
F(t)\ar =\ar \int_{-\infty}^{\infty}\int_0^\infty \left[1-\exp\left(-z\left(\gamma y+{\lambda}^{-\beta}\e^{2 u} \right)^{-1/\beta}\right)\right]\cr\cr\ar\ar\qquad \e^{\eta u- \frac{\eta^2\sigma^2\beta^2}{8}t} \frac{1}{y} \exp \left(-\frac{1+\e^{2 u}}{2 y}\right) \theta\left(\frac{\e^u}{y}, \frac{\sigma^2\beta^2}{4}t\right) \d y \d u.
\eeqnn
We evaluate the integral by applying the change of variables $(y, u) \mapsto (x, w)$ directly:
\beqnn
y = \e^{-\beta x} w^\beta, \qquad u = \frac{\beta}{2}\ln\lambda - \frac{\beta}{2}x + \frac{1}{2}\ln(1-\gamma w^\beta),
\eeqnn
where the new domain is $x \in \mathbb{R}$ and $w \in (0, \gamma^{-1/\beta})$. The absolute value of the Jacobian determinant is computed as
\beqnn
\left| \frac{\partial(y,u)}{\partial(x,w)} \right| \ar=\ar \left| \left(-\beta \e^{-\beta x} w^\beta\right) \left(-\frac{\gamma \beta w^{\beta-1}}{2(1-\gamma w^\beta)}\right) - \left(\beta \e^{-\beta x} w^{\beta-1}\right)\left(-\frac{\beta}{2}\right) \right| \cr\cr
\ar=\ar \frac{\beta^2 \e^{-\beta x} w^{\beta-1}}{2(1-\gamma w^\beta)}.
\eeqnn
Thus, the transformation of the differential elements is given by:
\beqnn
\d y \d u = \left| \frac{\partial(y,u)}{\partial(x,w)} \right| \d x \d w = \frac{\beta^2 \e^{-\beta x} w^{\beta-1}}{2(1-\gamma w^\beta)} \d x \d w.
\eeqnn
Recalling that $-\frac{\eta^2\sigma^2\beta^2}{8} = -\frac{{\bf m}^2}{2\sigma^2}$, let $v = \frac{\e^{2u}}{2y} = \frac{\lambda^\beta}{2}(w^{-\beta}-\gamma)$. We note that $\e^{2u} = 2yv$. We substitute this into the integrand, extract $G_t$, and directly cancel the complexities of the Jacobian:
\beqnn
\ar\ar \e^{\eta u} \frac{1}{y} \exp \left(-\frac{1+\e^{2 u}}{2 y}\right) \e^{-\frac{{\bf m}^2}{2\sigma^2}t} \theta\left(\frac{\e^u}{y}, \frac{\sigma^2\beta^2}{4}t\right) \d y \d u \cr\cr
\ar=\ar (2yv)^{\frac{\eta}{2}} \frac{1}{y} \e^{-\frac{1}{2y} - v} \e^{-\frac{{\bf m}^2}{2\sigma^2}t} \theta\left(\sqrt{\frac{2v}{y}}, \frac{\sigma^2\beta^2}{4}t\right) \d y \d u \cr\cr
\ar=\ar \left[ 2^{\frac{\eta}{2}-1} y^{\frac{\eta}{2}-1} v^{\frac{\eta}{2}-1} \e^{-\frac{1}{2y} - v} \e^{-\frac{{\bf m}^2}{2\sigma^2}t} \theta\left(\sqrt{\frac{2v}{y}}, \frac{\sigma^2\beta^2}{4}t\right) \right] (2v) \, \d y \d u \cr\cr
\ar=\ar G_t(y, v) \cdot (2v) \, \d y \d u \cr\cr
\ar=\ar G_t(y, v) \left[ \lambda^\beta \left(w^{-\beta}-\gamma\right) \right] \left[ \frac{\beta^2 \e^{-\beta x} w^{\beta-1}}{2(1-\gamma w^\beta)} \d x \d w \right] \cr\cr
\ar=\ar G_t(y, v) \left[ \lambda^\beta \frac{1-\gamma w^\beta}{w^\beta} \right] \left[ \frac{\beta^2 \e^{-\beta x} w^{\beta-1}}{2(1-\gamma w^\beta)} \d x \d w \right] \cr\cr
\ar=\ar G_t(y, v) \left( \frac{\beta^2 \lambda^\beta}{2} \e^{-\beta x} w^{-1} \right) \d x \d w.
\eeqnn
Applying this algebraic identity to the original integral, noting from the substitution that $\gamma y + \lambda^{-\beta}\e^{2u} = \e^{-\beta x}$, and factoring $\e^{-\beta x} = \e^{-\beta(1-\eta)x} \e^{-\beta\eta x}$, the transformation completes immediately:
{\small\beqnn
F(t) \ar=\ar \int_{-\infty}^\infty \int_0^{\gamma^{-1/\beta}} \left[1-\exp\left(-z \e^x\right)\right] G_t\left(\e^{-\beta x}w^\beta, \frac{\lambda^\beta}{2}(w^{-\beta}-\gamma)\right) \left(\frac{\beta^2 \lambda^\beta}{2} \e^{-\beta x} w^{-1}\right) \d w \d x \cr\cr
\ar=\ar \int_{-\infty}^\infty \bigl(1 - \e^{-z \e^x}\bigr) \e^{-\beta\eta x} \left[ \frac{\beta^2 \lambda^\beta}{2} \int_0^{\gamma^{-1/\beta}} \e^{-\beta(1-\eta)x} w^{-1} G_t\left(\e^{-\beta x}w^\beta, \frac{\lambda^\beta}{2}(w^{-\beta}-\gamma)\right) \d w \right] \d x.
\eeqnn}
The term in the square brackets is precisely the measure density $\nu_{\Phi, t}(\d x)$ defined in \eqref{nu_phi_t_def}, resulting in the following:
\beqnn
F(t) = \int_{-\infty}^\infty \bigl(1 - \e^{-z \e^x}\bigr) \e^{-\beta\eta x}\,\nu_{\Phi, t}(\d x).
\eeqnn

To analyze the asymptotic behavior of the integral $F(t)$, we first establish the convergence of the scaled measure density. Recall the global uniform bound $t^{3/2} \e^{\frac{{\bf m}^2}{2 \sigma^2} t} G_t(y, v) \le C' G(y,v)$ established in Lemma \ref{lem_Gt_asymp}. Substituting the pointwise limit of $G_t$ into \eqref{nu_phi_t_def} and applying the dominated convergence theorem, we explicitly establish the exact convergence of the density.
{\small\beqnn
\lim _{t \rightarrow \infty} t^{3/2} \e^{\frac{{\bf m}^2}{2 \sigma^2} t} \frac{\nu_{\Phi, t}(\d x)}{\d x}
\ar=\ar \lim _{t \rightarrow \infty} \frac{\beta^2 \lambda^\beta}{2} \int_0^{\gamma^{-1/\beta}} \e^{-\beta(1-\eta)x} w^{-1} \left[ t^{3/2} \e^{\frac{{\bf m}^2}{2 \sigma^2} t} G_t\left(\e^{-\beta x}w^\beta, \frac{\lambda^\beta}{2}(w^{-\beta}-\gamma)\right) \right] \d w \cr\cr
\ar=\ar \frac{4\sqrt{2}}{\sigma^3\beta^3\sqrt{\pi}} \int_0^{\gamma^{-1/\beta}} \frac{\beta^2 \lambda^\beta}{2} \e^{-\beta(1-\eta)x} w^{-1} G\left(\e^{-\beta x}w^\beta, \frac{\lambda^\beta}{2}(w^{-\beta}-\gamma)\right) \d w \cr\cr
\ar=\ar \frac{4\sqrt 2}{\sigma^3\beta^3\sqrt{ \pi }} \frac{\nu_\Phi(\d x)}{\d x}.
\eeqnn}

With the pointwise limit of the scaled density established, we now consider the scaled $F(t)$:
\beqnn
t^{3/2} \e^{\frac{{\bf m}^2}{2 \sigma^2} t} F(t) = \int_{-\infty}^\infty \bigl(1 - \e^{-z \e^x}\bigr) \e^{-\beta\eta x} \left( t^{3/2} \e^{\frac{{\bf m}^2}{2 \sigma^2} t} \frac{\nu_{\Phi, t}(\d x)}{\d x} \right) \d x.
\eeqnn
Using the uniform bound $t^{3/2} \e^{\frac{{\bf m}^2}{2 \sigma^2} t} G_t(y, v) \le C' G(y,v)$ in \eqref{nu_phi_t_def} yields a uniform dominating bound for the scaled density itself:
\beqnn
t^{3/2} \e^{\frac{{\bf m}^2}{2 \sigma^2} t} \frac{\nu_{\Phi, t}(\d x)}{\d x} \le C' \frac{\nu_{\Phi}(\d x)}{\d x}.
\eeqnn
Applying this bound, the absolute value of the entire integrand is bounded by $$C' \bigl(1 - \e^{-z \e^x}\bigr) \e^{-\beta\eta x} \frac{\nu_{\Phi}(\d x)}{\d x}.$$ The integral of this dominant function is precisely $C' \Phi(z, \lambda) < \infty$. Therefore, by the dominated convergence theorem, we pass the limit inside the integral over $x$, giving \eqref{F_limit_conv} and completing the proof.
\qed
\end{proof}
\begin{lemma}\label{lem_survival_prob}
In the weakly subcritical regime $({\bf m}\in(-\sigma^2,0))$, the survival probability of the process satisfies the asymptotic relationship:
\beqnn
\lim _{t \rightarrow \infty}t^{3/2} \e^{\frac{{\bf m}^2}{2 \sigma^2} t}\mbb{P}^z\left(Z_t>0\right) = \frac{4\sqrt 2}{\sigma^3\beta^3\sqrt{ \pi }} \Psi(z).
\eeqnn
\end{lemma}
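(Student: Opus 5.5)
The plan is to repeat the proof of Lemma \ref{lem_limit_F} with the $\lambda$-dependent part of $\xi_t(\lambda)$ removed. By \eqref{2} in Theorem \ref{firstlem} and Proposition \ref{jtpdf}, evaluated at time $\tau_t=\frac{\sigma^2\beta^2}{4}t$, we can write
\beqnn
\mbb{P}^z(Z_t>0)=\int_{-\infty}^{\infty}\int_0^\infty \bigl(1-\e^{-z(\gamma y)^{-1/\beta}}\bigr)\e^{\eta u-\frac{{\bf m}^2}{2\sigma^2}t}\frac1y\exp\Bigl(-\frac{1+\e^{2u}}{2y}\Bigr)\theta\Bigl(\frac{\e^u}{y},\tau_t\Bigr)\d y\,\d u .
\eeqnn
Here we use again that $\frac{\eta^2\sigma^2\beta^2}{8}=\frac{{\bf m}^2}{2\sigma^2}$. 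Since the integrand depends on $u$ only through the density, no $w$-variable is needed. We only substitute $v=\e^{2u}/(2y)$ at fixed $y$. This gives $\d u=\frac{\d v}{2v}$ and $\e^{\eta u}=(2yv)^{\eta/2}$. The same algebra as in Lemma \ref{lem_limit_F} then turns the density factor times $\d u$ into $G_t(y,v)\,(2v)\frac{\d v}{2v}=G_t(y,v)\,\d v$. Hence
\beqnn
\mbb{P}^z(Z_t>0)=\int_0^\infty\int_0^\infty\bigl(1-\e^{-z(\gamma y)^{-1/\beta}}\bigr)G_t(y,v)\,\d v\,\d y,
\eeqnn
which is the exact time-$t$ analogue of $\Psi(z)$.

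Next we multiply by $t^{3/2}\e^{\frac{{\bf m}^2}{2\sigma^2}t}$ and invoke Lemma \ref{lem_Gt_asymp}. It gives pointwise convergence of the scaled kernel to $\frac{4\sqrt2}{\sigma^3\beta^3\sqrt\pi}G(y,v)$, and the domination $C'G(y,v)$ for $t\ge1$. The integrand is therefore dominated by $C'\bigl(1-\e^{-z(\gamma y)^{-1/\beta}}\bigr)G(y,v)$, whose integral is $C'\Psi(z)$. Dominated convergence then yields the claim, provided $\Psi(z)<\infty$.

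The only real point is to check $\Psi(z)<\infty$. One option is to use \eqref{phi_psi_def} and Proposition \ref{keyprop}. A more direct route is the bound $1-\e^{-z(\gamma y)^{-1/\beta}}\le \min\{1,z(\gamma y)^{-1/\beta}\}\le z^{p\beta}(\gamma y)^{-p}$ for any $p\in(0,1/\beta]$. Choosing $p\in(\frac\eta2,\frac1\beta]$ is possible exactly because $\eta<\frac2\beta$ in the weakly subcritical regime. With this choice, \eqref{psimoment} of Lemma \ref{keyprop0} gives
\beqnn
\Psi(z)\le z^{p\beta}\int_0^\infty\int_0^\infty(\gamma y)^{-p}G(y,v)\,\d v\,\d y<\infty .
\eeqnn
This is where the weak subcriticality enters, and it is the step I would verify most carefully. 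One also needs the elementary fact that $\min\{1,s\}\le s^{a}$ for $a\in(0,1]$, applied with $s=z(\gamma y)^{-1/\beta}$ and $a=p\beta$.

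Finally, combining this lemma with \eqref{F_limit_conv}, the normalizing constants cancel. Then \eqref{yglmt} and Corollary \ref{cor0409} give $\mathcal L(z,\lambda)=1-\Phi(z,\lambda)/\Psi(z)$. Since $\frac{\eta}{2}=\frac1\beta(\Theta-\frac{2{\bf m}}{\sigma^2})$ and $1=1+\frac2\beta(\Theta-\frac{{\bf m}}{\sigma^2})$ when $\Theta={\bf m}/\sigma^2$, this matches Theorem \ref{mainthm}.
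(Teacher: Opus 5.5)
Your proof is correct, and it reaches the finite-$t$ identity by a more direct route than the paper's.

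The paper starts from the $\nu_{\Phi,t}$-representation of $F(t)$ in Lemma~\ref{lem_limit_F} and re-parametrizes $w$ by $v$. It then lets $\lambda\to\infty$ to obtain $\mbb{P}^z(Z_t>0)=\int_{-\infty}^\infty(1-\e^{-z\e^x})\e^{-\beta\eta x}\,\nu_{\Psi,t}(\d x)$. This requires passing $\lambda\to\infty$ through both the $v$-integral and the $x$-integral, which the paper attributes to dominated convergence without naming a dominating function.

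You instead start from \eqref{2} and Proposition~\ref{jtpdf} and make the single substitution $v=\e^{2u}/(2y)$ at fixed $y$. This gives $\mbb{P}^z(Z_t>0)=\int_0^\infty\int_0^\infty(1-\e^{-z(\gamma y)^{-1/\beta}})G_t(y,v)\,\d v\,\d y$ exactly. It is the same identity written in the variable $y=\gamma^{-1}\e^{-\beta x}$, but with no $\lambda$-limit to justify. The closing step is the same in both proofs: dominated convergence in $t$, using the pointwise limit and the bound $C'G$ from Lemma~\ref{lem_Gt_asymp}.

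Your explicit check that the dominating function is integrable adds rigor. You show $\Psi(z)<\infty$ via $1-\e^{-s}\le s^{p\beta}$ and \eqref{psimoment} with $p\in(\frac\eta2,\frac1\beta]$. The paper phrases its last step as weak convergence of the scaled measures. However, the test function $(1-\e^{-z\e^x})\e^{-\beta\eta x}$ is unbounded as $x\to-\infty$ when $\beta\eta>1$, so what is really needed is $\Psi(z)<\infty$. The paper leaves this implicit; it would also follow from Proposition~\ref{keyprop} with $q=0$ and $q=1-\beta\eta$. Your argument makes clear that this is exactly where $\eta<\frac2\beta$ is used.
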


\begin{proof}
By definition, the survival probability can be obtained from the fractional expectation taking $\lambda \to \infty$: $\mbb{P}^z(Z_t>0) = \lim_{\lambda \to \infty} F(t)$. Using the exact integral representation of $F(t)$ established in Lemma \ref{lem_limit_F}, we have
\beqnn
\mbb{P}^z(Z_t>0) = \lim_{\lambda \to \infty} \int_{-\infty}^\infty \bigl(1 - \e^{-z \e^x}\bigr) \e^{-\beta\eta x} \nu_{\Phi, t}(\d x).
\eeqnn
To explicitly evaluate this limit, we recall the definition of time-dependent density $\nu_{\Phi, t}$ from \eqref{nu_phi_t_def} and apply the change in variable $v = \frac{\lambda^\beta}{2}(w^{-\beta}-\gamma)$. Noting that $w^\beta = (\gamma + 2v\lambda^{-\beta})^{-1}$ and that the combination of differential and reversing integration limits yields $\frac{\beta^2 \lambda^\beta}{2} w^{-1} \d w = \beta (\gamma + 2v\lambda^{-\beta})^{-1} \d v$, we can rewrite the density as:
\beqnn
\frac{\nu_{\Phi, t}(\d x)}{\d x} \ar=\ar \int_0^{\gamma^{-1/\beta}} \e^{-\beta(1-\eta)x} G_t\left(\e^{-\beta x}w^\beta, \frac{\lambda^\beta}{2}(w^{-\beta}-\gamma)\right) \left( \frac{\beta^2 \lambda^\beta}{2} w^{-1} \d w \right) \cr\cr
\ar=\ar \int_0^\infty \e^{-\beta(1-\eta)x} G_t\left(\e^{-\beta x}(\gamma + 2v\lambda^{-\beta})^{-1}, v\right) \left[ \beta (\gamma + 2v\lambda^{-\beta})^{-1} \right] \d v \cr\cr
\ar=\ar \int_0^\infty \beta \e^{-\beta(1-\eta)x} (\gamma + 2v\lambda^{-\beta})^{-1} G_t\left(\e^{-\beta x}(\gamma + 2v\lambda^{-\beta})^{-1}, v\right) \d v.
\eeqnn

Therefore, by the dominated convergence theorem, we pass the limit $\lambda \to \infty$ inside the integral, yielding the following:
\beqnn
\lim_{\lambda \to \infty} \frac{\nu_{\Phi, t}(\d x)}{\d x} = \int_0^\infty \beta \gamma^{-1} \e^{-\beta(1-\eta)x} G_t\left(\gamma^{-1} \e^{-\beta x}, v\right) \d v := \frac{\nu_{\Psi, t}(\d x)}{\d x}.
\eeqnn
Substituting this measure back, the survival probability becomes an exact expectation free of $\lambda$:
\beqnn
\mbb{P}^z(Z_t>0) = \int_{-\infty}^\infty \bigl(1 - \e^{-z \e^x}\bigr) \e^{-\beta\eta x} \nu_{\Psi, t}(\d x).
\eeqnn
Finally, we apply the temporal scaling $t^{3/2} \e^{\frac{{\bf m}^2}{2 \sigma^2} t}$. By Lemma \ref{lem_Gt_asymp},  $t^{3/2} \e^{\frac{{\bf m}^2}{2\sigma^2}t} G_t(y,v)$ is bounded by a time-independent integrable function and converges pointwise to $\frac{4\sqrt{2}}{\sigma^3\beta^3\sqrt{\pi}} G(y,v)$. Applying this property to the density of $\nu_{\Psi, t}(\d x)$ and applying the dominated convergence theorem, the scaled measure converges weakly to the reference measure $\nu_\Psi$:
\beqnn
\lim _{t \rightarrow \infty} t^{3/2} \e^{\frac{{\bf m}^2}{2 \sigma^2} t} \nu_{\Psi, t}(\d x) = \frac{4\sqrt 2}{\sigma^3\beta^3\sqrt{ \pi }} \nu_\Psi(\d x).
\eeqnn
Substituting this weak convergence directly into the integral representation of the scaled survival probability, we arrive at the final result:
\beqnn
\lim _{t \rightarrow \infty}t^{3/2} \e^{\frac{{\bf m}^2}{2 \sigma^2} t}\mbb{P}^z\left(Z_t>0\right) \ar=\ar \int_{-\infty}^\infty \bigl(1 - \e^{-z \e^x}\bigr)\e^{-\beta\eta x} \left( \lim _{t \rightarrow \infty}t^{3/2} \e^{\frac{{\bf m}^2}{2 \sigma^2} t} \nu_{\Psi, t}(\d x) \right) \cr\cr
\ar=\ar \frac{4\sqrt 2}{\sigma^3\beta^3\sqrt{ \pi }}\int_{-\infty}^\infty \bigl(1 - \e^{-z \e^x}\bigr) \e^{-\beta\eta x}\,\nu_\Psi(\d x) \cr\cr
\ar=\ar \frac{4\sqrt 2}{\sigma^3\beta^3\sqrt{ \pi }}\Psi(z).
\eeqnn

\qed
\end{proof}
\begin{proposition}\label{wkfenzi}
In the weakly subcritical regime $({\bf m}\in(-\sigma^2,0))$,
$$\lim_{t\rightarrow\infty}\mathcal L_t(z,\lambda)= 1 - \left(\frac{\gamma \lambda^{\beta}}{2}\right)^{\frac{\eta}{2}} U\left(\frac{\eta}{2}, 1, \frac{\gamma \lambda^{\beta}}{2}\right).$$
\end{proposition}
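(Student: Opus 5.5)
The argument combines the asymptotics already established. By the definition \eqref{yglmt}, for each $t>0$,
\[
\mathcal L_t(z,\lambda)=1-\frac{F(t)}{\mathbb P^z(Z_t>0)},\qquad F(t)=\mathbb E^z\bigl[1-\e^{-\lambda Z_t}\bigr].
\]
We multiply numerator and denominator by the common factor $t^{3/2}\e^{\frac{{\bf m}^2}{2\sigma^2}t}$. By Lemma \ref{lem_limit_F}, the scaled numerator converges to $\frac{4\sqrt2}{\sigma^3\beta^3\sqrt\pi}\Phi(z,\lambda)$. By Lemma \ref{lem_survival_prob}, the scaled denominator converges to the same constant times $\Psi(z)$. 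The constants cancel, so
\[
\lim_{t\to\infty}\mathcal L_t(z,\lambda)=1-\frac{\Phi(z,\lambda)}{\Psi(z)},
\]
provided $\Psi(z)\in(0,\infty)$.

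We verify this proviso next.
\begin{itemize}
\item Positivity: $G(y,v)>0$ on $(0,\infty)^2$, and the integrand $1-\e^{-z(\gamma y)^{-1/\beta}}$ is strictly positive, so $\Psi(z)>0$.
\item Finiteness: we use the representation \eqref{phi_psi_def}, $\Psi(z)=\int(1-\e^{-z\e^x})\e^{-\beta\eta x}\nu_\Psi(\d x)$. We split the integral at $x=0$. For $x\ge0$, the integrand is bounded by $\e^{-\beta\eta x}$, and $\int \e^{-\beta\eta x}\nu_\Psi(\d x)<\infty$ because $q=-\beta\eta>-\beta\eta/2$ lies in the range of Proposition \ref{keyprop}. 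For $x<0$, the integrand is bounded by $z\e^{(1-\beta\eta)x}$. The exponent $q=1-\beta\eta$ exceeds $-\beta\eta/2$ precisely because $\eta<2/\beta$, which is the weakly subcritical assumption. Hence this MGF is finite too.
\end{itemize}
The same reasoning shows $\Phi(z,\lambda)<\infty$, so the dominated-convergence arguments in the preceding lemmas are legitimate.

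Finally, we invoke Corollary \ref{cor0409}, which gives
\[
\frac{\Phi(z,\lambda)}{\Psi(z)}=\Bigl(\frac\gamma2\Bigr)^{\eta/2}U\Bigl(\frac\eta2,1,\frac{\gamma\lambda^\beta}{2}\Bigr)\lambda^{\eta\beta/2}=\Bigl(\frac{\gamma\lambda^\beta}{2}\Bigr)^{\eta/2}U\Bigl(\frac\eta2,1,\frac{\gamma\lambda^\beta}{2}\Bigr).
\]
This is the asserted formula, and it does not depend on $z$.

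The only step that is not pure bookkeeping is the finiteness and positivity of $\Psi(z)$. This is exactly where the condition $\eta<2/\beta$ enters, through the admissible range of $q$ in Proposition \ref{keyprop}.

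As a consistency check with \eqref{mainthm0327}: when ${\bf m}\in(-\sigma^2,0)$ we have $\Theta={\bf m}/\sigma^2$. Then
\[
\frac1\beta\Bigl(\Theta-\frac{2{\bf m}}{\sigma^2}\Bigr)=-\frac{{\bf m}}{\beta\sigma^2}=\frac\eta2,\qquad 1+\frac2\beta\Bigl(\Theta-\frac{{\bf m}}{\sigma^2}\Bigr)=1,
\]
and $\lambda^{\Theta-2{\bf m}/\sigma^2}=\lambda^{\beta\eta/2}$. So the proposition is the weakly subcritical case of Theorem \ref{mainthm}.
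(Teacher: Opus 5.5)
Your argument is the paper's own: divide the scaled limits from Lemma~\ref{lem_limit_F} and Lemma~\ref{lem_survival_prob}, cancel the common constant, and apply Corollary~\ref{cor0409}. Your explicit check that $\Psi(z)\in(0,\infty)$ is a worthwhile addition, since the paper uses it only implicitly (for instance when it asserts $C'\Phi(z,\lambda)<\infty$).

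That check, however, has a sign slip on $x\ge 0$. Because $\eta>0$ throughout the subcritical range, $-\beta\eta<-\beta\eta/2$, so $q=-\beta\eta$ lies \emph{outside} the admissible range of Proposition~\ref{keyprop}. In fact $\int_{\mathbb R}\e^{-\beta\eta x}\,\nu_\Psi(\d x)=\int_0^\infty\int_0^\infty G(y,v)\,\d v\,\d y=\infty$. For fixed $v$, $G(y,v)$ is of order $y^{\frac\eta2-1}\log y$ as $y\to\infty$, which corresponds to $x\to-\infty$.

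The repair is immediate. On $x\ge 0$, bound the integrand by $1$, since $\e^{-\beta\eta x}\le 1$ there, and use $\nu_\Psi(\mathbb R)<\infty$, which is the case $q=0$ of \eqref{psimoment1}. Your treatment of $x<0$ via $q=1-\beta\eta$ is correct, and it is exactly where $\eta<2/\beta$ enters. The same repair, or simply \eqref{measuregoal}, gives $\Phi(z,\lambda)<\infty$.
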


\begin{proof}
By \eqref{yglmt},
\beqnn
\lim_{t\rightarrow\infty}\mathcal L_t(z,\lambda)\ar =\ar \lim_{t \to \infty} \mbb{E}^{z}\left[\exp(-\lambda Z_t) \mid Z_t>0\right]\cr\cr\ar =\ar 1 - \frac{\lim_{t \to \infty} t^{3/2} \e^{\frac{{\bf m}^2}{2 \sigma^2} t} \left(1-\mbb{E}^{z}[\exp(-\lambda Z_t)]\right)}{\lim_{t \to \infty} t^{3/2} \e^{\frac{{\bf m}^2}{2 \sigma^2} t} \mbb{P}^z\left(Z_t>0\right)}.
\eeqnn
Substituting the explicit limits established in \eqref{F_limit_conv} of Lemma \ref{lem_limit_F} and Lemma \ref{lem_survival_prob}, the identical temporal scaling factors and the constants cancel exactly:
\beqnn
\lim_{t\rightarrow\infty}\mathcal L_t(z,\lambda) = 1 - \frac{\frac{4\sqrt 2}{\sigma^3\beta^3\sqrt{ \pi }} \Phi(z,\lambda)}{\frac{4\sqrt 2}{\sigma^3\beta^3\sqrt{ \pi }} \Psi(z)} = 1 - \frac{\Phi(z,\lambda)}{\Psi(z)}.
\eeqnn
Applying Corollary \ref{cor0409}, we obtain
\beqnn
\lim_{t\rightarrow\infty}\mathcal L_t(z,\lambda)\ar=\ar 1 - \left(\frac{\gamma \lambda^{\beta}}{2}\right)^{\frac{\eta}{2}} U\left(\frac{\eta}{2}, 1, \frac{\gamma \lambda^{\beta}}{2}\right),
\eeqnn
which completes the proof.
\qed
\end{proof}

\subsection{The intermediately subcritical and strongly subcritical regime}
The following two lemmas are crucial in both the intermediately and strongly subcritical regimes.
\begin{lemma}\label{fenzifenmu}
 Let $\left(X_t\right)_{t >0}$ and $\left(Y_t\right)_{t >0}$ be two families of non-negative, real-valued random variables. Assume that there exist functions from $(0, \infty)$ to $(0, \infty)$ denoted as $c_1(t)$ and $c_2(t)$, respectively, and constants $a_1, a_2 \in(0, \infty)$ such that
$$\lim _{t \rightarrow \infty} c_1(t) \mathbb E X_t=a_1,\quad \lim _{t \rightarrow \infty} c_2(t) \mathbb E Y_t=a_2,$$
$$
\limsup _{t \rightarrow \infty} c_1(t) \mathbb E X_t^2=0, \quad\limsup _{t \rightarrow \infty} c_2(t) \mathbb E Y_t^2=0 ,$$
and $\lim _{t \rightarrow \infty} \frac{c_2(t)}{c_1(t)} > 0$ exists. Then for any $z>0,$ we have
$$
\lim _{t \rightarrow \infty} \frac{\mathbb E\left[1-\e^{-z X_t}\right]}{\mathbb E\left[1-\e^{-z Y_t}\right]}=\lim_{t \rightarrow \infty} \frac{a_1 c_2(t)}{a_2 c_1(t)}.
$$
\end{lemma}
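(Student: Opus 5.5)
The plan is to sandwich $1-\e^{-zx}$ between its first-order Taylor polynomial and the same polynomial minus a quadratic correction. For every $x\ge 0$ and $z>0$ we have
\[
zx-\frac{z^2x^2}{2}\le 1-\e^{-zx}\le zx .
\]
Applying this with $x=X_t$ and taking expectations gives
\[
z\,\mathbb E X_t-\frac{z^2}{2}\mathbb E X_t^2\le \mathbb E\bigl[1-\e^{-zX_t}\bigr]\le z\,\mathbb E X_t .
\]
The same two-sided bound holds with $Y_t$ in place of $X_t$.

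Next, multiply the bound for $X_t$ by $c_1(t)$ and let $t\to\infty$. The hypotheses give $c_1(t)\mathbb E X_t\to a_1$, and since $c_1(t)\mathbb E X_t^2\ge 0$ with vanishing $\limsup$, it tends to $0$. Both sides of the sandwich therefore converge to $za_1$, so
\[
\lim_{t\to\infty}c_1(t)\,\mathbb E\bigl[1-\e^{-zX_t}\bigr]=za_1 .
\]
The identical argument with $c_2$ yields $c_2(t)\,\mathbb E[1-\e^{-zY_t}]\to za_2>0$. In particular the denominator $\mathbb E[1-\e^{-zY_t}]$ is strictly positive for all large $t$, so the ratio in the statement is well defined eventually.

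Finally, write
\[
\frac{\mathbb E[1-\e^{-zX_t}]}{\mathbb E[1-\e^{-zY_t}]}
=\frac{c_1(t)\,\mathbb E[1-\e^{-zX_t}]}{c_2(t)\,\mathbb E[1-\e^{-zY_t}]}\cdot\frac{c_2(t)}{c_1(t)} .
\]
The first factor converges to $za_1/(za_2)=a_1/a_2$. The second factor converges to the finite positive limit assumed in the statement. The product of the two limits is $\lim_{t\to\infty}a_1c_2(t)/(a_2c_1(t))$, which is the claim. Each step is elementary; the only point needing care is that $a_2>0$, which is exactly what keeps the denominator away from zero in the limit.
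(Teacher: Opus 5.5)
Your proposal is correct and follows the paper's proof essentially step for step: the same sandwich $zx-\frac{z^2x^2}{2}\le 1-\e^{-zx}\le zx$, the same scaled limits $za_1$ and $za_2$, and the same passage to the ratio. Your additional remarks — that the vanishing $\limsup$ together with nonnegativity gives a genuine limit, and that $a_2>0$ keeps the denominator eventually positive — are small clarifications the paper leaves implicit.
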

\begin{proof}
For fixed $z \in(0, \infty)$, we have
$$
z x-\frac{z^2 x^2}{2} \leqslant 1-\e^{-z x} \leqslant z x, \quad x>0.
$$
Thus, under the condition $\lim _{t \rightarrow \infty} c_1(t) \mathbb E X_t=a_1, \limsup _{t \rightarrow \infty} c_1(t)\mathbb E X_t^2=0$, we have
$$
\lim _{t \rightarrow \infty} c_1(t)\mathbb E\left[1-\exp \left(-z X_t\right)\right]=z a_1.
$$
Similarly, $$\lim _{t \rightarrow \infty} c_2(t)\mathbb E\left[1-\exp \left(-z Y_t\right)\right]=z a_2.$$
Therefore, if $\lim _{t \rightarrow \infty} \frac{c_2(t)}{c_1(t)}>0$ exists,
$$
\lim _{t \rightarrow \infty} \frac{\mathbb E\left[1-\e^{-z X_t}\right]}{\mathbb E\left[1-\e^{-z Y_t}\right]}=\frac{a_1}{a_2} \lim _{t \rightarrow \infty} \frac{c_2(t)}{c_1(t)}.
$$
\qed
\end{proof}

Recall that using Theorem \ref{firstlem}, the Laplace transform of the Yaglom limit is reduced to the problem of the ratio of the Laplace transforms of the two processes $(\xi_t(\lambda))_{t\ge 0}$ and $(\zeta_t)_{t\ge 0}$:
 \[\lim_{t\rightarrow\infty}\mathcal L_t(z,\lambda)=1-\lim _{t \rightarrow \infty}\frac{\mathbb E\left[1-\e^{-z \xi_t(\lambda)} \right]}{\mathbb E\left[1-\e^{-z \zeta_t} \right]},\quad z>0,\,\,\lambda>0.\]
 Now using Lemma \ref{fenzifenmu}, we can further simplify the problem to a problem that involves the first and second moments of the two processes $(\xi_t(\lambda))_{t\ge 0}$ and $(\zeta_t)_{t\ge 0}.$ As seen from the definition \eqref{0412xi}, $\xi_t(\lambda)$ is constructed from both $A^{(\eta)}_{t}$ and $B^{(e)}_{t}$. In principle, computing its first moment would require the joint distribution of the two processes. However, the following lemma allows us to rely exclusively on results concerning the limiting behavior of $A^{(\frac2\beta-\eta)}_{t}$.
\begin{lemma}\label{betatrans}For every $\lambda \in(0, \infty)$, $t \in(0, \infty)$ and $\eta \in \mathbb{R},$ we have
\begin{equation}\label{241127}
\mathbb E\xi_t(\lambda)=\e^{(\frac{1}{\beta}-\eta)\frac{\beta\sigma^2}{2}t} \mathbb{E}\left[\left(\gamma A_{\frac{\sigma^2\beta^2}{4}t}^{(\frac{2}{\beta}-\eta)}+{\lambda}^{-\beta}\right)^{-\frac{1}{\beta}}\right],
\end{equation}
and
\begin{equation}\label{2411270}
\mathbb E\zeta_t=\e^{(\frac{1}{\beta}-\eta)\frac{\beta\sigma^2}{2}t} \mathbb{E}\left[\left(\gamma A_{\frac{\sigma^2\beta^2}{4}t}^{(\frac{2}{\beta}-\eta)}\right)^{-\frac{1}{\beta}}\right],
\end{equation}
where $(\xi_t(\lambda))_{t\ge 0}$ and $(\zeta_t)_{t\ge 0}$ are defined \eqref{0412xi}.
\end{lemma}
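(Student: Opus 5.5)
The plan is to use time reversal of Brownian motion together with a Girsanov change of drift. Write $T=\frac{\sigma^2\beta^2}{4}t$ and $X_s=\eta s+B^{(e)}_s$, so that $A^{(\eta)}_T=\int_0^T \e^{2X_s}\,\d s$. Pulling the terminal factor $\e^{2X_T}$ out of the bracket in \eqref{0412xi} gives
\[
\xi_t(\lambda)=\e^{-\frac{2}{\beta}X_T}\Bigl[\gamma\int_0^T \e^{-2(X_T-X_s)}\,\d s+\lambda^{-\beta}\Bigr]^{-\frac1\beta}.
\]

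The first step is time reversal. Set $\hat X_u=X_T-X_{T-u}$ for $0\le u\le T$. By the time-reversal invariance of Brownian increments, $\hat X$ has the same law as $X$, and in particular $\hat X_T=X_T$ holds pathwise. The substitution $u=T-s$ turns the integral into $\int_0^T \e^{-2\hat X_u}\,\d u$. Hence
\[
\mathbb E\xi_t(\lambda)=\mathbb E\Bigl[\e^{-\frac2\beta X_T}\Bigl(\gamma\int_0^T\e^{-2X_u}\,\d u+\lambda^{-\beta}\Bigr)^{-\frac1\beta}\Bigr].
\]
By the symmetry $B\stackrel{d}{=}-B$, we may replace $X$ by $\tilde X_u=-\eta u+B_u$. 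The expression then becomes $\mathbb E\bigl[\e^{\frac2\beta \tilde X_T}(\gamma\int_0^T \e^{2\tilde X_u}\,\d u+\lambda^{-\beta})^{-1/\beta}\bigr]$.

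The second step is Girsanov. Write $\e^{\frac2\beta \tilde X_T}=\e^{\frac2\beta B_T-\frac{2}{\beta^2}T}\cdot \e^{-\frac2\beta\eta T+\frac{2}{\beta^2}T}$. The first factor is an exponential martingale. Under the tilted measure, $B_u$ becomes $B_u+\frac2\beta u$, so $\tilde X_u$ becomes a Brownian motion with drift $\frac2\beta-\eta$. The integral $\int_0^T \e^{2\tilde X_u}\,\d u$ then has the law of $A^{(\frac2\beta-\eta)}_T$.

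The deterministic prefactor is $\exp\{(\frac{2}{\beta^2}-\frac{2\eta}{\beta})T\}$. With $T=\frac{\sigma^2\beta^2}{4}t$, this equals $\exp\{(\frac1\beta-\eta)\frac{\beta\sigma^2}{2}t\}$, which matches \eqref{241127}.

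For \eqref{2411270}, one can let $\lambda\to\infty$ in \eqref{241127}, using monotone convergence on both sides. Alternatively, the same computation can be run directly with the $\lambda^{-\beta}$ term omitted.

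The step needing the most care is the time reversal. It must be applied jointly to the pair (terminal value, integral), which is legitimate because both are functionals of the whole path on $[0,T]$ and the reversed path is equal in law as a process. One also has to track the sign conventions correctly, so that the final drift comes out as $\frac2\beta-\eta$ rather than $\eta-\frac2\beta$. Nonnegativity of all integrands justifies the expectations even when they are infinite, so no integrability issues arise.
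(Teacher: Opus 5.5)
Your proof is correct and uses the same ingredients as the paper's: the Girsanov tilt by $\exp\{\frac{2}{\beta}B_T-\frac{2}{\beta^2}T\}$, time reversal of Brownian increments, and the symmetry $B\stackrel{d}{=}-B$. The only difference is the order: the paper tilts first and then reverses time, while you reverse first and then tilt. The second identity \eqref{2411270} is obtained the same way in both, by monotone convergence as $\lambda\to\infty$.
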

\begin{proof}  Fix $T\ge 0.$ Since $\left(\exp\left\{\frac{2}{\beta}B^{(e)}_s-\frac{2}{\beta^2}s\right\}\right)_{0\le s\le T}$ is a martingale with respect to $(\mathcal F_s)_{0\le s\le T},$ there exists a probability $\mathbb Q$ satisfying $\frac{d\mathbb Q}{d\mathbb P}|_{\mathcal F_s}=\exp\left\{\frac{2}{\beta}B^{(e)}_s-\frac{2}{\beta^2}s\right\},$ for $s\in[0,T].$ Denote by $\mathbb E_\mathbb Q$ the expectation with respect to $\mathbb Q.$ Let $\mathbf C([0,T],\mathbb R)$ denote the space of all continuous functions from $[0,T]$ to $\mathbb R,$ endowed with the Borel \(\sigma\)-field generated by the topology of uniform convergence. We have for any Borel function $h:\mathbf C([0,T],\mathbb R)\rightarrow[0,\infty),$
\begin{equation*}\label{l251}\mathbb E\left[h\left((B^{(e)}_s)_{0\le s\le T}\right)\exp\left\{\frac{2}{\beta}B^{(e)}_T-\frac{2}{\beta^2}T\right\}\right]=\mathbb E_\mathbb Q\left[h\left((B^{(e)}_s)_{0\le s\le T}\right)\right], \end{equation*}
Using the Cameron-Martin-Girsanov theorem, see  \cite[Theorem 3.5.1]{rw00}, we know that $(B^{(e)}_s-\frac{2}{\beta}s)_{ 0\le s\le T}$ is a standard Brownian motion under $\mathbb Q$. Therefore,
\begin{equation*}\label{l252}\mathbb E_\mathbb Q\left[h\left((B^{(e)}_s-\frac{2}{\beta}s)_{0\le s\le T}\right)\right]=\mathbb E\left[h\left((B^{(e)}_s)_{0\le s\le T}\right)\right].\end{equation*}
Thus,
$$\mathbb E\left[h\left(\left(B^{(e)}_s-\frac{2}{\beta}s\right)_{0\le s\le T}\right)\exp\left\{\frac{2}{\beta}B^{(e)}_T-\frac{2}{\beta^2}T\right\}\right]=\mathbb E\left[h\left((B^{(e)}_s)_{0\le s\le T}\right)\right].$$
Taking $T=\frac{\sigma^2\beta^2}{4}t,$ we have
{\small
\begin{equation}\label{241120}
\begin{aligned}
\mathbb{E}\,\xi_t(\lambda)
&= \mathbb{E}\Bigg[
\exp\left\{\frac{2}{\beta}B^{(e)}_{\frac{\beta^2\sigma^2}{4}t}
-\frac{\sigma^2}{2}t\right\}  \times
\left(
\gamma\int_0^{\frac{\beta^2\sigma^2}{4}t}
\exp\left(2\left(\eta-\frac{2}{\beta}\right)u+2B^{(e)}_u\right)\,\d u \right. \\
&\qquad\qquad\qquad\qquad\qquad\qquad   \left.
+\lambda^{-\beta}\exp\left\{2\left(\frac{\sigma^2\beta^2}{4}\left(\eta-\frac{2}{\beta}\right)t
+B^{(e)}_{\frac{\sigma^2\beta^2}{4}t}\right)\right\}
\right)^{-1/\beta}
\Bigg].
\end{aligned}
\end{equation}
}
By the symmetry and stationary increments property of Brownian motion, the right hand side of \eqref{241120} can be simplified  to
$$ \e^{-\frac{\sigma^2}{2}t}\mathbb{E}\left[\left({\gamma  \int_0^{\frac{\beta^2\sigma^2}{4}t} \exp \left(2 (\eta -\frac{2}{\beta})u+2 B^{(e)}_{\frac{\beta^2\sigma^2}{4}t-u}\right) \d u+{\lambda}^{-\beta} \exp \left\{(\eta-\frac{2}{\beta}) \frac{\sigma^2\beta^2}{2}t\right\}}\right)^{-\frac{1}{\beta}}\right].$$
By substitution $s:=\frac{\sigma^2\beta^2}{4}t-u,$ it can be rewritten as
{\small
\beqnn
\ar\ar \e^{-\frac{\sigma^2}{2}t}\mathbb{E}\left[\bigg(\gamma  \int_0^{\frac{\beta^2\sigma^2}{4}t} \exp\bigg\{2\big[(\eta-\frac{2}{\beta})(\frac{\sigma^2\beta^2}{4}t-s)+B^{(e)}_s\big]\bigg\}\d s+{\lambda}^{-\beta} \exp \left\{(\eta-\frac{2}{\beta}) \frac{\sigma^2\beta^2}{2}t\right\}\bigg)^{-\frac{1}{\beta}}\right]\cr\cr
\ar=\ar \e^{-\frac{\sigma^2}{2}t} \exp \left\{(-\eta+\frac{2}{\beta}) \frac{\sigma^2\beta}{2}t\right\}\mathbb{E}\left[\bigg(\gamma  \int_0^{\frac{\beta^2\sigma^2}{4}t} \exp\bigg\{2\big[(\eta-\frac{2}{\beta})(-s)+B^{(e)}_s\big]\bigg\}\d s+{\lambda}^{-\beta}\bigg)^{-\frac{1}{\beta}}\right]\cr\cr
\ar=\ar \e^{(\frac{1}{\beta}-\eta)\frac{\beta\sigma^2}{2}t} \mathbb{E}\left[\left(\gamma A_{\frac{\sigma^2\beta^2}{4}t}^{(\frac{2}{\beta}-\eta)}+{\lambda}^{-\beta}\right)^{-\frac{1}{\beta}}\right].
\eeqnn
}
Thus, we have proved \eqref{241127}. Letting $\lambda\to\infty$ in \eqref{241127}, we obtain \eqref{2411270}.
\qed
\end{proof}

\begin{remark}
Through this lemma, we can see that in the weakly subcritical regime, $A^{(\frac2\beta-\eta)}_{t}$ has a positive drift, which prevents
$\xi_t(\lambda)$ and $\zeta_t$
from satisfying the second-moment condition in Lemma \ref{fenzifenmu}. Hence, this proof strategy is applicable only to the intermediately subcritical and strongly subcritical regimes.
\end{remark}
In the intermediately subcritical regime, $\eta=\frac 2\beta,$ we need the following result concerning the limiting behavior of $A_t^{(0)}.$

\begin{lemma}[B\"{o}inghoff and Hutzenthaler \cite{BH12}]\label{bh12} Let $g:[0, \infty) \rightarrow[0, \infty)$ be a Borel measurable function. Assume for some constants $c, p \in(0, \infty)$ that $g(x) \leq c x^p$ for every $x \geq 0$. Then we get that
$$
\lim _{t \rightarrow \infty} \sqrt{t} \cdot \mathbb{E}\left[g\left(\frac{1}{2 A_t^{(0)}}\right)\right]=\int_0^{\infty} g(a) \cdot \frac{1}{\sqrt{2 \pi}} \frac{\e^{-a}}{a} \d a<\infty.
$$
\end{lemma}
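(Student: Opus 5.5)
The plan is to compute $\mathbb E[g(1/(2A_t^{(0)}))]$ directly from the Matsumoto--Yor joint density of Proposition~\ref{jtpdf} with $\eta=0$, integrate out the Brownian endpoint first, and then let $t\to\infty$ by dominated convergence. With $\eta=0$ the exponential prefactor $\e^{\eta x-\eta^2t/2}$ equals $1$. Integrating out $x$ and applying Tonelli (everything is non-negative) gives
\[
\mathbb E\Big[g\Big(\frac{1}{2A_t^{(0)}}\Big)\Big]
=\int_0^\infty g\Big(\frac1{2y}\Big)\frac{\e^{-1/(2y)}}{y}\int_{-\infty}^{\infty}\e^{-\e^{2x}/(2y)}\,\theta\Big(\frac{\e^x}{y},t\Big)\,\d x\,\d y .
\]
Next I insert the integral formula for $\theta$ and swap the $x$- and $u$-integrals. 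After the substitution $w=\e^x$, the inner $x$-integral becomes
\[
\frac{1}{\sqrt{2\pi^3t}}\int_0^\infty \e^{\frac{\pi^2-u^2}{2t}}\sinh(u)\sin\Big(\frac{\pi u}{t}\Big)\,h_y(u)\,\d u,
\qquad
h_y(u)=\int_0^\infty \frac1y\,\e^{-\frac{w^2}{2y}-\frac{w\cosh u}{y}}\,\d w .
\]

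The key observation is that $h_y(u)\le 1/\cosh u$, obtained by dropping the Gaussian factor. Moreover $\cosh(u)\,h_y(u)\to1$ as $u\to\infty$ for each fixed $y$, because for large $\cosh u$ the mass of the $w$-integral sits at $w=O(y/\cosh u)$, where the Gaussian factor is essentially $1$. Hence $\sinh(u)h_y(u)\le \tanh u\le 1$, and this quantity tends to $1$ as $u\to\infty$.

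I then rescale $u=\sqrt t\,s$, so that $\sin(\pi u/t)=\sin(\pi s/\sqrt t)\approx \pi s/\sqrt t$. Multiplying by $\sqrt t$, the inner integral becomes
\[
\frac{1}{\sqrt{2\pi^3}}\int_0^\infty \e^{\frac{\pi^2}{2t}-\frac{s^2}{2}}\,\big[\sinh(\sqrt t s)h_y(\sqrt t s)\big]\,\sqrt t\sin\Big(\frac{\pi s}{\sqrt t}\Big)\,\d s .
\]
For each fixed $s>0$ the integrand converges to $\e^{-s^2/2}\pi s$. Since $|\sin x|\le |x|$ and $\sinh(u)h_y(u)\le1$, it is dominated by $\e^{\pi^2/2}\pi s\,\e^{-s^2/2}$ for $t\ge1$. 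Therefore the rescaled inner integral converges to $\frac{\pi}{\sqrt{2\pi^3}}\int_0^\infty s\e^{-s^2/2}\,\d s=\frac1{\sqrt{2\pi}}$, uniformly bounded in $y$ and $t\ge1$.

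Finally I pass the limit through the $y$-integral. The $y$-integrand after multiplying by $\sqrt t$ is bounded by a constant times $g(1/(2y))\e^{-1/(2y)}/y$. Under the substitution $a=1/(2y)$, so that $\d y/y=\d a/a$, this bound integrates to a constant times $\int_0^\infty g(a)\e^{-a}a^{-1}\,\d a\le c\int_0^\infty a^{p-1}\e^{-a}\,\d a<\infty$, where the last step uses $g(x)\le cx^p$ with $p>0$. This integrability is exactly where the growth assumption on $g$ is used. Dominated convergence then yields the limit $\int_0^\infty g(a)\frac{1}{\sqrt{2\pi}}\frac{\e^{-a}}{a}\,\d a$, and also shows that it is finite.

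The main obstacle I expect is justifying the Fubini swap and the uniform domination in the middle step. The function $\theta$ is a signed oscillatory integral (through the factor $\sin(\pi u/t)$), so Tonelli does not directly apply to the swap. I would first restrict to $u\le t$, where $\sin(\pi u/t)\ge0$. On $u>t$, the factor $\e^{-u^2/(2t)}$, together with the bound $|\sinh(u)h_y(u)|\le1$, gives a contribution that is $O(\e^{-t/2})$ after multiplying by $\sqrt t$, which is negligible. If this bookkeeping becomes awkward, I would fall back on citing the density of $A_t^{(0)}$ obtained by integrating Proposition~\ref{jtpdf} over $x$, and do the same rescaling there.
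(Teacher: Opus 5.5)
Your proof is correct. It takes a genuinely different route from the paper, which does not prove this lemma but imports it from B\"oinghoff and Hutzenthaler. You instead derive it from the Matsumoto--Yor density already recorded in Proposition~\ref{jtpdf} with $\eta=0$. The steps check out:
\begin{itemize}
\item Integrating out the endpoint is justified by Tonelli, since the joint density is nonnegative.
\item The substitution $w=ys/\cosh u$ gives $\cosh(u)h_y(u)=\int_0^\infty \e^{-ys^2/(2\cosh^2u)}\e^{-s}\,\d s\to1$. Together with $h_y(u)\le 1/\cosh u$, this shows $0\le\sinh(u)h_y(u)\le1$ with limit $1$.
\item In Lemma~\ref{lem_Gt_asymp} the factor $\e^{-r\cosh u}$ keeps $u$ bounded and yields the rate $t^{-3/2}$. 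Here only the Gaussian factor $\e^{-u^2/(2t)}$ cuts off the $u$-integral, so the relevant scale is $u\asymp\sqrt t$.
\item The rescaling $u=\sqrt t\,s$ then gives the rate $t^{-1/2}$ with constant $\frac{\pi}{\sqrt{2\pi^3}}\int_0^\infty s\e^{-s^2/2}\,\d s=\frac1{\sqrt{2\pi}}$, dominated uniformly in $y$ and $t\ge1$.
\item Finally, $g(x)\le cx^p$ with $p>0$ is exactly what makes $g(a)\e^{-a}/a$ an integrable majorant near $a=0$.
\end{itemize}
What your approach buys is self-containedness: both the weakly and the intermediately subcritical regimes then rest on the single density formula of Proposition~\ref{jtpdf} and the same dominated-convergence mechanism. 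The cost is a page of computation that the citation avoids.

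One correction: the obstacle you flag in your last paragraph does not exist. For fixed $y$, the same bound $h_y(u)\le 1/\cosh u$ gives
\[
\int_0^\infty\!\!\int_0^\infty \frac1y\,\e^{-\frac{w^2}{2y}-\frac{w\cosh u}{y}}\,\e^{\frac{\pi^2-u^2}{2t}}\sinh(u)\Big|\sin\Big(\frac{\pi u}{t}\Big)\Big|\,\d w\,\d u\le \e^{\frac{\pi^2}{2t}}\int_0^\infty \e^{-\frac{u^2}{2t}}\tanh(u)\,\d u<\infty .
\]
So Fubini (not Tonelli) justifies the $w$--$u$ swap directly. The split at $u=t$ and the fallback to the marginal density of $A_t^{(0)}$ are unnecessary.
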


\begin{proposition}\label{itmfenzi} In the intermediately subcritical regime $({\bf m}=-\sigma^2)$, for any $z,\lambda>0,$ $$\lim_{t\rightarrow\infty}\mathcal L_t(z,\lambda)=1-\left(\frac{\gamma }{2}\right)^{\frac{1}{\beta}}U\left(\frac{1}{\beta},1,\frac{\gamma \lambda^{\beta}}{2}\right)\lambda.$$
\end{proposition}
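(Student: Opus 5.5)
The plan is to apply Lemma \ref{fenzifenmu} with $X_t=\xi_t(\lambda)$ and $Y_t=\zeta_t$, so that
\[
\lim_{t\to\infty}\mathcal L_t(z,\lambda)=1-\lim_{t\to\infty}\frac{\mathbb E[1-\e^{-z\xi_t(\lambda)}]}{\mathbb E[1-\e^{-z\zeta_t}]}.
\]
I use the common normalization $c_1(t)=c_2(t)=c(t):=\sqrt{T}\,\e^{\frac{\sigma^2}{2}t}$ with $T=\frac{\sigma^2\beta^2}{4}t$. Then $c_2/c_1\equiv 1$, and the limit equals $a_1/a_2$, which contains no $z$. It remains to compute $a_1,a_2$ and to check the second-moment conditions.

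\emph{First moments.} Here $\eta=\frac2\beta$, so $\frac2\beta-\eta=0$ and $(\frac1\beta-\eta)\frac{\beta\sigma^2}{2}t=-\frac{\sigma^2}{2}t$. Lemma \ref{betatrans} therefore gives
\[
c(t)\,\mathbb E\xi_t(\lambda)=\sqrt T\,\mathbb E\Big[g_\lambda\Big(\tfrac{1}{2A^{(0)}_T}\Big)\Big],\qquad g_\lambda(a)=\Big(\tfrac{\gamma}{2a}+\lambda^{-\beta}\Big)^{-1/\beta}.
\]
Since $g_\lambda(a)\le(2a/\gamma)^{1/\beta}$, Lemma \ref{bh12} applies with $p=\frac1\beta$ and yields $a_1=\frac{1}{\sqrt{2\pi}}\int_0^\infty g_\lambda(a)a^{-1}\e^{-a}\,\d a$. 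Likewise $g_\infty(a)=(2a/\gamma)^{1/\beta}$ gives $a_2=\frac{1}{\sqrt{2\pi}}(2/\gamma)^{1/\beta}\Gamma(\frac1\beta)$.

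To identify the ratio, write $g_\lambda(a)=(2a/\gamma)^{1/\beta}(1+\frac{2a}{\gamma\lambda^\beta})^{-1/\beta}$ and substitute $a=rs$ with $r=\frac{\gamma\lambda^\beta}{2}$. This gives
\[
\int_0^\infty g_\lambda(a)a^{-1}\e^{-a}\,\d a=\Big(\frac2\gamma\Big)^{1/\beta}r^{1/\beta}\int_0^\infty s^{\frac1\beta-1}(1+s)^{-\frac1\beta}\e^{-rs}\,\d s=\Big(\frac2\gamma\Big)^{1/\beta}r^{1/\beta}\,\Gamma\Big(\frac1\beta\Big)\,U\Big(\frac1\beta,1,r\Big).
\]
Hence $a_1/a_2=r^{1/\beta}U(\frac1\beta,1,r)=(\frac\gamma2)^{1/\beta}\lambda\,U(\frac1\beta,1,\frac{\gamma\lambda^\beta}{2})$, which is the claimed formula.

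\emph{Second moments (main obstacle).} Since $0\le\xi_t(\lambda)\le\zeta_t$, it suffices to show $\sqrt T\,\e^{\frac{\sigma^2}{2}t}\,\mathbb E\zeta_t^2\to0$. Note that $\frac{\sigma^2}{2}t=\frac{2T}{\beta^2}$.

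I would first repeat the Girsanov and time-reversal argument of Lemma \ref{betatrans} with exponent $\frac2\beta$ in place of $\frac1\beta$, i.e.\ with density $\exp\{\frac4\beta B_T-\frac8{\beta^2}T\}$. This should give
\[
\mathbb E\zeta_t^2=\gamma^{-2/\beta}\,\mathbb E\big[(A^{(2/\beta)}_T)^{-2/\beta}\big]\qquad\text{(with no exponential prefactor).}
\]
A positive-drift exponential functional has inverse moments decaying like $T^{-3/2}\e^{-\mu^2T/2}$; for $\mu=\frac2\beta$ this rate $\e^{-2T/\beta^2}$ exactly cancels $\e^{\frac{\sigma^2}{2}t}$. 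The whole difficulty is therefore to capture the polynomial factor, since $\sqrt T\cdot T^{-3/2}\to0$ is needed.

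To prove this I would apply Girsanov once more to remove the drift:
\[
\mathbb E\big[(A^{(\mu)}_T)^{-2/\beta}\big]=\e^{-\mu^2T/2}\,\mathbb E\big[\e^{\mu B_T}(A^{(0)}_T)^{-2/\beta}\big].
\]
I would then estimate the drift-free expectation, either via the Matsumoto--Yor density of Proposition \ref{jtpdf} with $\eta=0$ (whose $\theta(\cdot,T)$ carries a factor $T^{-3/2}$, as in Lemma \ref{lem_Gt_asymp}), or by splitting on $B_T$ and using Lemma \ref{bh12}-type bounds. If this fails, a fallback is to truncate: replace $\zeta_t$ by $\zeta_t\wedge1$ in the bound $1-\e^{-zx}\ge zx-\frac{z^2x^2}{2}$, and control $\mathbb P(\zeta_t>1)$ separately.
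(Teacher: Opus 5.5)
Your first-moment computation is the paper's argument: Lemma~\ref{betatrans} at $\eta=\frac2\beta$, Lemma~\ref{bh12} with $g_\lambda$ and $g_\infty$, and the substitution producing $U(\frac1\beta,1,\cdot)$. Normalizing by $\sqrt T$ instead of $\sqrt t$ only changes constants that cancel. The gap is the second-moment hypothesis of Lemma~\ref{fenzifenmu}. You name it as the main obstacle but do not prove it; you list two candidate routes and a fallback without carrying any of them out.

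Two things you did write need correcting. First, your Girsanov/time-reversal step is vacuous. The general identity is $\mathbb E[(A_T^{(\mu)})^{-p}]=\e^{2p(p-\mu)T}\,\mathbb E[(A_T^{(2p-\mu)})^{-p}]$, and at $\mu=p=\frac2\beta$ it just returns the definition of $\mathbb E\zeta_t^2$. Second, decay like $T^{-3/2}\e^{-\mu^2T/2}$ is not a general property of inverse moments of positive-drift functionals; it holds only for $0<\mu<2p$. Your own first moment has $p=\frac1\beta$, $\mu=2p$, and decays like $T^{-1/2}\e^{-\mu^2T/2}$; for $\mu>2p$ the rate is $\e^{-2p(\mu-p)T}$. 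So the polynomial factor has to be derived, and that derivation is exactly where the regime enters. Note also that you need only $o(T^{-1/2})$, not $T^{-3/2}$.

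Your first route does close the gap, and the missing verification is short. From $|\sin x|\le|x|$, as in Lemma~\ref{lem_Gt_asymp}, one gets $|\theta(r,T)|\le \frac{\e^{\pi^2/(2T)}}{\sqrt{2\pi}}\,T^{-3/2}K_0(r)$. Apply Proposition~\ref{jtpdf} with drift $\mu$, then substitute $u=\e^x$ and $u=ys$:
\[
\mathbb E\big[(A_T^{(\mu)})^{-\mu}\big]\le C\,T^{-3/2}\e^{-\mu^2T/2}\int_0^\infty\!\!\int_0^\infty y^{-1}s^{\mu-1}\e^{-\frac{1}{2y}-\frac{ys^2}{2}}K_0(s)\,\d y\,\d s
=2C\,T^{-3/2}\e^{-\mu^2T/2}\int_0^\infty s^{\mu-1}K_0(s)^2\,\d s<\infty .
\]
The powers of $y$ cancel precisely because the exponent equals the drift. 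For general $(\mu,p)$ the same computation gives $2\int_0^\infty s^{p-1}K_{\mu-p}(s)K_0(s)\,\d s$, which is finite only when $0<\mu<2p$. With $\mu=\frac2\beta$ we have $\frac{\mu^2T}{2}=\frac{\sigma^2t}{2}$, so $\sqrt T\,\e^{\sigma^2t/2}\,\mathbb E\zeta_t^2=O(T^{-1})\to0$.

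The paper avoids the density entirely. It imports a bound of Palau and Pardo, obtained by splitting $[0,T]$ at $T/2$ and using independence of increments and Girsanov:
\[
\mathbb E\big[(\gamma A_T^{(2/\beta)})^{-2/\beta}\big]\le \e^{-\sigma^2t/2}\big[\mathbb E(\gamma A^{(0)}_{T/2})^{-1/\beta}\big]^2 .
\]
It then reuses the first-moment asymptotics from Lemma~\ref{bh12}, which show the bracket is $O(t^{-1})$. After normalization this gives only $O(T^{-1/2})$ rather than your sharp $O(T^{-1})$. That suffices, and it requires no new computation.
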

\begin{proof}
Recall that $(\xi_t(\lambda))_{t\ge 0}$ and $(\zeta_t)_{t\ge 0}$ are defined by \eqref{0412xi}. We prove this result by applying Lemma \ref{fenzifenmu} with
  $$X_t=\xi_t(\lambda),\quad Y_t= \zeta_t,\quad c_1(t)=c_2(t)=\sqrt{t} \e^{\frac{\sigma^2}{2} t}.$$
By \eqref{yglmt} and Theorem \ref{firstlem},
\beqnn\lim_{t\rightarrow\infty}\mathcal L_t(z,\lambda)=1-\lim _{t \rightarrow \infty}\frac{\mathbb E^z\left[1-\e^{-\lambda Z_t} \right]}{\mathbb P^z(Z_t>0)}=1-\lim _{t \rightarrow \infty} \frac{\mathbb E\left(1-\e^{-z \xi_t(\lambda)}\right)}{\mathbb E\left(1-\e^{-z \zeta_t}\right)}.\eeqnn
Applying \eqref{241127},
$$\lim _{t \rightarrow \infty} c_1(t) \mathbb E \xi_t(\lambda)=\lim _{t \rightarrow \infty}\sqrt{t}\e^{\frac{\sigma^2}{2} t}\e^{(\frac{1}{\beta}-\eta)\frac{\beta\sigma^2}{2}t} \mathbb{E}\left[\left(\gamma A_{\frac{\sigma^2\beta^2}{4}t}^{(\frac{2}{\beta}-\eta)}+{\lambda}^{-\beta} \right)^{-\frac{1}{\beta}}\right].$$
In the intermediately subcritical regime,  $\eta=-\frac{2 {\bf m}}{\beta\sigma^2}=\frac{2}{\beta}.$ Hence
$$\lim _{t \rightarrow \infty} c_1(t) \mathbb E \xi_t(\lambda)=\lim _{t \rightarrow \infty}  \sqrt{t} \mathbb{E}\left[\left(\gamma A_{\frac{\sigma^2\beta^2}{4}t}^{(0)}+{\lambda}^{-\beta} \right)^{-\frac{1}{\beta}}\right].$$
Using Lemma \ref{bh12} with $g(x)=\left(\frac{\gamma }{2x}+{\lambda}^{-\beta}\right)^{-1/\beta},$ we get
$$\lim _{t \rightarrow \infty}\left(\frac{\sigma^2\beta^2}{4}t\right)^{1/2}\mathbb{E}\left[\left(\gamma A_{\frac{\sigma^2\beta^2}{4}t}^{(0)}+{\lambda}^{-\beta} \right)^{-\frac{1}{\beta}}\right]=\int_0^{\infty}  \left(\frac{\gamma }{2a}+{\lambda}^{-\beta}\right)^{-1/\beta}\frac{1}{\sqrt{2\pi}}\cdot  \frac{\e^{-a}}{a} \d a.$$
Thus,
\begin{equation}\label{interfenzi}
\lim _{t \rightarrow \infty} c_1(t) \mathbb E \xi_t(\lambda)=\lim _{t \rightarrow \infty}\sqrt{t}  \mathbb{E}\left[\left(\gamma A_{\frac{\sigma^2\beta^2}{4}t}^{(0)}+{\lambda}^{-\beta} \right)^{-\frac{1}{\beta}}\right]=\frac{\sqrt 2}{\beta\sigma\sqrt\pi}\int_0^{\infty}  \left(\frac{\gamma }{2a}+{\lambda}^{-\beta}\right)^{-\frac{1}{\beta}}\cdot  \frac{\e^{-a}}{a} \d a.
\end{equation}
Applying \eqref{2411270} and using Lemma \ref{bh12} with $g(x)=\left(\frac{\gamma }{2x}\right)^{-1/\beta},$ we obtain
\beqlb\label{interfenmu}
\lim _{t \rightarrow \infty} c_2(t) \mathbb E \zeta_t\ar=\ar\lim _{t \rightarrow \infty}\sqrt{t}  \mathbb{E}\left[\left(\gamma A_{\frac{\sigma^2\beta^2}{4}t}^{(0)} \right)^{-\frac{1}{\beta}}\right]\cr\cr
\ar=\ar\frac{\sqrt 2}{\beta\sigma\sqrt\pi}\int_0^{\infty}  \left(\frac{\gamma }{2a}\right)^{-\frac{1}{\beta}}\cdot  \frac{\e^{-a}}{a} \d a= \frac{\sqrt 2}{\beta\sigma\sqrt\pi}\left(\frac{2}{\gamma }\right)^{\frac{1}{\beta}}\Gamma\left(\frac{1}{\beta}\right).
\eeqlb

It is clear that
$$
\limsup _{t \rightarrow \infty} c_1(t) \mathbb E \xi_t(\lambda)^2\le\limsup _{t \rightarrow \infty} c_2(t) \mathbb E \zeta_t^2=\limsup _{t \rightarrow \infty}\sqrt{t}\e^{\frac{\sigma^2}{2} t} \mathbb{E}\left[\left(\gamma A_{\frac{\sigma^2\beta^2}{4}t}^{(\eta)}\right)^{-\frac{2}{\beta}}\right].
$$
 Applying \cite[Lemma 1]{pp2} with $p=\frac{1}{\beta}$ and $\eta=\frac{2}{\beta}$, we obtain that for any $t\ge 0,$ the following relation holds:
 $$\mathbb E\left[\left(\gamma A_{\frac{\sigma^2\beta^2}{4}t}^{(\eta)}\right)^{-\frac{2}{\beta}}\right]\le \e^{-\frac{\sigma^2}{2} t} \bigg[\mathbb E\left(\gamma  A_{\frac{\sigma^2\beta^2}{8}t}^{(0)}\right)^{-\frac{1}{\beta}}\bigg]^2.$$
From equation \eqref{interfenmu} we can see that $ \bigg[\mathbb E\left(\gamma  A_{\frac{\sigma^2\beta^2}{8}t}^{(0)}\right)^{-\frac{1}{\beta}}\bigg]^2$ decays as $t^{-1}$ as $t\rightarrow\infty.$ Thus
\beqlb\label{moment2}
\limsup _{t \rightarrow \infty} c_1(t) \mathbb E \xi_t(\lambda)^2\le\limsup _{t \rightarrow \infty} c_2(t) \mathbb E \zeta_t^2{ \le}\lim _{t \rightarrow \infty}\sqrt{t}  \bigg[\mathbb E\left(\gamma  A_{\frac{\sigma^2\beta^2}{8}t}^{(0)}\right)^{-\frac{1}{\beta}}\bigg]^2=0.
\eeqlb
Equations \eqref{interfenzi}, \eqref{interfenmu} and \eqref{moment2} guarantee that the condition of Lemma \ref{fenzifenmu} is satisfied.
 $$a_1=\frac{\sqrt 2}{\beta\sigma\sqrt\pi}\int_0^{\infty}  \left(\frac{\gamma }{2a}+{\lambda}^{-\beta}\right)^{-\frac{1}{\beta}}\cdot  \frac{\e^{-a}}{a} \d a, \qquad a_2= \frac{\sqrt 2}{\beta\sigma\sqrt\pi}\left(\frac{2}{\gamma }\right)^{\frac{1}{\beta}}\Gamma\left(\frac{1}{\beta}\right).$$
Therefore,
$$
\lim _{t \rightarrow \infty} \frac{\mathbb E\left[1-\e^{-z \xi_t(\lambda)}\right]}{\mathbb E\left[1-\e^{-z \zeta_t}\right]}=\frac{a_1}{a_2}
=\left(\frac{\gamma }{2}\right)^{\frac{1}{\beta}}\left(\Gamma(\frac{1}{\beta})\right)^{-1}\int_0^{\infty}  \left(\frac{\gamma }{2a}+{\lambda}^{-\beta}\right)^{-\frac{1}{\beta}}\cdot  \frac{\e^{-a}}{a} \d a.
$$
Thus, by \eqref{yglmt} and Theorem \ref{firstlem},
\beqnn
\lim_{t\rightarrow\infty}\mathcal L_t(z,\lambda)=1-\left(\frac{\gamma }{2}\right)^{\frac{1}{\beta}}\left(\Gamma\left(\frac{1}{\beta}\right)\right)^{-1}\int_0^{\infty}  \left(\frac{\gamma }{2a}+{\lambda}^{-\beta}\right)^{-\frac{1}{\beta}}\cdot  \frac{\e^{-a}}{a} \d a.
\eeqnn
Using a change of variable $u=\frac{2a}{\gamma \lambda^\beta},$ we get
\beqnn
\lim_{t\rightarrow\infty}\mathcal L_t(z,\lambda)
\ar=\ar
1-\lambda
\frac{\left(\frac{\gamma }{2}\right)^{1/\beta}}{\Gamma(1/\beta)}
\int_0^\infty
\e^{-\frac{\gamma \lambda^\beta}{2}u}
u^{\frac1\beta-1}(1+u)^{-1/\beta}\,\d u\cr\cr
\ar=\ar 1-\left(\frac{\gamma }{2}\right)^{\frac{1}{\beta}}U\left(\frac{1}{\beta},1,\frac{\gamma \lambda^{\beta}}{2}\right)\lambda.
\eeqnn
\qed

\end{proof}

For the strongly subcritical regime, $\eta>\frac 2\beta,$ we need the following result of Dufresne \cite{df90}, see Yor \cite[ (5.h)]{yor92}.
\begin{lemma}\label{gam}
Let $\left(B_t\right)_{t \geq 0}$ be a standard Brownian motion. For all $b>0$,
$$
\left(2\int_0^{\infty} \exp \left(2 B_s-2b s\right) \d s\right)^{-1}
$$
follows a Gamma distribution with the shape parameter $b$ and the scale parameter $1.$
\end{lemma}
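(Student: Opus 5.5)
We plan to prove Lemma \ref{gam} by identifying the law of $I:=\int_0^\infty \e^{2B_s-2bs}\,\d s$ as the invariant distribution of a one-dimensional diffusion, and then computing that invariant density explicitly.

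Set $\xi_s=2B_s-2bs$. Since $\xi_s/s\to-2b<0$ almost surely, $I<\infty$ a.s. and $I_t:=\int_0^t \e^{\xi_s}\,\d s\uparrow I$. The first step is time reversal. For fixed $t$, the process $(\xi_t-\xi_{t-s})_{0\le s\le t}$ has the same law as $(\xi_s)_{0\le s\le t}$, by stationarity and symmetry of Brownian increments. This is the same device used in Lemma \ref{betatrans}. It gives
\[
I_t\stackrel{d}{=}Y_t:=\e^{\xi_t}\int_0^t \e^{-\xi_s}\,\d s .
\]
By It\^o's formula, $Y$ solves the linear SDE
\[
\d Y_t=2Y_t\,\d B_t+\bigl((2-2b)Y_t+1\bigr)\d t,\qquad Y_0=0,
\]
so $Y$ is a time-homogeneous diffusion on $(0,\infty)$ with generator
\[
\mathcal A f(y)=2y^2f''(y)+\bigl((2-2b)y+1\bigr)f'(y).
\]
Consequently the law of $I$ equals the weak limit of the law of $Y_t$, and this limit exists because $I_t\to I$ a.s.

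The second step identifies this limit as an invariant law of $Y$. We use the Markov property at a fixed time $h$:
\[
I=I_h+\e^{\xi_h}\,\widetilde I,
\]
where $\widetilde I$ is an independent copy of $I$. Hence the law $\mu$ of $I$ is invariant for the transition kernel $P_h(y,\cdot)=\mathrm{Law}\bigl(I_h+\e^{\xi_h}y\bigr)$, which is exactly the transition kernel of $Y$, since $Y_h$ started at $y$ equals $\e^{\xi_h}(y+\int_0^h\e^{-\xi_s}\d s)\stackrel{d}{=} I_h+\e^{\xi_h}y$ by the same reversal.

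The third step computes the invariant density. A stationary density $p$ must satisfy the zero-flux equation $(2y^2p)'-((2-2b)y+1)p=0$, which gives
\[
\frac{p'}{p}=-\frac{1+b}{y}+\frac{1}{2y^2},\qquad p(y)\propto y^{-1-b}\e^{-1/(2y)}.
\]
This is integrable on $(0,\infty)$ precisely because $b>0$. Changing variables to $G=1/(2Y)$ turns this density into one proportional to $g^{b-1}\e^{-g}$, which is the Gamma$(b,1)$ law claimed.

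The main obstacle is uniqueness: we must know that $\mu$ is the only invariant probability measure of $Y$, so that it coincides with the density found above. For this we would invoke the standard theory of one-dimensional regular diffusions. The scale density is $s'(y)\propto y^{b-1}\e^{1/(2y)}$. This blows up non-integrably at $0$, so the boundary $0$ is inaccessible, and it is integrable near $\infty$ only in a way that makes the speed measure $p$ finite. A diffusion of this kind is positive recurrent, with unique invariant law equal to its normalized speed measure.

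If this verification becomes messy, we would instead compare Mellin transforms. Using the decomposition $I=I_h+\e^{\xi_h}\widetilde I$ with small $h$, the negative moments $\mathbb E[I^{-n}]$ satisfy the recursion $\mathbb E[I^{-n}]=\frac{2(n-1+b)}{?}\cdots$ read off from $\mathcal A$ applied to $y^{-n}$. Concretely, $\mathbb E[\mathcal A f(I)]=0$ with $f(y)=y^{-n}$ gives $2(n+1-b)\,\mathbb E[I^{-n}]\cdot n-n\,\mathbb E[I^{-n-1}]\cdot$const$\,=0$, i.e. $\mathbb E[(2I)^{-n-1}]=(b+n)\,\mathbb E[(2I)^{-n}]$. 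These moments coincide with the Gamma$(b,1)$ moments, and the Gamma law is determined by its moments since its moment generating function is finite near $0$.
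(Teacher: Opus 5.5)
The paper does not prove this lemma. It is Dufresne's identity, and the paper simply quotes it from Dufresne and from Yor's (5.h). Your argument is therefore an independent, self-contained proof, and its main line is correct:

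\begin{itemize}
\item the time reversal (the same device as in Lemma~\ref{betatrans}) identifies $I_t$ in law with the solution $Y_t$ of a linear SDE;
\item the decomposition $I=I_h+\e^{\xi_h}\widetilde I$, together with the joint reversal of $(\xi_h,I_h)$, shows that the law of $I$ is invariant for $Y$;
\item the zero-flux density $y^{-1-b}\e^{-1/(2y)}$ is mapped to the Gamma$(b,1)$ law by $y\mapsto 1/(2y)$.
\end{itemize}

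Compared with the citation, this costs some space. It uses only It\^o's formula and one-dimensional diffusion theory, which fits the style of the rest of the paper.

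Two points need repair.

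\textbf{Uniqueness.} Your justification misstates the behaviour of the scale function at $\infty$. For $b>0$, $s'(y)\propto y^{b-1}\e^{1/(2y)}$ is \emph{not} integrable near $\infty$, and this non-integrability is exactly what you need.
\begin{itemize}
\item It gives $s(\infty)=+\infty$. Together with $s(0+)=-\infty$, this yields recurrence on $(0,\infty)$.
\item The speed measure $m(\d y)=\d y/(2y^2s'(y))\propto y^{-1-b}\e^{-1/(2y)}\d y$ is finite, so $Y$ is positively recurrent with a unique invariant law.
\item If $s'$ were integrable at $\infty$, $Y$ would be transient to $\infty$ and would have no invariant law at all.
\end{itemize}
If you prefer not to quote the theory, argue directly. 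Invariance gives $\int\mathcal A f\,\d\mu=0$ for $f\in C_c^\infty(0,\infty)$. In one dimension this elementarily forces $\mu$ to have a smooth density $p$ with constant flux $\kappa$. Write $p=p_0q$, where $p_0$ is the zero-flux solution; then $q=C+c\kappa\,s$ for constants $C$ and $c\neq 0$. Since $s$ takes every real value, $q\ge 0$ forces $\kappa=0$.

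\textbf{The moment fallback.} It is not usable as written.
\begin{itemize}
\item The correct generator computation is $\mathcal A(y^{-n})=2n(n+b)y^{-n}-ny^{-n-1}$. This gives $\mathbb E[(2I)^{-n-1}]=(n+b)\,\mathbb E[(2I)^{-n}]$ only for $n\ge 1$; at $n=0$ the identity is vacuous.
\item The base case $\mathbb E[(2I)^{-1}]=b$ therefore needs another test function. For example, $f=\log y$ gives $\mathcal A f=1/y-2b$.
\item You would also have to justify $\mathbb E[\mathcal A f(I)]=0$ for these unbounded $f$. Finiteness of $\mathbb E[I^{-n}]$ follows from $I\ge \e^{-2\sup_{s\le 1}|B_s|-2b}$.
\end{itemize}
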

\begin{proposition}\label{strgprop}
In the strongly subcritical regime $({\bf m}\in(-\infty,-\sigma^2))$, for any $z,\lambda>0$,
\[
\lim_{t\to\infty}\mathcal L_t(z,\lambda)
=
1-\left(\frac{\gamma}{2}\right)^{\eta-\frac{1}{\beta}}
U\left(\eta-\frac1\beta,\eta-\frac2\beta+1,\frac{\gamma \lambda^\beta}{2}\right)
\lambda^{\beta\eta-1}.
\]
\end{proposition}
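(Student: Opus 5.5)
We will follow the same pattern as in Proposition \ref{itmfenzi} and apply Lemma \ref{fenzifenmu} with $X_t=\xi_t(\lambda)$, $Y_t=\zeta_t$ and
\[
c_1(t)=c_2(t)=\e^{-(\frac1\beta-\eta)\frac{\beta\sigma^2}{2}t}=\e^{-({\bf m}+\frac{\sigma^2}{2})t}.
\]
This normalization is suggested by Lemma \ref{betatrans} and matches the survival-probability rate of \cite[Theorem 3]{pp2}. By \eqref{yglmt} and Theorem \ref{firstlem}, we have $\lim_t\mathcal L_t(z,\lambda)=1-\lim_t \mathbb E[1-\e^{-z\xi_t(\lambda)}]/\mathbb E[1-\e^{-z\zeta_t}]$, so it suffices to verify the hypotheses of Lemma \ref{fenzifenmu} and then compute $a_1/a_2$.

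\textbf{First moments.} By Lemma \ref{betatrans}, $c_1(t)\mathbb E\xi_t(\lambda)=\mathbb E[(\gamma A^{(b)}_{T}+\lambda^{-\beta})^{-1/\beta}]$, where $b=\frac2\beta-\eta<0$ and $T=\frac{\sigma^2\beta^2}{4}t$. Since $A^{(b)}_T=\int_0^T\exp(2B_s-2|b|s)\,\d s$ increases a.s. to the finite limit $A^{(b)}_\infty$, and the integrand is bounded by $\lambda$, dominated convergence gives
\[
a_1=\mathbb E\big[(\gamma A^{(b)}_\infty+\lambda^{-\beta})^{-1/\beta}\big].
\]
By Lemma \ref{gam}, $A^{(b)}_\infty\stackrel{d}{=}1/(2G)$ with $G\sim\Gamma(|b|,1)$, $|b|=\eta-\frac2\beta$. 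For $\zeta_t$ we use monotone convergence: $(\gamma A^{(b)}_T)^{-1/\beta}$ decreases in $T$, and at $T=1$ it is integrable because negative moments of $A^{(b)}_1$ are finite. This yields
\[
a_2=(2/\gamma)^{1/\beta}\,\mathbb E[G^{1/\beta}]=(2/\gamma)^{1/\beta}\,\frac{\Gamma(\eta-\frac1\beta)}{\Gamma(\eta-\frac2\beta)},
\]
which is consistent with \cite[Theorem 3]{pp2}.

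\textbf{Second moments.} Since $\xi_t(\lambda)\le\zeta_t$, it suffices to show $c_2(t)\mathbb E\zeta_t^2\to0$. I would redo the Girsanov argument of Lemma \ref{betatrans} with exponent $2/\beta$, that is, with the martingale $\exp\{\frac4\beta B_s-\frac8{\beta^2}s\}$. This gives
\[
\mathbb E\zeta_t^2=\e^{(\frac2\beta-\eta)\beta\sigma^2 t/2}\,\mathbb E\big[(\gamma A^{(\frac4\beta-\eta)}_T)^{-2/\beta}\big].
\]
The proof then splits according to the sign of $\frac4\beta-\eta$, and this is the main obstacle. If $\eta>\frac4\beta$, the expectation converges to a finite Dufresne-type constant, and $c_2(t)\mathbb E\zeta_t^2\lesssim \e^{(\frac1\beta)\beta\sigma^2t/2}\e^{-\ldots}$. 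More precisely, the exponent becomes $(\frac2\beta-\eta)-(\frac1\beta-\eta)=\frac1\beta>0$ multiplied by $\frac{\beta\sigma^2}{2}t$, which grows. So a plain second moment fails in general, and one should instead use a truncated moment. I would replace $\frac{z^2x^2}{2}$ in Lemma \ref{fenzifenmu} by the bound $1-\e^{-zx}\ge zx-C_\delta(zx)^{1+\delta}$ with small $\delta>0$. The argument then only requires $c(t)\mathbb E\zeta_t^{1+\delta}\to0$. The same Girsanov computation with exponent $(1+\delta)/\beta$ gives a prefactor $\exp\{[(\frac{1+\delta}{\beta}-\eta)-(\frac1\beta-\eta)]\frac{\beta\sigma^2}{2}t\}$ times an expectation involving drift $\frac{2(1+\delta)}\beta-\eta$. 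For small $\delta$ this drift is still negative, so the expectation converges. One then needs to balance this carefully, or else invoke \cite[Lemma 1]{pp2} as in \eqref{moment2}. I expect this verification of the negligibility of higher-order terms to be the delicate step. If it fails, a fallback is to show directly, by dominated convergence, that $c(t)\mathbb E[1-\e^{-z\xi_t}]\to z a_1$ under the Girsanov-transformed measure, using $1-\e^{-zx}=zx\,h(zx)$ with $h$ bounded.

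\textbf{Identification.} With $a_1/a_2$ in hand, we write
\[
a_1=\frac1{\Gamma(|b|)}\int_0^\infty\Big(\frac{\gamma}{2g}+\lambda^{-\beta}\Big)^{-1/\beta}g^{|b|-1}\e^{-g}\,\d g
\]
and substitute $g=\frac{\gamma\lambda^\beta}{2}u$, exactly as at the end of Proposition \ref{itmfenzi}. The integrand becomes $\lambda\,(\frac{\gamma\lambda^\beta}{2})^{|b|}u^{|b|+\frac1\beta-1}(1+u)^{-1/\beta}\e^{-\frac{\gamma\lambda^\beta}{2}u}$. Dividing by $a_2$, the Gamma factors cancel. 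Matching with the definition of $U(a,b',r)$ with $a=\eta-\frac1\beta$ and $b'-a-1=-\frac1\beta$, i.e. $b'=\eta-\frac2\beta+1$, we obtain
\[
\frac{a_1}{a_2}=\Big(\frac\gamma2\Big)^{\eta-\frac1\beta}\lambda^{\beta\eta-1}\,U\Big(\eta-\frac1\beta,\ \eta-\frac2\beta+1,\ \frac{\gamma\lambda^\beta}2\Big).
\]
This is the claimed formula, and it is independent of $z$.
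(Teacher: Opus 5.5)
Your normalization $c_1=c_2$, the first-moment limits via Lemma~\ref{betatrans} and Dufresne's identity, the dominating functions, and the identification of $a_1/a_2$ with the Kummer function all match the paper's argument. The gap is the negligibility step. You never verify the second-order hypothesis of Lemma~\ref{fenzifenmu}, or any substitute for it. Moreover, your reason for abandoning the plain second moment is an arithmetic slip. Running the Girsanov/time-reversal argument of Lemma~\ref{betatrans} with a general exponent $p$ gives, for $T=\frac{\sigma^2\beta^2}{4}t$,
\[
\mathbb E\big[(A^{(\eta)}_T)^{-p}\big]=\e^{2p(p-\eta)T}\,\mathbb E\big[(A^{(2p-\eta)}_T)^{-p}\big].
\]
This prefactor is quadratic in $p$. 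For $p=\frac2\beta$ it is $\e^{(\frac2\beta-\eta)\beta\sigma^2 t}$, which is twice the exponent you wrote. Hence
\[
c_2(t)\,\mathbb E\zeta_t^2=\e^{\frac{\beta\sigma^2}{2}(\frac3\beta-\eta)t}\,\mathbb E\big[(\gamma A^{(\frac4\beta-\eta)}_T)^{-2/\beta}\big].
\]
This tends to $0$ whenever $\eta>\frac3\beta$: bound the expectation by its value at $T=1$. Only for $\frac2\beta<\eta\le\frac3\beta$ must one also use the decay of the remaining expectation. The paper treats all $\eta>\frac2\beta$ at once with the Palau--Pardo splitting bound
\[
\mathbb E\big[(A^{(\eta)}_T)^{-2/\beta}\big]\le \e^{(\mathbf m+\frac{\sigma^2}{2})t}\,\mathbb E\big[(A^{(\frac2\beta-\eta)}_{T/2})^{-1/\beta}\big]\,\mathbb E\big[(A^{(\eta-\frac2\beta)}_{T/2})^{-1/\beta}\big].
\]
Here the first factor converges by Dufresne's identity and the second tends to $0$ by dominated convergence.

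The same slip makes your $(1+\delta)$ variant look delicate when it is not. With $p=\frac{1+\delta}{\beta}$ the prefactor is $\exp\{(1+\delta)(\frac{1+\delta}{\beta}-\eta)\frac{\beta\sigma^2}{2}t\}$. So $c_2(t)\mathbb E\zeta_t^{1+\delta}$ carries the exponent $\frac{\beta\sigma^2}{2}\,\delta\,(\frac{2+\delta}{\beta}-\eta)\,t$, which is negative. For $0<\delta<\min\{1,\frac{\beta\eta}{2}-1\}$ the drift $\frac{2(1+\delta)}{\beta}-\eta$ is negative, so the remaining expectation converges by Dufresne's identity. Combined with $1-\e^{-y}\ge y-y^{1+\delta}$ for $y>0$, this would close the gap uniformly in $\eta>\frac2\beta$, without the splitting bound.

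Your dominated-convergence fallback also works. After the transformation, the argument of $h$ for $\zeta_t$ becomes $z\gamma^{-1/\beta}\e^{-\frac2\beta((\eta-\frac2\beta)T-W_T)}(A^{(\frac2\beta-\eta)}_T)^{-1/\beta}$ for a Brownian motion $W$, and this tends to $0$ almost surely. As written, however, neither route is carried out, and the exponents you did compute point the wrong way. So the key hypothesis remains unproved in your proposal.
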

 \begin{proof}
Recall that $(\xi_t(\lambda))_{t\ge 0}$ and $(\zeta_t)_{t\ge 0}$ are defined \eqref{0412xi}. We will show that the condition of Lemma \ref{fenzifenmu} is satisfied with
$$X_t=\xi_t(\lambda),\quad Y_t= \zeta_t,\quad c_1(t)=c_2(t)=\e^{-\left({\bf m}+\frac{\sigma^2}{2}\right) t},$$
  $$a_1=\frac{\Gamma(\eta-\frac{1}{\beta})}{\Gamma(\eta-\frac{2}{\beta})}
\left(\frac{\gamma}{2}\right)^{\eta-\frac{2}{\beta}}\lambda^{\beta\eta-1} U\left(\eta-\frac1\beta,\eta-\frac2\beta+1,\frac{\gamma\lambda^\beta}{2}\right),\quad a_2=\left(\frac{2}{\gamma}\right)^{\frac{1}{\beta}}\frac{\Gamma(\eta-\frac{1}{\beta})}{\Gamma(\eta-\frac{2}{\beta})}.$$
Since $\eta=-\frac{2{\bf m}}{\beta\sigma^2},$ $(\frac{1}{\beta}-\eta)\frac{\sigma^2\beta}{2}t=({\bf m}+\frac{\sigma^2}{2})t,$ applying \eqref{241127}, we have
$$\lim _{t \rightarrow \infty} c_1(t) \mathbb E \xi_t(\lambda)=\lim _{t \rightarrow \infty} \e^{-\left({\bf m}+\frac{\sigma^2}{2}\right) t}\e^{(\frac{1}{\beta}-\eta)\frac{\beta\sigma^2}{2}t} \mathbb{E}\left[\left(\gamma A_{\frac{\sigma^2\beta^2}{4}t}^{(\frac{2}{\beta}-\eta)}+{\lambda}^{-\beta}\right)^{-\frac{1}{\beta}}\right].
$$
In the strongly subcritical regime, $\frac{{\bf m}}{\sigma^2}<-1,$  $\eta>\frac{2}{\beta}.$ Hence the dominated convergence theorem and Lemma \ref{gam} yield that
{\small$$
\lim _{t \rightarrow \infty}\mathbb{E}\left[\left(\gamma A_{\frac{\sigma^2\beta^2}{4}t}^{(\frac{2}{\beta}-\eta)}+{\lambda}^{-\beta}\right)^{-\frac{1}{\beta}}\right]=\mathbb{E}\lim _{t \rightarrow \infty}\left[\left(\gamma A_{\frac{\sigma^2\beta^2}{4}t}^{(\frac{2}{\beta}-\eta)}+{\lambda}^{-\beta} \right)^{-\frac{1}{\beta}}\right]=\mathbb{E}\left[\left(\frac{\gamma}{2G_{\eta-\frac{2}{\beta}}} +{\lambda}^{-\beta}\right)^{-\frac{1}{\beta}}\right],
$$}
where
$G_{\eta-\frac{2}{\beta}}:=\lim\limits_{t\rightarrow\infty}\left(2A_t^{(\frac{2}{\beta}-\eta)}\right)^{-1}$ is Gamma distributed with shape parameter $\eta-\frac{2}{\beta}$ and scale parameter $1.$
Using the density function of the gamma distribution, we get
{\small\beqnn
\mathbb{E}\left[\left(\frac{\gamma}{2G_{\eta-\frac{2}{\beta}}} +{\lambda}^{-\beta}\right)^{-\frac{1}{\beta}}\right]\ar=\ar\frac1{\Gamma(\eta-\frac{2}{\beta})}
\int_0^\infty
\left(
\frac{\gamma}{2x}+\lambda^{-\beta}
\right)^{-1/\beta}
x^{\eta-\frac{2}{\beta}-1}\e^{-x}\,\d x\cr\cr
\ar=\ar \frac1{\Gamma(\eta-\frac{2}{\beta})}\left(\frac{2}{\gamma}\right)^{1/\beta}
\int_0^\infty
\left(
\frac{1}{x}+\frac{2}{\gamma\lambda^{\beta}}
\right)^{-1/\beta}
x^{\eta-\frac{2}{\beta}-1}\e^{-x}\,\d x\cr\cr
\ar=\ar \frac1{\Gamma(\eta-\frac{2}{\beta})}\left(\frac{2}{\gamma}\right)^{1/\beta}
\int_0^\infty
\left(
1+\frac{2x}{\gamma\lambda^{\beta}}
\right)^{-1/\beta}
x^{\eta-\frac{1}{\beta}-1}\e^{-x}\,\d x\cr\cr
\ar=\ar \frac1{\Gamma(\eta-\frac{2}{\beta})}\left(\frac{2}{\gamma}\right)^{1/\beta}
\int_0^\infty
\left(
1+u
\right)^{-1/\beta}
\left(\frac{\gamma\lambda^\beta}{2}\right)^{\eta-\frac{1}{\beta}} u^{\eta-\frac{1}{\beta}-1}\exp\left(-\frac{\gamma\lambda^\beta}{2}u\right)\,\d u\cr\cr
\ar=\ar \frac{\Gamma(\eta-\frac{1}{\beta})}{\Gamma(\eta-\frac{2}{\beta})}
\left(\frac{\gamma}{2}\right)^{\eta-\frac{2}{\beta}}\lambda^{\beta\eta-1} U\left(\eta-\frac1\beta,\eta-\frac2\beta+1,\frac{\gamma\lambda^\beta}{2}\right).
\eeqnn}
Similarly,
\begin{equation*}
\lim _{t \rightarrow \infty} c_2(t) \mathbb E \zeta_t=\lim _{t \rightarrow \infty}\mathbb{E}\left[\left(\gamma A_{\frac{\sigma^2\beta^2}{4}t}^{(\frac{2}{\beta}-\eta)}\right)^{-\frac{1}{\beta}}\right]=\mathbb{E}\left[\left(\frac{\gamma}{2G_{\eta-\frac{2}{\beta}}}\right)^{-\frac{1}{\beta}}\right]=\left(\frac{2}{\gamma}\right)^{\frac{1}{\beta}}\frac{\Gamma(\eta-\frac{1}{\beta})}{\Gamma(\eta-\frac{2}{\beta})}.
\end{equation*}
On the other hand,
$$ \limsup _{t \rightarrow \infty} c_1(t)\mathbb E \xi^2_t\le\limsup _{t \rightarrow \infty} c_2(t)\mathbb E \zeta^2_t =\limsup _{t \rightarrow \infty}\e^{-\left({\bf m}+\frac{\sigma^2}{2}\right) t}\mathbb E\left[ \left(\gamma A_{\frac{\sigma^2\beta^2}{4}t}^{(\eta)}\right)^{-\frac{2}{\beta}}\right]=0.$$
To see this, using \cite[Lemma 1 (ii)]{pp}, we have
$$\mathbb E\left[\left(A_{\frac{\sigma^2\beta^2}{4}t}^{(\eta)}\right)^{-\frac{2}{\beta}}\right]\le \e^{\left({\bf m}+\frac{\sigma^2}{2}\right) t}\mathbb E\left[\left(A_{\frac{\sigma^2\beta^2}{8}t}^{(-\eta+\frac{2}{\beta})}\right)^{-\frac{1}{\beta}}\right]\mathbb E\left[ \left(A_{\frac{\sigma^2\beta^2}{8}t}^{(\eta-\frac{2}{\beta})}\right)^{-\frac{1}{\beta}}\right].$$
 Note that $A_t^{(\eta-\frac{2}{\beta})}$ goes to $\infty$ as $t$ increases. Moreover, for all \(t\ge \frac{8}{\sigma^2\beta^2}\),
\[
0\le
\left(A_{\frac{\sigma^2\beta^2}{8}t}^{(\eta-\frac{2}{\beta})}\right)^{-1/\beta}
\le
\left(A_1^{(\eta-\frac{2}{\beta})}\right)^{-1/\beta}\le
\exp\left(\frac{2}{\beta}\sup_{0\le s\le 1}|B_s^{(e)}|\right).
\]
Now
\[
\sup_{0\le s\le 1}|B_s^{(e)}|
\le
\sup_{0\le s\le 1}B_s^{(e)}+\sup_{0\le s\le 1}(-B_s^{(e)}).
\]
Hence, by the Cauchy--Schwarz inequality,
\[
\mathbb E\exp\left(\frac{2}{\beta}\sup_{0\le s\le 1}|B_s^{(e)}|\right)
\le
\left(
\mathbb E\exp\left(\frac{4}{\beta}\sup_{0\le s\le 1}B_s^{(e)}\right)
\right)^{1/2}
\left(
\mathbb E\exp\left(\frac{4}{\beta}\sup_{0\le s\le 1}(-B_s^{(e)})\right)
\right)^{1/2}.
\]
By the reflection principle for Brownian motion,
\[
\sup_{0\le s\le 1}B_s^{(e)}\stackrel{d}{=} |B_1^{(e)}|,
\qquad
\sup_{0\le s\le 1}(-B_s^{(e)})\stackrel{d}{=} |B_1^{(e)}|.
\]
Therefore,
\[
\mathbb E\exp\left(\frac{2}{\beta}\sup_{0\le s\le 1}|B_s^{(e)}|\right)
\le
\mathbb E\exp\left(\frac{4}{\beta}|B_1^{(e)}|\right)<\infty.
\]
Thus \(\left(A_1^{(\eta-\frac{2}{\beta})}\right)^{-1/\beta}\) is integrable, and the dominated convergence theorem yields
\[
\lim_{t\to\infty}
\mathbb E\left[
\left(A_{\frac{\sigma^2\beta^2}{8}t}^{(\eta-\frac{2}{\beta})}\right)^{-1/\beta}
\right]
=0.
\]

Therefore,
\beqnn\lim _{t \rightarrow \infty}\frac{\mathbb E^z\left[1-\e^{-\lambda Z_t} \right]}{\mathbb P^z(Z_t>0)}\ar=\ar\lim _{t \rightarrow \infty} \frac{\mathbb E\left[1-\e^{-z \xi_t(\lambda)}\right]}{\mathbb E\left[1-\e^{-z \zeta_t}\right]}=\frac{a_1}{a_2}
\cr\cr
\ar=\ar
\left(\frac{\gamma}{2}\right)^{\eta-\frac{1}{\beta}}\lambda^{\beta\eta-1} U\left(\eta-\frac1\beta,\eta-\frac2\beta+1,\frac{\gamma\lambda^\beta}{2}\right).
\eeqnn
The desired result then follows.
\qed
\end{proof}
\noindent{\it Proof of Theorem \ref{mainthm}.}
Putting the results of Propositions \ref{wkfenzi}, \ref{itmfenzi}, and \ref{strgprop} together yields \eqref{mainthm0327}. By \cite[Theorem 1.2]{li2020}, as $t\to\infty$, $\mbb{P}^z(Z_t\in\cdot\mid Z_t>0)$ converges weakly to a probability measure on $[0, \infty)$, which is exactly the Yaglom limit, and its associated Laplace transform is defined by \eqref{mainthm0327}.
\qed

\end{document}